\title{Some positivity results for toric vector bundles}
\author{Bernt Ivar Utst$\o$l N$\o$dland}
\begin{document} 
\maketitle

\begin{abstract}
We give a criterion for a projectivized toric vector bundle to be a Mori dream space and describe its Cox ring using generators and relations. Both  of these results are in terms of the matroids of all symmetric powers of the bundle. We also give a criterion for a toric vector bundle to be big and describe several interesting examples of toric vector bundles which highlights how positivity properties for toric vector bundles are more complicated than for toric line bundles.
\end{abstract}

\section{Introduction}

Positivity is an important notion in the study of the geometry of projective varieties. For toric varieties we have a very good understanding of various positivity properties of line bundles, in terms of the combinatorics defining the variety. In this paper we investigate various positivity properties of toric vector bundles. Equivalently, we study positivity of line bundles on projectivized toric vector bundles.

In \cite{DJS}, Di Rocco, Jabbusch and Smith associate to a toric vector bundle $\E$ a representable matroid $M(\E)$. To each element $e$ in the ground set of $M(\E)$, there is an associated divisor $D_e$, such that $\oplus_e \OO(D_e)$ surjects onto $\E$, and the induced map  on global sections is surjective.

In this paper we investigate the connection between the matroid and positivity properties of $\E$. Positivity of line bundles is closely related to sections of multiples of the bundle. The corresponding  notion for vector bundles is symmetric powers. An important property of the matroid $M(\E)$ is that it does not necessarily commute with taking symmetric powers when the rank is at least $3$. This makes the study of vector bundles significantly harder than the study of line bundles. To be able to study positivity,  we thus have to study the matroids of all symmetric powers $S^k \E$ at the same time. To do this we  define a set of vectors $\mathfrak{M}(\E)$ containing the ground set of the matroid $M(\E)$, but also containing all matroid vectors in the ground set of some  symmetric power $S^k \E$ which cannot be written as a symmetric product of matroid vectors for lower symmetric powers. The first main result in this paper is the following:

\begin{theorem} [\cref{theorem:criterion}]
Let $\E$ be a toric vector bundle on the smooth toric variety $X_\Sigma$. Then $\Cox(\p(\E))$ is finitely generated if and only if $\mathfrak{M}(\E)$ is finite.
\end{theorem}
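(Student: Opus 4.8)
The plan is to compute the Cox ring explicitly as a graded algebra over $\Cox(X_\Sigma)$ and then match its generators with the vectors in $\mathfrak{M}(\E)$. Writing $\pi\colon \p(\E)\to X_\Sigma$ for the projection, the Picard group of $\p(\E)$ splits as $\pi^*\operatorname{Pic}(X_\Sigma)\oplus\mathbb{Z}\cdot\xi$ with $\xi$ the class of $\OO_{\p(\E)}(1)$, and the projection formula together with $\pi_*\OO_{\p(\E)}(m)=S^m\E$ gives
\[
  H^0\bigl(\p(\E),\,\OO_{\p(\E)}(m)\otimes\pi^*L\bigr)\;=\;H^0\bigl(X_\Sigma,\,S^m\E\otimes L\bigr)
\]
for every $m\ge 0$ and every line bundle $L$ on $X_\Sigma$. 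Summing over $L$ and $m$ identifies $\Cox(\p(\E))$ with $\bigoplus_{m\ge 0}\Gamma_*(S^m\E)$, where $\Gamma_*(\mathcal{F}):=\bigoplus_{L\in\operatorname{Pic}(X_\Sigma)}H^0(X_\Sigma,\mathcal{F}\otimes L)$, as a $\mathbb{Z}_{\ge 0}$-graded algebra whose degree-zero part is the polynomial ring $\Cox(X_\Sigma)$ and whose multiplication in the $m$-grading is induced by the symmetric multiplications $S^m\E\otimes S^n\E\to S^{m+n}\E$. Since $\Cox(X_\Sigma)$ is a finitely generated $k$-algebra, $\Cox(\p(\E))$ is finitely generated over $k$ if and only if it is finitely generated as an algebra over $\Cox(X_\Sigma)$.

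Next I would show that, for each fixed $m$, the matroid of $S^m\E$ produces a $\Cox(X_\Sigma)$-module generating set of $\Gamma_*(S^m\E)$. Applying the construction of \cite{DJS} to the bundle $S^m\E$ yields a surjection $\bigoplus_e \OO(D_e)\to S^m\E$ indexed by the ground set of $M(S^m\E)$. Via Klyachko's description of the global sections of a toric vector bundle and all its twists, together with the maximality built into the divisors $D_e$, the images of the matroid vectors of $S^m\E$ generate $\Gamma_*(S^m\E)$ over $\Cox(X_\Sigma)$. Collecting these over all $m$ gives a $\Cox(X_\Sigma)$-algebra generating set of $\Cox(\p(\E))$ indexed by $\bigsqcup_m M(S^m\E)$.

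The remaining point is to prune this generating set down to $\mathfrak{M}(\E)$. If a matroid vector of $S^m\E$ is a symmetric product of matroid vectors coming from strictly lower symmetric powers, then the section it represents is the corresponding product of lower-degree sections and hence already lies in the subalgebra generated in degrees $<m$; such a vector is therefore redundant as an algebra generator. By definition $\mathfrak{M}(\E)$ consists precisely of the matroid vectors that are \emph{not} of this form, so $\Cox(\p(\E))$ is generated over $\Cox(X_\Sigma)$ by the sections attached to $\mathfrak{M}(\E)$. This already settles the ``if'' direction: when $\mathfrak{M}(\E)$ is finite there are finitely many algebra generators.

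For the converse I would argue contrapositively and show that the elements of $\mathfrak{M}(\E)$ are genuinely necessary generators. If $\Cox(\p(\E))$ were finitely generated, there would be a degree bound $N$ such that for all $m>N$ the piece $\Gamma_*(S^m\E)$ is spanned by products of pieces of lower degree. The heart of the matter, and the step I expect to be the main obstacle, is to convert this algebraic decomposability back into matroid-theoretic decomposability: one must show that if the section attached to a matroid vector of $S^m\E$ lies in the subalgebra generated in lower degrees, then that matroid vector was already a symmetric product of lower matroid vectors, and so did not belong to $\mathfrak{M}(\E)$. This would force $\mathfrak{M}(\E)\subseteq\bigsqcup_{m\le N}M(S^m\E)$ to be finite, contradicting the hypothesis. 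The difficulty is precisely that the matroid does not commute with symmetric powers, so the correspondence between indecomposable matroid vectors and indecomposable sections is not formal; establishing it requires a careful analysis of how the Klyachko filtrations of $S^m\E$, and hence the multiplication maps on global sections, interact with the ground sets of the matroids $M(S^k\E)$ for $k<m$.
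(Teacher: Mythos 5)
Your forward direction is correct and is essentially the paper's own route: the twisted surjections $\bigoplus_e \OO(D_e)\otimes \LL \to S^m\E\otimes\LL$ (surjective on global sections for every line bundle $\LL$, by \cref{lemma:matroidtensorl}), the additivity $D_{v_1\cdots v_s}=D_{v_1}+\cdots+D_{v_s}$, and the compatibility of multiplication maps (\cref{prop:commDiagram}) show exactly that the sections attached to $\mathfrak{M}(\E)$ generate $\Cox(\p(\E))$ as an algebra over $\Cox(X_\Sigma)$; this is \cref{prop:surjRingmap}, and it settles one implication. Your reduction of finite generation over $\C$ to finite generation over $\Cox(X_\Sigma)$, and the observation that finite generation forces every graded piece of $\OO_{\p(\E)}$-degree $m>N$ to be spanned by products of pieces of lower positive degree, are also fine.

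The converse, however, has a genuine gap, which you have located but then formulated incorrectly. The step you need is stated pointwise: ``if the section attached to a matroid vector $e\in M(S^m\E)$ lies in the subalgebra generated in lower degrees, then $e$ is a symmetric product of lower matroid vectors.'' This is false as stated: membership in the subalgebra only places $e$ in the \emph{linear span} of products $v_1v_2$ of elements of lower filtration intersections, and a sum of products need not be a product (in $S^2$ of a three-dimensional space, $xy+z^2$ is not a product of linear forms); since the algorithm of \cite{DJS} involves choices of basis vectors, a chosen matroid vector can perfectly well be such a sum. The correct formulation, and the one the paper proves, lives at the level of the intersections $\cap_\rho (S^m E)^\rho(k_\rho)$ rather than individual vectors. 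One needs two statements: (i) if every such intersection equals the span of products of elements of lower intersections, then the ground set of $M(S^m\E)$ \emph{can be chosen} to consist of products, so degree $m$ contributes nothing to $\mathfrak{M}(\E)$ (\cref{lemma:filtequal}); and (ii) if some intersection $W=\cap_\rho (S^m E)^\rho(k_\rho)$ strictly contains that span, then there exists an actual section not in the image of any multiplication map from lower degrees. Point (ii) is the crux and is where the paper works: twisting by $\LL=\OO(-\sum_\rho k_\rho D_\rho)$ makes $W=H^0(S^m\E\otimes\LL)_0$, and a weight bookkeeping identifies the weight-zero part of the image of all maps $H^0(S^a\E\otimes\LL_1)\otimes H^0(S^b\E\otimes\LL_2)\to H^0(S^m\E\otimes\LL)$ with precisely the span of products, so properness of the span produces a section that no generating set in lower degrees can reach (\cref{lemma:sectionnotmultiple}). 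Without this conversion of a filtration-level failure into an honest indecomposable section, your contradiction argument does not close; and the span-level statements are also what make the finiteness of $\mathfrak{M}(\E)$ independent of the choices in the matroid construction.
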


The second main result is a presentation of the Cox ring of $\p(\E))$ in terms of generators and relations. The generators correspond bijectively to the set $\mathfrak{M}(\E) \cup \Sigma(1)$. We are also able to describe all relations, in terms of  relations between vectors in $\mathfrak{M}(\E)$, see \cref{posSection:coxPresentation} and in particular \cref{theorem:coxpresenetation}.

Our results on Cox rings extends and reproves many of the results in \cite{GHPS}, using different techniques: A key point in the paper is to use the Klyachko filtrations directly.  Our results are also more general. A heuristic explanation for our results on Cox rings is the following: We have that $M(\E \otimes \Li) = M(\E)$, for any line bundle $\Li$, since tensoring with a $1$-dimensional vector space does not change  linear algebra relations. Thus the matroid of $\E$ is most naturally considered an invariant of $\E \otimes \Li$, where $\Li$ is allowed to vary freely. To study the Cox ring we need to study sections of $S^k \E \otimes \Li$, where $k$ and $\Li$ are allowed to vary. However it is sufficient to restrict to algebra generators of the Cox ring over $\Cox(X_\Sigma)$ and these are exactly sections corresponding to the set $\mathfrak{M}(\E)$.

Our third main result is a criterion for a toric vector bundle to be big. 

\begin{theorem} [\cref{theorem:bignessCriteria}]
A toric vector bundle is big if and only if there exists $k>0$ and $v \in M(S^k \E)$, such that the associated polytope is full dimensional.
\end{theorem}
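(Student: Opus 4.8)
The plan is to translate bigness of $\E$ into a growth rate and then read that growth off the Klyachko filtrations. By definition $\E$ is big precisely when $\OO_{\p(\E)}(1)$ is big, and since $\pi\colon\p(\E)\to X_\Sigma$ has relative dimension $r-1$ (write $r=\operatorname{rank}\E$, $n=\dim X_\Sigma$), this is equivalent to $h^0(\p(\E),\OO_{\p(\E)}(m))=h^0(X_\Sigma,S^m\E)$ growing like $m^{n+r-1}$. First I would decompose this into torus weights: writing $E^\rho(\bullet)$ for the Klyachko filtration of $\E$ along $\rho\in\Sigma(1)$ with primitive generator $u_\rho$, one has
\[
h^0(X_\Sigma,S^m\E)=\sum_{u}\dim\bigcap_{\rho\in\Sigma(1)}(S^mE)^\rho(\langle u,u_\rho\rangle),
\]
and the $u$-th summand is exactly the rank in $M(S^m\E)$ of the matroid vectors surviving at weight $u$. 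Thus $h^0(X_\Sigma,S^m\E)$ is a weighted count of lattice points of the polytopes $P_v$ associated to the $v\in M(S^m\E)$, and $v$ has full-dimensional polytope if and only if the associated divisor $D_v$ is big on $X_\Sigma$.

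For the implication ``$\Leftarrow$'' I would argue with positivity of divisors. If $v\in M(S^k\E)$ has full-dimensional $P_v$, then $v$ is a nonzero map $\OO(D_v)\to S^k\E$, hence a section of $\OO_{\p(\E)}(k)\otimes\pi^*\OO(-D_v)$; its divisor of zeros $Z$ is effective, so $\OO_{\p(\E)}(k)\equiv\pi^*D_v+Z$. Since $P_v$ is full-dimensional, $D_v$ is big, so by Kodaira's lemma $D_v\equiv_{\mathbf{Q}}A+E'$ with $A$ an ample $\mathbf{Q}$-divisor on $X_\Sigma$ and $E'\ge 0$; combined with the above this makes $\OO_{\p(\E)}(k)\otimes\pi^*\OO(-A)$ effective. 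Because $\OO_{\p(\E)}(1)$ is $\pi$-ample, the class $\OO_{\p(\E)}(k)\otimes\pi^*(NA)$ is ample on $\p(\E)$ for $N\gg 0$, so $\OO_{\p(\E)}((N+1)k)$ is the sum of an ample and an effective class. Hence $\OO_{\p(\E)}(1)$ is big and $\E$ is big.

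For ``$\Rightarrow$'' I would argue by contraposition: assume that for every $k>0$ every $v\in M(S^k\E)$ has $\dim P_v\le n-1$, and show $h^0(X_\Sigma,S^m\E)=o(m^{n+r-1})$. From the weight decomposition, $h^0(X_\Sigma,S^m\E)\le\sum_{v\in M(S^m\E)}\#(P_v\cap M)$, and each term is $O(m^{n-1})$ since $P_v$ has dimension at most $n-1$ and diameter $O(m)$. It would then suffice to bound the number of matroid vectors of $S^m\E$ by $O(m^{r-1})$, giving $h^0=O(m^{n+r-2})$ and hence $\operatorname{vol}(\OO_{\p(\E)}(1))=0$.

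The \emph{main obstacle} is exactly this last uniform estimate: because the matroid does not commute with symmetric powers, I cannot reduce $M(S^m\E)$ to $M(\E)$ and must instead control the associated polytopes of all symmetric powers simultaneously. I expect the clean way to do this is to invoke \cref{theorem:coxpresenetation}: every matroid vector of $S^m\E$ is, modulo the Cox-ring relations, a symmetric product of the vectors in $\mathfrak{M}(\E)$, so each $P_v$ is a Minkowski sum of the finitely-controlled polytopes attached to $\mathfrak{M}(\E)$. Lower-dimensionality of all $P_v$ then forces these building blocks never to combine to a full-dimensional polytope, and a direct Minkowski-sum lattice-point count (modelled on the split case $\bigoplus_i\OO(D_i)$, where the matroid vectors are the monomials $e^b$ and the polytopes are $\sum_i b_iP_i$) yields growth $O(m^{n+r-2})$. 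Making this reduction precise — in particular handling the case where $\mathfrak{M}(\E)$ is infinite, and checking that passing from the crude count $\#(P_v\cap M)$ to the matroid-rank weighting does not change the leading order — is where I expect the real work to lie.
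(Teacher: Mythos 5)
Your ``$\Leftarrow$'' direction is correct, though it takes a different route from the paper: you use Kodaira's lemma on $X_\Sigma$ plus relative ampleness of $\OO_{\p(\E)}(1)$ to exhibit a multiple of $\OO_{\p(\E)}(1)$ as ample plus effective, whereas the paper works throughout with the criterion of \cite[Example 6.1.23]{Lazarsfeld2} — $\E$ is big iff $H^0(S^{k}\E\otimes A^{-1})\neq 0$ for some $k>0$ and some (every) ample $A$ — and observes that if $P_v$ is full dimensional then $D_v$ is big, so $H^0(\OO(lD_v)\otimes A^{-1})\neq 0$ for some $l$, and the map $\OO(lD_v)\otimes A^{-1}\to S^{kl}\E\otimes A^{-1}$ sending $1\mapsto v^l$ is nonzero, hence injective, on sections. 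Either argument is fine.

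The genuine gap is in your ``$\Rightarrow$'' direction, exactly where you locate it yourself. Your plan needs the estimate $\#M(S^m\E)=O(m^{r-1})$, and neither that bound nor your proposed patch via \cref{theorem:coxpresenetation} is available. First, the patch presupposes a finite set of building blocks, i.e.\ that $\mathfrak{M}(\E)$ is finite; by \cref{theorem:criterion} this is equivalent to $\Cox(\p(\E))$ being finitely generated, which the bigness theorem cannot assume (it must apply precisely also to bundles that are not Mori dream spaces). Second, even when $v=v_1\cdots v_l$ with $v_i\in\mathfrak{M}(\E)$, the polytope $P_v$ is the polytope of the divisor $D_{v_1}+\cdots+D_{v_l}$, which \emph{contains} but is in general strictly larger than the Minkowski sum $P_{v_1}+\cdots+P_{v_l}$; this strict inclusion is not a technicality but the very phenomenon behind \cref{example:bignominkowski}, where $\E$ is big although no Minkowski sum of parliament polytopes is full dimensional. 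So the assertion that ``each $P_v$ is a Minkowski sum of the finitely-controlled polytopes attached to $\mathfrak{M}(\E)$'' is false, and a lattice-point count modelled on the split case can underestimate $h^0$.

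The idea you are missing is that no counting or volume asymptotics is needed at all: run the contrapositive through the same ample-twist criterion as the other direction. If every $v\in M(S^k\E)$, for every $k$, has $P_v$ not full dimensional, then every $D_v$ fails to be big, hence $H^0(\OO(D_v)\otimes A^{-1})=0$ for any ample $A$ (otherwise $D_v$ would be linearly equivalent to an ample plus an effective divisor, hence big). By \cref{lemma:matroidtensorl} the parliament surjection $\bigoplus_{v\in M(S^k\E)}\OO(D_v)\to S^k\E$ remains surjective on global sections after twisting by $A^{-1}$, so $H^0(S^k\E\otimes A^{-1})=0$ for all $k>0$, and $\E$ is not big. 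This replaces your asymptotic estimate — in which the size of $M(S^k\E)$, over which one has no control, enters the leading term — by a vanishing statement for which that size is irrelevant.
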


Additionally we give some other interesting examples and results which are related to positivity of toric vector bundles:
\begin{itemize}
    \item A big toric vector bundle with the property that no Minkowski sum of the polytopes in the parliament is full-dimensional (\cref{example:bignominkowski}).
        \item A way of interpreting the nefness/ampleness of a toric vector bundle in terms of a notion of concavity  for the  piecewise linear support function on the associated branched cover of a fan (\cref{proposition:branchedcovercriterion}).
    \item A toric surface with ample rank two vector bundles $\E_k$ such that $S^k \E_k$ is not globally generated (\cref{counterexample}).
    \item A sequence of toric vector bundles showing that there cannot exist a bound depending on the dimension  of the variety and/or the rank of the vector bundle with the following property: If $\E$ is ample and the degree of $\E|_C$ is larger than this bound for any invariant curve $C$, then $\E$ is globally generated or very ample (\cref{example:noggbound}).

\end{itemize}

{\bf Acknowledgements.}
I am grateful to John Christian Ottem for numerous helpful conversations on the topics in this paper, as well as comments on earlier versions of this paper. I also wish to thank Milena Hering, Nathan Ilten and Ragni Piene for comments on an earlier version of this paper.

\section{Preliminaries on toric varieties} \label{posSection:prelim}

We here recall some preliminary material on toric varieties and toric vector bundles, most of this can be found in \cite{Fulton} and \cite{CLS}.  Let $T= (\C^\ast)^n$ be an algebraic torus and denote by $M$ its character lattice $\Hom(T,\C^\ast)$ and by $N$ its dual lattice of one-parameter subgroups. A toric variety  $X$ will in this paper denote a normal irreducible variety containing $T$ as an open dense subset, such that the action of $T$ on itself extends to an action on $X$. It is well known that any toric variety corresponds to a fan $\Sigma$ in $N_\Q = N \otimes \Q$. We will denote the toric variety associated to $\Sigma$ by $X_\Sigma$. In this paper we assume that the ground field equals $\C$ and that $X_\Sigma$ is smooth and complete. This is because the theory of parliaments of polytopes is only developed under these assumptions. We believe that much of the following will remain true with only minor modifications in the case of any $\Q$-factorial toric variety, in other words for any simplicial fan.

Any divisor $D$ on $X_\Sigma$ is linearly equivalent to a sum $D=\sum_\rho a_\rho D_\rho$ of $T$-invariant divisors. Alternatively it corresponds to a piecewise linear support function $\phi_D : N_\Q \to \Q$ given by 
\[ x \mapsto \langle m_\sigma,x \rangle, \]
where $\sigma$ is any cone containing $x$ and $m_\sigma$ is Cartier data for $D$ on $U_\sigma$. In other words $m_\sigma$ satisfies $a_\rho = \langle m_\sigma,\rho \rangle$ for any ray $\rho$ of $\sigma$.

Associated to a divisor $D$ there is a polytope $P_D$ defined by
\[ P_D = \{ x \in M_\Q | \langle x,\rho \rangle \leq a_\rho \}. \]
We have the following well-known formula for the global sections of $D$ \cite[p. 66]{Fulton}:
\[ H^0(X_\Sigma,\OO(D)) \simeq \bigoplus_{m \in P_D \cap M} \C \chi^m.\] 

\begin{remark} 
In the above we have used the convention that the Cartier data $m_\sigma$ of a divisor $D = \sum a_\rho D_\rho$ satisfies $\langle m_\sigma,\rho \rangle = a_\rho$, while many texts on toric geometry use the convention that it satisfies $\langle m_\sigma,\rho \rangle = -a_\rho$.  This has the consequence that polytopes will be drawn in the opposite direction of what is usual in toric geometry, for instance in \cite{CLS} and \cite{Fulton}.  This is done because we are largely interested in the parliament of polytopes studied in \cite{DJS}, which uses this convention. This has the consequence that some formulas and results will have an extra minus-sign compared to their usual formulations. For a similar reason we will talk about concave support functions, instead of the usual convex ones.
\end{remark}

A toric vector bundle $\E$ is a vector bundle on a toric variety $X_\Sigma$ together with a $T$-action on the total space of the vector bundle, making the bundle projection $\E \to X$ into a $T$-equivariant morphism, such that for any $t \in T, x \in X_\Sigma$ the map $\E_x \to \E_{t \cdot x}$ is linear. 
The study of toric vector bundles goes back to Kaneyama \cite{Kaneyama} and Klyachko \cite{Klyachko}, who both gave classifications of toric vector bundles in terms of combinatorial and linear algebra data. Klyachko applied this to study, among other things, splitting of low rank vector bundles on $\p^n$. We here recollect Klyachko's description:

To a toric vector bundle $\E$ of rank $r$ on $X_\Sigma$, we let $E \simeq \C^r$ denote the fiber at the identity of the torus. Klyachko shows that there  for each ray $\rho \in \Sigma(1)$ is an associated filtration $E^\rho (j)$ of $E$, indexed over $j \in \Z$. It has the property that for any ray $E^\rho(j) = 0$ for $j$ sufficiently large and $E^\rho(j) = E$ for $j$ sufficiently small. Additionally it satisfies a compatibility condition:

For any maximal cone $\sigma$ there exists characters $u_1,...,u_r \in M$ and vectors $L_u \in E$ such that for any ray $\rho$ of $\sigma$ we have
\[ E^\rho(j)  = \sum_{j | \lb u,\rho \rb \geq j} L_u.\]

The above decomposition is equivalent to the fact that on the affine $U_\sigma$, $\E$ splits into a direct sum of line bundles $\OO(u_i)$. Klyachko's classification theorem is the following:

\begin{theorem} [{\cite[Thm 0.1.1]{Klyachko}}]
The category of toric vector bundles on $X$ is equivalent to the category of finite dimensional vector spaces $E$, with filtrations indexed by the rays as described above, satisfying the compatibility condition. A morphism $\E \to \F$ corresponds to a linear map $E \to F$, respecting the filtrations.
\end{theorem}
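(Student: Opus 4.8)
The plan is to exhibit two functors between the two categories and prove they are mutually quasi-inverse. For the forward direction, given a toric vector bundle $\E$ I would first restrict to the open torus $T \subseteq X_\Sigma$. Since $T$ acts simply transitively on itself, $\E|_T$ is equivariantly trivial, and passing to global sections gives an isomorphism $\Gamma(T,\E) \cong E \otimes_\C \C[M]$ of $M$-graded $\C[M]$-modules, where $E = \E_{x_0}$ is the fiber over the identity. For each ray $\rho \in \Sigma(1)$ the module $\Gamma(U_\rho,\E)$ over $\C[\rho^\vee \cap M]$ is $M$-graded and sits inside $\Gamma(T,\E)$; I would set $E^\rho(j)$ to be the subspace of $E$ obtained as its graded piece in any degree $m$ with $\langle m,\rho\rangle = j$. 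The routine checks are that this depends only on $j$ and is decreasing in $j$ --- both immediate from the module structure, since multiplying by a character $\chi^{m'}$ with $m' \in \rho^\vee$ raises the degree while keeping the section inside the module --- and that finiteness of the rank forces $E^\rho(j) = E$ for $j \ll 0$ and $E^\rho(j) = 0$ for $j \gg 0$.

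For the backward direction, given a vector space $E$ with a compatible family of filtrations $\{E^\rho(j)\}$, I would reconstruct $\E$ cone by cone. Over a maximal cone $\sigma$ I define an $M$-graded $\C[\sigma^\vee \cap M]$-module whose degree-$m$ piece is the intersection $\bigcap_{\rho \in \sigma(1)} E^\rho(\langle m,\rho\rangle)$, and then glue these modules over all cones of $\Sigma$. The crucial claim is that each of these modules is free of rank $r = \dim_\C E$, so that the associated coherent sheaf is locally free. This is exactly where the compatibility condition enters: the characters $u_1,\dots,u_r$ and vectors $L_u$ simultaneously splitting all ray-filtrations of $\sigma$ provide an explicit eigenbasis, realizing the module as the direct sum $\bigoplus_i \OO_{U_\sigma}(u_i)$ of line bundles already named in the compatibility condition. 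Finally, I would verify that the two constructions are mutually inverse and that they match morphisms: a $T$-equivariant bundle map $\E \to \F$ restricts on identity fibers to a graded linear map $E \to F$ that automatically preserves the filtrations, and conversely a filtration-respecting linear map induces compatible maps of the intersection modules, hence a bundle map; faithfulness and fullness then follow.

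The main obstacle is precisely the local freeness in the backward direction. For an arbitrary family of decreasing, exhaustive and separating filtrations the intersection modules need not be free, and one obtains only a $T$-equivariant reflexive sheaf rather than a vector bundle. The compatibility condition is the extra hypothesis that upgrades reflexive to locally free, by forcing the filtrations attached to the rays of a common cone to be simultaneously diagonalizable; turning the abstract splitting into an honest free basis over $\C[\sigma^\vee \cap M]$, and checking that these bases glue coherently on the overlaps $U_\sigma \cap U_{\sigma'}$, is the technical heart of the argument. A secondary point that requires care is independence of all choices, so that $\E \mapsto (E,\{E^\rho\})$ is genuinely functorial and the natural transformations comparing the two composite functors with the respective identities are isomorphisms.
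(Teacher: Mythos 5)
The paper itself contains no proof of this statement; it is quoted directly as Klyachko's classification theorem, so your proposal can only be compared with the standard argument (Klyachko's original, re-exposed by Payne), and indeed your outline follows that standard route: trivialize $\E$ equivariantly over the torus, cut the filtrations out of the graded modules $\Gamma(U_\rho,\E)\subset\Gamma(T,\E)\cong E\otimes_\C\C[M]$, and reconstruct the bundle from the intersection modules whose degree-$m$ pieces are $\bigcap_{\rho\in\sigma(1)}E^\rho(\langle m,\rho\rangle)$.

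There is, however, a genuine gap: your forward functor is never shown to land in the target category. You list as ``routine checks'' only that $E^\rho(j)$ is well defined, decreasing, exhaustive and separated, but an object of Klyachko's category must also satisfy the compatibility condition, and you never verify it for the filtrations extracted from a bundle $\E$. This is not routine. As the paper itself remarks, the compatibility condition is equivalent to the assertion that $\E|_{U_\sigma}$ splits $T$-equivariantly as $\bigoplus_{i=1}^r\OO(\ddiv(u_i))$ for every maximal cone $\sigma$, and this equivariant local splitting is precisely the hard half of Klyachko's theorem --- it is the mirror image of the local-freeness claim you correctly flag as the crux of the backward direction, and it does not follow formally from the definition of the $E^\rho(j)$. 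What is missing is an argument of the following kind: for $\sigma$ maximal, $\Gamma(U_\sigma,\E)$ is a finitely generated $M$-graded projective module over $\C[\sigma^\vee\cap M]$; because $\sigma$ is full-dimensional this semigroup ring has a unique maximal graded ideal (the fixed point of $U_\sigma$), so graded Nakayama shows every such graded projective module is graded free; homogeneous generators $s_1,\ldots,s_r$ of degrees $u_1,\ldots,u_r$ then give the splitting, and their values at the identity of $T$ are the vectors $L_{u_i}$ realizing all filtrations $E^\rho(j)$, $\rho\in\sigma(1)$, simultaneously. Without this step you have no functor into the category of compatible filtration data at all, so neither essential surjectivity nor the equivalence can be concluded; with it, the rest of your outline (the backward construction, morphisms, and the observation that arbitrary filtrations only produce reflexive equivariant sheaves) is sound.
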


\section{Parliaments of polytopes} \label{posSection:parliament}

 We next briefly recall the notion of a parliament of polytopes introduced in \cite{DJS}, which is a way of describing global sections of $\E$ in terms of lattice points in a collection of polytopes.

For a toric vector bundle the cohomology groups $H^i(X_\Sigma,\E)$ decompose as a direct sum $\oplus_{u \in M} H^i(X_\Sigma,\E)_u$, over the $\chi^u$-isotypical components. Klyachko showed that 
\[ H^0(X_\Sigma,\E)_u = \cap_{\rho \in \Sigma(1)} E^\rho(\langle u,\rho \rangle ). \]
The notion of parliaments of polytopes gives a more detailed way of studying $H^0(X_\Sigma,\E)$.

Consider the set of all intersections of the form $\cap_\rho E^{\rho}(j_\rho)$. There is a unique representable matroid $M(\E)$ associated to $\E$, whose ground set is constructed inductively as follows: For any intersection of dimension one, add a vector from this intersection to the ground set. Assume that we have added vectors corresponding to all intersections of dimension $i$ and let $V$ be an intersection of dimension $i+1$. Let $G$ be the set of all such vectors contained in $V$. Let $W$ be a complementary subspace to $\Span(G)$ inside $V$. We choose a basis for $W$ and add all the basis vectors to the ground set of the matroid. Performing this process on all possible such intersections, we get a set of vectors whose associated matroid $M(\E)$ is uniquely determined by $\E$. We will often, by abuse of notation, identify the matroid with a fixed choice for its ground set.

By construction $M(\E)$ has the property that each of the intersections $\cap_\rho^n E^{\rho}(j_\rho)$ can be written as the span of vectors from a ground set of the matroid. To each vector $e \in M(\E)$  we associate a divisor $D_e = \sum_\rho a_\rho D_\rho$, where 
\[ a_\rho = \mathrm{max} \{j \in \Z: e \in E^{\rho}(j)\}. \]
We denote by $P_e$ the associated polytope:
\[P_e = \{ u \in M_\R | \lb u,\rho \rb \leq \mathrm{max} (j \in \Z: e \in E^{\rho}(j)), \text{ for all } \rho \}. \]

\begin{proposition} [{\cite[Proposition 1.1]{DJS}}]
The lattice points in the parliament of polytopes correspond to a torus equivariant generating set of $H^0(X,\E)$.
\end{proposition}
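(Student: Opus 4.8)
The plan is to reduce the statement, component by component, to a linear-algebra fact about the filtrations $E^\rho$ that is built directly into the construction of $M(\E)$. Recall Klyachko's decomposition $H^0(X_\Sigma,\E) = \bigoplus_{u \in M} H^0(X_\Sigma,\E)_u$ together with the formula $H^0(X_\Sigma,\E)_u = \bigcap_{\rho \in \Sigma(1)} E^\rho(\langle u,\rho\rangle)$, where each isotypical piece is regarded as a subspace of the fiber $E$. A torus equivariant generating set is then just a collection of weight vectors whose weight-$u$ members span each $H^0(X_\Sigma,\E)_u$, so I would aim to produce, for every $u$, a spanning set of that piece indexed by lattice points of the parliament.

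First I would translate the incidence $u \in P_e$ into filtration language. A lattice point $u$ lies in $P_e$ precisely when $\langle u,\rho\rangle \le \max\{j : e \in E^\rho(j)\}$ for every ray $\rho$; since each filtration is decreasing this is equivalent to $e \in E^\rho(\langle u,\rho\rangle)$ for all $\rho$, i.e. to $e \in \bigcap_\rho E^\rho(\langle u,\rho\rangle) = H^0(X_\Sigma,\E)_u$. Thus the matroid vectors whose polytope contains $u$ are exactly the ground-set vectors lying in the intersection $\bigcap_\rho E^\rho(\langle u,\rho\rangle)$, and each incidence $(e,u)$ names a genuine weight-$u$ section, namely the image of $e$ under the inclusion $H^0(X_\Sigma,\E)_u \hookrightarrow E$.

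With this dictionary the proposition becomes the defining property of $M(\E)$ recalled above: every intersection $\bigcap_\rho E^\rho(j_\rho)$ is spanned by the ground-set vectors it contains. I would apply this with $j_\rho = \langle u,\rho\rangle$ to conclude that $\{e \in M(\E) : u \in P_e\}$ spans $H^0(X_\Sigma,\E)_u$, then range over all $u \in M$ and assemble the direct sum to obtain a generating set of $H^0(X_\Sigma,\E)$. It is torus equivariant because every listed section is by construction a weight vector of weight $u$; and it is finite, since completeness of $X_\Sigma$ makes each $P_e$ a bounded polytope with finitely many lattice points.

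I expect the only real content, beyond this bookkeeping, to be the spanning property of the matroid, which is exactly what the inductive construction of the ground set delivers: at each stage one adjoins a basis of a complement to the span of the previously chosen vectors inside an intersection of dimension $i+1$, so that by induction on dimension every such intersection is spanned by the ground-set vectors it contains. Consequently the main thing to watch is not an estimate but a convention --- ensuring the inequality cutting out $P_e$ is read off the filtration in the correct direction, so that $u \in P_e$ really does match $e \in \bigcap_\rho E^\rho(\langle u,\rho\rangle)$.
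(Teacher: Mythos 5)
Your proposal is correct and follows essentially the same route as the paper (which defers to the proof in \cite[Proposition 1.1]{DJS}): the whole content is the dictionary $u \in P_e \Leftrightarrow e \in \bigcap_\rho E^\rho(\langle u,\rho\rangle) = H^0(X_\Sigma,\E)_u$, combined with the defining spanning property of the ground set of $M(\E)$ applied to the intersection with $j_\rho = \langle u,\rho\rangle$. This equivalence is exactly the one the paper records and reuses in its proof of the dimension formula for $H^0(X_\Sigma,\E)_u$, so there is nothing to add.
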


A crucial difference from the case of line bundles on toric varieties is that the sections associated to all the lattice points aren't necessarily linearly independent. If the polytopes are disjoint then then they actually give a basis, but if there is any overlap there might be relations among them. The matroid structure of the indexing set describes precisely the dimension of a $\chi^u$-isotypical component.
\begin {proposition} \label{proposition:dimglobalchru}
Given a toric vector bundle $\E$ and a character $u \in M$ we have
\[ \dim H^0(X_\Sigma,\E)_u = \dim \mathrm{Span} \{e \in M(\E) | u \in P_e \}. \]
\end{proposition}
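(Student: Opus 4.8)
The plan is to compute both sides of the claimed equality directly from Klyachko's formula for the isotypical component, using the defining property of the matroid $M(\E)$. I would start from the identity
\[ H^0(X_\Sigma,\E)_u = \bigcap_{\rho \in \Sigma(1)} E^\rho(\langle u,\rho \rangle), \]
quoted earlier from Klyachko, so that the left-hand dimension is the dimension of a particular intersection of the form $\cap_\rho E^\rho(j_\rho)$ with $j_\rho = \langle u,\rho\rangle$. The strategy is therefore to show that the right-hand side, $\dim \mathrm{Span}\{e \in M(\E) \mid u \in P_e\}$, computes the dimension of exactly this intersection.

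The key step is to unwind the condition $u \in P_e$ in terms of the filtrations. By the definition of $P_e$, the point $u$ lies in $P_e$ precisely when $\langle u,\rho\rangle \leq \max\{j : e \in E^\rho(j)\}$ for every ray $\rho$, which by the nesting of the filtration is equivalent to $e \in E^\rho(\langle u,\rho\rangle)$ for every $\rho$, i.e. to $e \in \cap_\rho E^\rho(\langle u,\rho\rangle)$. Hence $\{e \in M(\E) \mid u \in P_e\}$ is exactly the set of ground-set vectors contained in the intersection $V := \cap_\rho E^\rho(\langle u,\rho\rangle)$, and the right-hand side is $\dim \mathrm{Span}(M(\E) \cap V)$. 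So the proposition reduces to the claim that $\dim \mathrm{Span}(M(\E) \cap V) = \dim V$ for every intersection $V$ of this form.

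I would then invoke the construction of the matroid to close the gap. By construction, $M(\E)$ was built inductively so that each intersection $\cap_\rho E^\rho(j_\rho)$ is the span of the ground-set vectors it contains; this is precisely the property stated immediately after the matroid's definition in the excerpt. Applying this to $V$ gives $\mathrm{Span}(M(\E) \cap V) = V$, and taking dimensions yields the desired equality. To make the argument airtight I would verify by induction on $\dim V$ that the vectors added at each stage, together with those inherited from lower-dimensional intersections, indeed span $V$: at the step where $V$ has dimension $i+1$, one chooses a complement $W$ to the span $\Span(G)$ of the already-added vectors $G \subseteq V$ and adjoins a basis of $W$, so $\Span(G) + W = V$ by the complementarity, completing the span.

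The main obstacle is essentially bookkeeping rather than a deep difficulty: I must be careful that the set $\{e \in M(\E) \mid e \in V\}$ really coincides with the vectors used in the inductive construction of $V$'s span, as opposed to a strictly larger collection of ground-set vectors that happen to lie in $V$ (which would still span $V$, hence cause no harm) or a strictly smaller one (which would break the argument). The cleanest way to handle this is to rely directly on the stated spanning property of $M(\E)$ rather than re-deriving it, and to treat the translation between the polytope condition $u \in P_e$ and the filtration condition $e \in E^\rho(\langle u,\rho\rangle)$ carefully, since that equivalence rests on $\max\{j : e \in E^\rho(j)\}$ being a genuine maximum and on the monotonicity $E^\rho(j) \supseteq E^\rho(j')$ for $j \leq j'$.
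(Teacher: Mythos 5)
Your proposal is correct and follows essentially the same route as the paper: the paper's proof consists precisely of the equivalence $e \in H^0(X_\Sigma,\E)_u = \cap_\rho E^\rho(\langle u,\rho\rangle) \Leftrightarrow u \in P_e \cap M$ (cited from the proof of Proposition 1.1 in \cite{DJS}), combined implicitly with the spanning property of the matroid construction, which are exactly your two steps. Your write-up just makes explicit the translation via monotonicity of the filtrations and the fact that $\mathrm{Span}(M(\E) \cap V) = V$ for intersections $V$, details the paper leaves to the reader.
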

\begin{proof}
This follows by the equivalence from \cite[proof of Proposition 1.1]{DJS}:
\[ e \in H^0(X_\Sigma,\E)_u= \cap E^\rho(\lb u,\rho \rb) \Leftrightarrow u \in P_e \cap M. \]
\end{proof}

An equivalent way of formulating the above statements on global sections is to observe that by construction there is a surjection
\[ \F=\bigoplus_{e \in M(\E)} \OO(D_e) \to \E  \to 0, \]
which is surjective on global sections \cite[Remark 3.6]{DJS}. Indeed, due to Klyachko's equivalence of categories such a map corresponds to a surjective map of vector spaces $\psi: F \to E$. The vector space $F$ has a basis consisting of one basis vector $w_e$ for each $e \in M(\E)$. The map $\psi$ is simply the map sending $w_e$ to $e$. By construction it will be surjective on global sections.

We think the following observation will be useful for studying parliaments of polytopes: Observe that the above surjection corresponds to a closed embedding
\[ i:\p(\E) \to  \p(\F) \]
such that $i^\ast \OOF = \OOO$. $\p(\F)$ is itself a toric variety $X_{\Sigma'}$ whose fan lives in $N_\Q' = N_\Q \oplus N_Q''$, for a lattice $N''$ of rank equal to the number of matroid vectors minus one. Thus $\OOF$ defines a polytope $P_{\OOF}$ in $M_\Q'$ whose lattice points corresponds to elements of $H^0(X_{\Sigma'},\OOF) = H^0(X_\Sigma,\F)$. Moreover the parliament for $\F$ also corresponds to global sections of $\F$.

We recall the construction of the fan $\Sigma'$ in $N'_\Q = N_\Q \oplus N_\Q''$. Let $D_0,\ldots,D_s$ be the divisors in the parliament of $\E$ and write 
\[ D_i = \sum_\rho a_{i \rho} D_\rho \]
There are rays $w_0,\ldots,w_s$ in $N_\Q''$ having the structure of the fan of $\p^s$, exhibiting the $\p^s$-bundle structure of $\p(\F)$. For any ray $\rho \in \Sigma(1)$ we have an associated ray $\rho'$ of $\Sigma'$ with minimal generator
\[ \rho'= \rho + \sum_i a_{i \rho} w_i. \]
A cone of $\Sigma'$ is the Minkowski sum of any cone of $\Sigma$ plus any cone generated by  a proper subset of $\{w_0,\ldots,w_s \}$.

\begin{lemma} \label{lemma:bigpoly}
For fixed $i$, the rational equivalence class of the divisor  
\[  D_{w_i} + \sum_\rho a_{i \rho} D_{\rho'},\]
equal that of $\OOF$. Fixing any one such representation, the polytopes $P_e$ are obtained as the intersections of $P_{\OOF}$ with the fibers over the $s+1$ lattice points of the standard simplex $\Delta_s$ in $M_\Q''$, along the map $M_\Q \to M_\Q''$.
\end{lemma}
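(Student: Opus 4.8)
The plan is to work throughout in $M'_\Q = M_\Q \oplus M''_\Q$, with the projection $\pi \colon M'_\Q \to M''_\Q$ whose fibers are the copies of $M_\Q$ referred to in the statement, and to compute $P_{\OOF}$ directly from the fan $\Sigma'$ described above before carrying out the slicing. First I would establish the linear equivalences. Two $T'$-invariant divisors on $X_{\Sigma'}$ are linearly equivalent precisely when their difference is $\operatorname{div}(\chi^m)$ for some $m=(m_1,m_2)\in M_\Q\oplus M''_\Q$, and $\operatorname{div}(\chi^{(m_1,m_2)}) = \sum_\rho \langle m,\rho'\rangle D_{\rho'} + \sum_k \langle m_2,w_k\rangle D_{w_k}$ with $\langle m,\rho'\rangle = \langle m_1,\rho\rangle + \sum_k a_{k\rho}\langle m_2,w_k\rangle$. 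Since the $w_k$ carry the fan of $\p^s$ and hence satisfy the single relation $\sum_k w_k = 0$, for any pair $i,i'$ there is a unique $m''_{i,i'}\in M''$ with $\langle m''_{i,i'},w_i\rangle = 1$, $\langle m''_{i,i'},w_{i'}\rangle = -1$, and $\langle m''_{i,i'},w_k\rangle = 0$ otherwise. Taking $m=(0,m''_{i,i'})$ shows $D_{w_i}+\sum_\rho a_{i\rho}D_{\rho'} \sim D_{w_{i'}}+\sum_\rho a_{i'\rho}D_{\rho'}$, so all the divisors $\mathcal D_i := D_{w_i}+\sum_\rho a_{i\rho}D_{\rho'}$ lie in one class, and replacing one representative by another only translates the associated polytope by $(0,m''_{i,i'})$, which justifies the phrase ``fixing any one such representation''. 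That this common class is $\OOF$ follows from the standard description of the relative $\OO(1)$ of a split toric projective bundle; I would confirm the precise normalization at the end via a lattice-point count.

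Next I would compute the polytope explicitly. With coefficients $b_{\rho'} = a_{i\rho}$, $b_{w_i} = 1$ and $b_{w_k} = 0$ for $k\neq i$, the defining inequalities of $P_{\mathcal D_i}$ split into a ``vertical'' part $\langle u_1,\rho\rangle + \sum_k a_{k\rho}\langle u_2,w_k\rangle \le a_{i\rho}$ and a ``horizontal'' part $\langle u_2,w_i\rangle \le 1$, $\langle u_2,w_k\rangle \le 0$ $(k\neq i)$, where $u=(u_1,u_2)$. The horizontal inequalities cut out exactly $\pi(P_{\mathcal D_i}) = \Delta_s$, the polytope of $\OO_{\p^s}(1)$, whose $s+1$ lattice points $p_0,\ldots,p_s$ satisfy $\langle p_e,w_k\rangle = \delta_{ik}-\delta_{ek}$ (note these values sum to zero over $k$, consistent with $\sum_k w_k=0$). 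Fixing such a point $p_e$ and restricting to the fiber $\pi^{-1}(p_e)$, the vertical inequalities become $\langle u_1,\rho\rangle \le a_{i\rho} - \sum_k a_{k\rho}\langle p_e,w_k\rangle$, and the identity $\sum_k a_{k\rho}\langle p_e,w_k\rangle = a_{i\rho}-a_{e\rho}$ collapses the right-hand side to $a_{e\rho}$. Hence the slice is precisely $P_e$, which is the assertion.

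The main obstacle I anticipate is not the slicing, which is essentially bookkeeping, but pinning down that the common class of the $\mathcal D_i$ is $\OOF$ on the nose, rather than a twist or its dual: this is exactly where the Grothendieck-versus-classical convention for $\p(\F)$ and the paper's nonstandard sign convention for polytopes interact, and a misplaced sign would replace every $a_{e\rho}$ by $-a_{e\rho}$. I would resolve this cleanly by treating the equality $H^0(X_{\Sigma'},\OOF) = H^0(X_\Sigma,\F)$ as the fixed reference point: the slicing already decomposes $P_{\mathcal D_i}\cap M'$ as the disjoint union of the $P_e\cap M$, which by the description of global sections via lattice points in the parliament is exactly the direct-sum decomposition of $H^0(X_\Sigma,\F) = \bigoplus_e H^0(X_\Sigma,\OO(D_e))$. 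Consistency of the two computations then forces the correct normalization and simultaneously confirms the identification $\mathcal D_i \sim \OOF$.
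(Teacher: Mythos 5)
Your main computation is correct and is essentially the paper's own slicing argument: the identity $\sum_k a_{k\rho}\langle p_e,w_k\rangle = a_{i\rho}-a_{e\rho}$, which collapses the fiber over $p_e$ to $P_e$, is exactly the cancellation the paper performs (the paper writes it out only for $i=0$; your bookkeeping for general $i$ is fine). Your character argument showing that all the divisors $\mathcal{D}_i = D_{w_i}+\sum_\rho a_{i\rho}D_{\rho'}$ lie in a single class is also correct, and is more explicit than what the paper does on that point. One harmless overstatement: $\pi(P_{\mathcal{D}_i})=\Delta_s$ need not hold, since the slice over a lattice point can be empty (some $P_e$ may be empty); only the containment $\pi(P_{\mathcal{D}_i})\subset\Delta_s$ is automatic, but nothing in your argument uses the equality.

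The genuine gap is the step you yourself flagged as the main obstacle: pinning the common class to $\OOF$. You make the lattice-point consistency check load-bearing (``consistency of the two computations then forces the correct normalization''), and that mechanism cannot do the job. From restriction to a fiber you know $[\mathcal{D}_i]=[\OOF]+\pi^\ast L$ for some $L\in\Pic(X_\Sigma)$, and what your slicing plus the reference equality $H^0(X_{\Sigma'},\OOF)=H^0(X_\Sigma,\F)$ yields is the numerical statement $\sum_e \#\left(P_{D_e}\cap M\right) = \sum_e \#\left(P_{D_e+L}\cap M\right)$; this does not force $L=0$. Concretely, for $\F=\OO(H_1)\oplus\OO(H_2-H_1)$ on $\p^1\times\p^1$ (the parliament of \cref{example:bignominkowski}) and $L=\OO(H_2-H_1)$, both sums equal $2$, so the count is blind to this twist. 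The equivariant refinement of the count is circular: the character decomposition of $H^0(X_{\Sigma'},\OOF)$ depends on a choice of invariant divisor representing $\OOF$, which is precisely what is being determined. The repair is easy and standard: compose the tautological surjection $\pi^\ast\F\to\OOF$ with the inclusion of the $i$-th summand $\pi^\ast\OO(D_i)\to\pi^\ast\F$; this gives a nonzero invariant section of $\OOF\otimes\pi^\ast\OO(-D_i)$ whose zero divisor, computed chart by chart (over $\pi^{-1}(T)$ it is the $i$-th homogeneous coordinate), is exactly $D_{w_i}$, whence $\OOF\sim D_{w_i}+\pi^\ast D_i$ on the nose and with the correct quotient-convention sign. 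Alternatively, argue as the paper does, via $\p(\F\otimes\OO(-D_i))$ and effectivity of $D_{w_i}$. With that one step replaced, your proof is complete.
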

\begin{proof}
Set $\G = \F \otimes \OO(-D_{i})$.  Both $X_\Sigma$ and $\p(\G)$ are toric varieties, thus their Picard groups are easily computable from the combinatorics of the fans. Since $\p(\G)$ is a projective bundle over $X_\Sigma$, we also know the relationship between these Picard groups. Comparing the two ways of computing these groups, we see that the class of $D_{w_i}$ has to equal the class of $\OO_{\p(\G)}(1)$, or its negative. But $D_{w_i}$ is effective, thus it has to be the positive $\OO_{\p(\G)}(1)$.  But this implies that on $\p(\F)$ the class of $\OOF$ equals that of $D_{w_i} + D_i$.

We now fix a representation of $\OOF$, say $\OOF = D_{w_0} + \sum_\rho a_{0 \rho} D_{\rho'}$. The polytope $P_{\OOF}$ is given by the inequalities, for some $(x,y) \in N_\Q \otimes N_\Q''$
\[ x_1 \leq 0 \]
\[  \vdots  \]
\[ x_s \leq 0 \]
\[ -x_1-\cdots-x_s \leq 1 \]
\[ \langle (x,y),\rho' \rangle \leq a_{0 \rho} \]
Let $F_0$ be the subset of $P_{\OOF}$ where $x_1=\ldots=x_s=0$. Then we see that the above inequalities reduce to
\[ \langle y,\rho \rangle \leq a_{0 \rho} \]
Thus $F_0$ is exactly $P_{D_0} \times (0,\ldots,0) \subset N_\Q \otimes N_\Q''$. Similarly, if $F_i$ is the locus with $x_i=-1$ and $x_j=0$ for $j \neq i$, then $F_j$ is the polytope given by
\[ \langle y,\rho \rangle + \langle (0,\ldots,-1,\ldots,0), \sum_j a_{j \rho} w_j \rangle \leq a_{0 \rho} \]
which after cancelling $a_{0 \rho}$ is given by
\[ \langle y,\rho \rangle \leq a_{i \rho} \]
Thus $F_i$ is $P_{D_{w_i}}$ times a point.
\end{proof}

\begin{corollary}
The parliament of polytopes of $\E$ is obtained by projecting the $s+1$ polytopes $M \times v_i \cap P_{\OOF}$ (which could be empty), where the $v_i$ are the vertices of the standard $s$-simplex $\Delta_s$ in $M_\Q''$. Moreover the lattice points in the parliament of polytopes are the images of all lattice points of $P_{\OOF}$. 
\end{corollary}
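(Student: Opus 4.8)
The plan is to read both assertions off \cref{lemma:bigpoly}. That lemma, after fixing a representation of $\OOF$, identifies the fiber $F_i = (M_\Q \times \{v_i\}) \cap P_{\OOF}$ over each vertex $v_i$ of $\Delta_s$ with the parliament polytope $P_{D_i}$ sitting over $v_i$. For the first assertion I would simply note that applying the forgetful projection $\pi\colon M_\Q' = M_\Q \oplus M_\Q'' \to M_\Q$ to $F_i = (M \times v_i) \cap P_{\OOF}$ recovers $P_{D_i}$, and that as $i$ ranges over $0,\dots,s$ these are exactly the polytopes of the parliament of $\E$. So the first statement is immediate once one records that the $v_i$ are precisely the lattice points (equivalently, vertices) of $\Delta_s$.

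For the second assertion the content is that \emph{every} lattice point of $P_{\OOF}$ already lies on one of the fibers $F_i$. Here I would use the explicit inequalities from the proof of \cref{lemma:bigpoly}: the $M_\Q''$-coordinates of $P_{\OOF}$ are constrained only by $x_1\leq 0,\dots,x_s\leq 0$ and $-x_1-\cdots-x_s\leq 1$, so the image of $P_{\OOF}$ under $M_\Q'\to M_\Q''$ is contained in a unimodular copy of the standard simplex $\Delta_s$. A lattice point of $P_{\OOF}$ has $M''$-component a lattice point of this simplex, and the only such lattice points are the vertices $v_0,\dots,v_s$; hence each lattice point of $P_{\OOF}$ must lie in some $F_i$.

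Combining this with \cref{lemma:bigpoly}, which gives $F_i \cap M' = (P_{D_i}\cap M)\times\{v_i\}$, the lattice points of $P_{\OOF}$ decompose as the disjoint union $\bigsqcup_i (P_{D_i}\cap M)\times\{v_i\}$. Projecting by $\pi$ sends this onto $\bigcup_i (P_{D_i}\cap M)$, which is by definition the lattice-point set of the parliament; conversely each parliament lattice point $m\in P_{D_i}\cap M$ is $\pi(m,v_i)$ with $(m,v_i)\in P_{\OOF}\cap M'$. Under the identification of the lattice points of $P_{\OOF}$ with a basis of $H^0(X_{\Sigma'},\OOF)=H^0(X_\Sigma,\F)=\bigoplus_e H^0(X_\Sigma,\OO(D_e))$, this is exactly the statement that the parliament lattice points are the images of the generators of $H^0(X_\Sigma,\F)$. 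I expect the only real obstacle to be the verification that $\Delta_s$ carries no lattice points other than its vertices, i.e. the standardness of the simplex coming from the $\p^s$-bundle structure of $\p(\F)$; everything else is bookkeeping with the identifications already established in \cref{lemma:bigpoly}.
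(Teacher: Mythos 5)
Your proposal is correct and follows essentially the same route as the paper: both read the identification of the fibers $F_i$ with the parliament polytopes off \cref{lemma:bigpoly}, and both use the defining inequalities of $P_{\OOF}$ to see that the $M''$-component of any lattice point lies in the standard simplex, whose only $s+1$ lattice points are its vertices, so every lattice point of $P_{\OOF}$ sits on some fiber $F_i$. The fact you flag as the "only real obstacle" (that $\Delta_s$ has no lattice points besides its vertices) is exactly the point the paper also invokes, and it is immediate from the inequalities.
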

\begin{proof}
Most of this is clear from the above lemma. Note that the lattice points in $P_{\OOF}$ are all $m' \in M'$ such that $H^0(X_\Sigma',\OOF)_{m'}$ is non-zero. By the inequalities defining the polytope we see that for $m'=(m,m'')$ we must have that $m''$ lies in the standard simplex in $M_\Q''$. But this has only $s+1$ lattice points (corresponding exactly to the polytopes  $F_i$ defined in the proof of \cref{lemma:bigpoly}), hence any corresponding $m$ lies in some face $F_i$.
\end{proof}

\begin{corollary}
The global sections of $\E$ are obtained by projecting $s+1$ polytopes along $p: M_\Q' \to M_\Q$ and then identifying sections according to the dependence structure of the matroid $M(\E)$.
\end{corollary}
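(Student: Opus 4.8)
The plan is to upgrade the combinatorial statement of the preceding corollary to a statement about vector spaces of sections, by combining it with the surjectivity of $\psi$ on global sections and the dimension formula of \cref{proposition:dimglobalchru}. First I would recall that $\OOF$ is a line bundle on the toric variety $X_{\Sigma'}$, so $H^0(X_{\Sigma'},\OOF)$ has the monomial basis indexed by $P_{\OOF} \cap M'$; in particular distinct lattice points of $P_{\OOF}$ give linearly independent sections. Via the identification $H^0(X_{\Sigma'},\OOF) = H^0(X_\Sigma,\F)$ and the previous corollary, each lattice point of $P_{\OOF}$ lies over a vertex $v_i$ of $\Delta_s$ and projects under $p$ to a lattice point $u$ of the polytope $P_{e_i}$ attached to the matroid vector $e_i$; the corresponding basis section lives in the summand $\OO(D_{e_i})$ of $\F$ in isotypical degree $u$. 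Thus projecting the $s+1$ fiber polytopes $F_i$ of \cref{lemma:bigpoly} along $p$ produces exactly the torus-equivariant generating set of $H^0(X_\Sigma,\F)$, organized by degree.

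Next I would exploit that the map $\psi : F \to E$, $w_e \mapsto e$, is $T$-equivariant and surjective on global sections, so it decomposes as a direct sum over the characters $u \in M$. Fixing $u$, the component $H^0(X_\Sigma,\F)_u$ has as basis the projected sections indexed by $\{ e \in M(\E) : u \in P_e \}$, and by construction $\psi$ sends the section indexed by $e$ to the section of $\E$ determined by the vector $e \in E$. Hence, after projecting, the induced map on degree-$u$ sections is precisely the linear map $\bigoplus_{e : u \in P_e} \C \to E$ that sends the basis vector $w_e$ to $e$. Its image is $\Span\{ e \in M(\E) : u \in P_e \}$ and its kernel is exactly the space of linear relations among these ground-set vectors, i.e.\ the dependence structure recorded by the matroid $M(\E)$.

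Assembling the pieces, surjectivity of $\psi$ on global sections identifies $H^0(X_\Sigma,\E)_u$ with the quotient of the projected degree-$u$ sections by the matroid relations; this simultaneously recovers $\dim H^0(X_\Sigma,\E)_u = \dim \Span\{ e : u \in P_e \}$ from \cref{proposition:dimglobalchru} and yields the desired description, and summing over $u \in M$ gives the statement for $H^0(X_\Sigma,\E)$. The main difficulty I anticipate is bookkeeping rather than conceptual: one must check that the projection $p$ respects the isotypical grading, so that identifications occur only among sections of a common degree $u$, and that the kernel of $\psi$ in each degree is precisely the matroid dependence that the construction of $M(\E)$ was designed to encode. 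The delicate locus is where the projected polytopes overlap, for it is exactly there that nontrivial identifications appear; away from the overlaps the projected lattice points descend to a genuine basis, matching the earlier observation that disjoint polytopes yield a basis.
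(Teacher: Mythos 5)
Your proof is correct and matches the argument the paper leaves implicit: the corollary is stated without proof precisely because it follows, as you show, from the preceding corollary together with the surjectivity on global sections of $\psi\colon F \to E$ (sending $w_e \mapsto e$) and the dimension formula of \cref{proposition:dimglobalchru}, applied character by character. Your explicit identification of the degree-$u$ kernel with the linear relations among $\{e : u \in P_e\}$ is exactly the ``identification according to the dependence structure of $M(\E)$'' that the statement asserts.
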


\begin{remark}
When all polytopes in the parliament are non-empty, we have that the big polytope $P_{\OOF}$ is the Cayley polytope of the polytopes in the parliament. In that case the above statements follow from well-known results on Cayley polytopes. In that case we also have that under the projection $q: M_\Q' \to M_\Q''$ $P_{\OOF}$ is mapped to the standard simplex.  See for instance \cite[Section 2]{CayleyP} for details on Cayley polytopes.
\end{remark}

\begin{remark}
Fix $\E$, and define the polytope $Q$ as the convex hull of all polytopes in the parliament. If $\E$ is ample, then the fiber of $p$ of any point in the interior of $Q$ intersected with $P_{\OOF}$ is non-empty, although it need not contain lattice points. Studying the size of the fiber, using a similar construction for complexity one $T$-varieties, under taking multiples of a line bundle $\Li$, is the idea used by Altmann and Ilten to prove Fujita's freeness conjecture for complexity one $T$-varieties \cite{AI}. In  particular it also is true for rank two toric vector bundles. Thus, for any ample line bundle $\Li$ on $X=\p(\E)$, where $\E$ has rank $2$, we have that $(\dim X +1) \Li + K_X$ is basepoint free. One could hope that it would be possible to use this construction to study also higher rank bundles, although the fact that the matroid of the symmetric power  $S^k \E$ does not equal the symmetric power of the ground set of $M(\E)$ makes it significantly harder. An example of a bundle for which the  matroid of the symmetric power does not equal the symmetric power of the matroid can be seen in \cref{example:big}.
\end{remark}

\begin{remark}
Di Rocco, Jabbusch and Smith ask \cite[p.3]{DJS} whether there is, for a globally generated toric vector bundle $\E$, a relation between regular triangulations of the parliaments of $\E$ and the equations of $\p(\E)$ under the embedding by the linear system $\OOO$. The above makes this plausible: The linear system $\OOO$ gives a rational map $\p(\F) \dashrightarrow \p^N = \p(H^0(X_{\Sigma'},\F))$. The surjection $H^0(X_\Sigma,\F) \to H^0(X_\Sigma,\E)$ implies that $\p(H^0(X_\Sigma,\E))$ is a linear subspace of $\p^N$. Then a regular triangulation of $P_{\OOF}$ induces a regular triangulation of the parliament of $\E$, under the projection $p$.
\end{remark}

\iffalse
\begin{proposition}
If $\E$ is an ample toric vector bundle of rank two then $\OO_{\p(\E)}(k) \otimes \det \E$ is globally generated for any $k \geq 0$.
\end{proposition}

\begin{proof}

\end{proof}

\begin{proposition}
NOTE TO SELF: Intersection numbers on $X_\Sigma'$ is given by $\OOF \cdot C' = 1$ when $C'$ is curve on $\p^s$ and $\OOF \cdot C' = D_i \cdot C$ when $C'$ corresponds to a  curve $C$ on $X_\Sigma$ and divisor $D_i$.
\end{proposition}

\fi

\section{Cox rings of projectivized toric vector bundles} \label{posSection:coxRings}

Let $X_\Sigma$ be a smooth projective toric variety. Let $\E$ be a toric vector bundle on $X_\Sigma$; we denote the natural map from $\p(\E)$ to $X_\Sigma$ by $\pi$. We wish to study the Cox ring of $\p(\E)$. Let $ \rho_1,\ldots,\rho_n$ be the rays of $\Sigma$, and denote by $D_i$ the torus-invariant divisor associated to $\rho_i$. By the description of the Picard group of a projective bundle we have
\[ \Cox(\p(\E)) \simeq \bigoplus_{k,k_1,\ldots,k_n \in \Z} H^0(\p(\E),\OO(k) +  \pi^\ast k_1D_1 +\ldots +  \pi^\ast k_n D_n ) \]
\[\simeq \bigoplus_{k,k_1,\ldots,k_n \in \Z} H^0(X_\Sigma,S^k \E \otimes k_1D_1 \otimes \cdots \otimes k_n D_n ). \]
We will study the latter $\C$-algebra.

By construction the matroid $M(\E)$ has the property that each of the intersections $\cap_\rho E^{\rho}(j_\rho)$ can be written as the span of vectors from the ground set of the matroid. We denote by $L(\E)$ the set of all such intersections. Recall that we constructed the divisor  $D_e$ associated to a matroid vector $e \in M(\E)$, given by  $D_e = \sum_\rho a_\rho D_\rho$, where 
\[ a_\rho = \mathrm{max} \{j \in \Z: e \in E^{\rho}(j)\}. \]
We will now need a slight generalization of these divisors; to each linear space  $V \in L(\E)$  we associate a divisor $D_V = \sum_\rho a_\rho D_\rho$, where 
\[ a_\rho = \mathrm{max} \{j \in \Z: V \subset E^{\rho}(j)\}. \]

We denote by $P_V$ the associated polytope. If $V$ is one-dimensional we often identify it with a non-zero vector $e$ in its span and identify the associated divisor (and polytope) with $D_e$ (and $P_e$).  The parliament of polytopes is the collection of polytopes $P_e$, for $e$ in the ground set of the matroid $M(\E)$.

In the following paragraphs we present several technical results on these divisors. The main motivation behind these is to study the multiplication maps
\[H^0(\p(\E),\Li_1) \otimes H^0(\p(\E),\Li_2) \to H^0(\p(\E),\Li_1 \otimes \Li_2) . \]
We show that any such multiplication map can be lifted to a multiplication map of sections of line bundles on the toric variety $X_\Sigma$. Using this, we give a criterion for $\p(\E)$ to be a Mori Dream Space.

Many of the conclusions we obtain on Cox rings were already shown in the paper \cite{GHPS}, using different methods. However we think that our perspective is illuminating for understanding toric vector bundles, since our results are formulated and proved using the Klyachko filtrations directly. Moreover, we are able to isolate precisely what the generators of the Cox ring of $\p(\E)$ are: They are pullbacks of sections from the base, together with sections corresponding to matroid vectors of some $S^k \E$ which are not symmetric powers of matroid vectors of symmetric powers of $\E$ smaller than $k$. A key ingredient for doing this is knowing the Klyachko filtrations of tensor products and symmetric powers of $\E$, which was described in \cite{GonzalezOk}:

\begin{proposition} [{\cite[Corollary 3.2]{GonzalezOk}}]  \label{prop:filtensorprod}
Let $\E_1,... \E_s$ be toric vector bundles  on $X_\Sigma$. Then the Klyachko filtrations of their tensor product $\E_1 \otimes \cdots \otimes \E_s$ are given by
\[ (E_1 \otimes \cdots \otimes E_s)^\rho(j) = \sum_{j_1+ \cdots + j_s=j} E_1^\rho(j_1) \otimes \cdots \otimes E_s^\rho(j_s), \]
for any ray $\rho \in \Sigma$ and $j \in \Z$.
\end{proposition}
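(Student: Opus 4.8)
The plan is to reduce the statement to a purely local computation on a single affine chart, where each bundle splits into equivariant line bundles and the weights simply add under tensoring. First I would observe that $\E_1 \otimes \cdots \otimes \E_s$ is again a toric vector bundle, with the diagonal $T$-action, so by Klyachko's theorem it carries a well-defined filtration $(E_1 \otimes \cdots \otimes E_s)^\rho$ on each ray $\rho$, acting on its fiber at the identity, which is canonically $E_1 \otimes \cdots \otimes E_s$. Since the filtrations are indexed separately by the rays, it suffices to verify the formula for one fixed ray $\rho$ at a time. As $\Sigma$ is complete, $\rho$ is a face of some maximal cone $\sigma$, and the key point is that the compatibility condition recalled in \cref{posSection:prelim} lets me read off the filtration on $\rho$ from the splitting of the bundles over $U_\sigma$.

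On $U_\sigma$ each $\E_i$ splits equivariantly as $\bigoplus_k \OO(u^{(i)}_k)$, and the corresponding fiber vectors $L^{(i)}_k$ form a basis of $E_i$ that simultaneously diagonalizes the filtration on every ray of $\sigma$: one has $E_i^\rho(j) = \mathrm{Span}\{ L^{(i)}_k : \langle u^{(i)}_k, \rho\rangle \geq j\}$. Tensoring the splittings gives $(\E_1 \otimes \cdots \otimes \E_s)|_{U_\sigma} = \bigoplus \OO(u^{(1)}_{k_1} + \cdots + u^{(s)}_{k_s})$, with fiber basis the products $L^{(1)}_{k_1} \otimes \cdots \otimes L^{(s)}_{k_s}$. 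Reading off the filtration of the tensor product from this splitting, I obtain that $(E_1\otimes\cdots\otimes E_s)^\rho(j)$ is spanned by exactly those product vectors for which $\sum_i \langle u^{(i)}_{k_i}, \rho\rangle \geq j$.

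It then remains to match this with the right-hand side. Since each $E_i^\rho(j_i)$ is spanned by a subset of the $L^{(i)}_k$, the tensor product $E_1^\rho(j_1)\otimes\cdots\otimes E_s^\rho(j_s)$ is spanned by the corresponding products $L^{(1)}_{k_1}\otimes\cdots\otimes L^{(s)}_{k_s}$, so everything is expressed in the same product basis. Summing over $j_1 + \cdots + j_s = j$, one sees that a product vector appears if and only if there exist integers $j_i \leq \langle u^{(i)}_{k_i}, \rho\rangle$ with $\sum_i j_i = j$. Such a choice exists precisely when $\sum_i \langle u^{(i)}_{k_i}, \rho\rangle \geq j$, since the achievable values of $\sum_i j_i$ are exactly the integers at most $\sum_i \langle u^{(i)}_{k_i}, \rho\rangle$ (distribute the available slack among the $j_i$). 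This is exactly the condition from the previous step, so the two subspaces have the same spanning set and therefore coincide. I would argue this for general $s$ directly, though one could equally induct on $s$ using associativity of the tensor product together with the corresponding associativity of the convolution of the filtrations.

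The step I expect to require the most care is the reduction in the first paragraph: justifying that the filtration read off from the local splitting over $U_\sigma$ genuinely coincides with the intrinsic Klyachko filtration of the tensor product on $\rho$. This is where the compatibility condition does the real work, guaranteeing a single basis of each fiber that diagonalizes all the relevant filtrations at once; one must check that the product basis inherits this simultaneous-diagonalization property so that the weight-counting identity applies cleanly to subspaces rather than merely to spanning vectors. Once this is in place, the remaining combinatorics is elementary.
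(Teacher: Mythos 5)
Your proof is correct, but there is nothing in the paper to compare it against: the paper does not prove this proposition at all, it simply quotes it from \cite[Corollary 3.2]{GonzalezOk}. Your argument — fix a maximal cone $\sigma$ containing $\rho$, split each $\E_i$ equivariantly over $U_\sigma$ as $\bigoplus_k \OO(\ddiv(u^{(i)}_k))$, note that the tensor product then splits equivariantly into line bundles with characters $u^{(1)}_{k_1}+\cdots+u^{(s)}_{k_s}$, and compare spanning sets in the resulting product basis — is in substance the standard proof, and essentially the one in the cited reference. The step you flag as delicate is handled correctly by your sketch: the justification is that Klyachko's equivalence of categories is additive and sends the equivariant line bundle $\OO(\ddiv(u))$ to the filtration jumping exactly at $\langle u,\rho\rangle$ on each ray $\rho$ of $\sigma$, so \emph{any} equivariant splitting over $U_\sigma$ (in particular the tensored one) computes the filtrations; one does not need to know the answer in advance to see that the product basis diagonalizes them. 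The final combinatorial step is also sound, since each $\langle u^{(i)}_{k_i},\rho\rangle$ is an integer, so integers $j_i \leq \langle u^{(i)}_{k_i},\rho\rangle$ summing to $j$ exist precisely when $\sum_i \langle u^{(i)}_{k_i},\rho\rangle \geq j$.
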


\begin{proposition} [{\cite[Corollary 3.5]{GonzalezOk}}] \label{prop:filsympower}
Let $\E$ be toric vector bundles  on $X_\Sigma$. Then the Klyachko filtrations of its symmetric power $S^k \E$  are given by
\[ (S^k E)^\rho(j) = \sum_{j_1+ \cdots + j_k=j} \mathrm{Im} (E^\rho(j_1) \otimes \cdots \otimes E^\rho(j_k) \to S^k E), \]
for any ray $\rho \in \Sigma$ and $j \in \Z$.
\end{proposition}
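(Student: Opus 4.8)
The plan is to deduce the formula from the tensor-product formula of \cref{prop:filtensorprod} together with the functoriality of Klyachko's equivalence of categories. The key observation is that $S^k \E$ is naturally a $T$-equivariant quotient of $\E^{\otimes k}$: the symmetrization surjection $q \colon \E^{\otimes k} \to S^k \E$ is a morphism of toric vector bundles, since the symmetric-power functor respects the $T$-action. Under Klyachko's theorem this morphism corresponds to the surjective linear map $\psi \colon E^{\otimes k} \to S^k E$ sending a decomposable tensor to its symmetrization.

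First I would record the general principle that, for any surjection $\phi \colon \F \to \G$ of toric vector bundles with associated linear map $\psi \colon F \to G$, the Klyachko filtration of the quotient is the image filtration $G^\rho(j) = \psi\bigl(F^\rho(j)\bigr)$. Writing $\G = \F / \mathcal{K}$ for the kernel subbundle $\mathcal{K}$, the induced quotient filtration on $G$ is $G^\rho(j) = \bigl(F^\rho(j) + K\bigr)/K = \psi\bigl(F^\rho(j)\bigr)$, and these subspaces inherit exhaustiveness and separatedness from those of $F$ while still satisfying the compatibility condition, so by the equivalence of categories they genuinely describe $\G$. Applying this to $\psi$ together with \cref{prop:filtensorprod} for $\E_1 = \cdots = \E_k = \E$ gives
\[ (S^k E)^\rho(j) = \psi\bigl((E^{\otimes k})^\rho(j)\bigr) = \psi\Bigl( \sum_{j_1 + \cdots + j_k = j} E^\rho(j_1) \otimes \cdots \otimes E^\rho(j_k) \Bigr), \]
and since a linear map sends a sum of subspaces to the sum of their images, the right-hand side equals $\sum_{j_1 + \cdots + j_k = j} \mathrm{Im}\bigl(E^\rho(j_1) \otimes \cdots \otimes E^\rho(j_k) \to S^k E\bigr)$, which is exactly the claim.

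The one genuinely delicate point is the quotient principle of the second step: one must confirm that passing to $S^k \E$ commutes with taking images, i.e.\ that the Klyachko data of $S^k \E$ is the image filtration rather than some finer one. This is the content of the exactness of Klyachko's equivalence, and is where essentially all the work lies; the rest is formal manipulation of sums and images. A concrete alternative that sidesteps any appeal to exactness is to verify the compatibility condition for the image filtration directly: on a maximal cone $\sigma$ where $\E$ splits as $\bigoplus_i \OO(u_i)$, the bundle $S^k \E$ splits as a sum of the line bundles $\OO(u_{i_1} + \cdots + u_{i_k})$ over unordered multi-indices, and the images of the corresponding products of weight vectors furnish the characters and vectors required by the compatibility condition.
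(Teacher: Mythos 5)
First, a point of calibration: the paper contains no proof of this proposition at all --- it is imported verbatim as \cite[Corollary 3.5]{GonzalezOk} --- so your attempt can only be judged on its own merits, not against an internal argument. Your overall architecture (deduce the formula from \cref{prop:filtensorprod} by pushing the tensor filtration through the equivariant surjection $q\colon \E^{\otimes k}\to S^k\E$) is sound, but the justification of the key ``quotient principle'' is circular as written, and this is a genuine gap. The sentence asserting that the induced filtration on $G$ is $G^\rho(j)=\bigl(F^\rho(j)+K\bigr)/K$ is precisely the statement that has to be proved, and the appeal to the equivalence of categories does not close it: granting your check that the image filtration is exhaustive, separated and compatible, it defines \emph{some} toric vector bundle $\G'$ together with a filtration-respecting map $\G'\to\G$ which is the identity on underlying vector spaces, but such a map need not be an isomorphism of bundles. (On $\p^1$ the canonical equivariant map $\OO\to\OO(P)$, for $P$ a fixed point, is the identity on the Klyachko vector space and respects the filtrations, yet it is not an isomorphism; filtration-level and bundle-level statements can genuinely differ.) What you need is exactness of Klyachko's correspondence, and since Klyachko's theorem as cited in the paper is only stated as an equivalence of categories, that exactness must be argued, not invoked.

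The repair is short and should be part of the proof: the kernel $\mathcal{K}$ of the fiberwise-surjective map $q$ is a subbundle, hence coherent, so because $U_\rho$ is affine the map $H^0(U_\rho,\E^{\otimes k})\to H^0(U_\rho,S^k\E)$ is surjective; it is $T$-equivariant, hence surjective on every isotypical component, and by Klyachko's description of sections over the chart of a single ray the weight-$u$ component is exactly the map $(E^{\otimes k})^\rho(\langle u,\rho\rangle)\to (S^kE)^\rho(\langle u,\rho\rangle)$ induced by $\psi$. Letting $u$ run over characters with $\langle u,\rho\rangle=j$ (such $u$ exist since $\rho$ is primitive) gives $(S^kE)^\rho(j)=\psi\bigl((E^{\otimes k})^\rho(j)\bigr)$, after which your manipulation with sums of subspaces correctly finishes the argument. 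Your ``concrete alternative'' is also viable and is in fact closer to how the cited reference proves the result: on $U_\sigma$ the bundle $S^k\E$ splits equivariantly into line bundles $\OO(u_{i_1}+\cdots+u_{i_k})$ with weight vectors the products $L_{u_{i_1}}\cdots L_{u_{i_k}}$, so $(S^kE)^\rho(j)$ is the span of those products with $\langle u_{i_1}+\cdots+u_{i_k},\rho\rangle\geq j$, and the bookkeeping step of lowering the integers $\langle u_{i_l},\rho\rangle$ to integers $j_l$ with $j_1+\cdots+j_k=j$ identifies this span with $\sum_{j_1+\cdots+j_k=j}\mathrm{Im}\bigl(E^\rho(j_1)\otimes\cdots\otimes E^\rho(j_k)\to S^kE\bigr)$; note that this route proves the formula directly rather than merely verifying a compatibility condition, which is how it should be phrased. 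With either repair the proposal becomes a complete proof.
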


We now proceed to use these descriptions to study what happens to the divisors in the parliament of polytopes under taking tensor and symmetric products.

\begin{lemma} \label{prop:tensorDivisor}
Given two vector bundles $\E$ and $\F$ and vector subspaces $V \subset E$ and $W \subset F$, we have that
\[ D_V + D_W = D_{V \otimes W}, \]
where $V \otimes W$ is regarded as a vector subspace in $E \otimes F$, the fiber over $\E \otimes \F$ at the identity.
\end{lemma}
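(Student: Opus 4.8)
The plan is to prove the equality of divisors one coefficient at a time. Writing $D_V = \sum_\rho a_\rho D_\rho$, $D_W = \sum_\rho b_\rho D_\rho$ and $D_{V \otimes W} = \sum_\rho c_\rho D_\rho$, it suffices to establish $c_\rho = a_\rho + b_\rho$ for each fixed ray $\rho \in \Sigma(1)$. By definition $a_\rho = \max\{j : V \subset E^\rho(j)\}$, and similarly $b_\rho = \max\{j : W \subset F^\rho(j)\}$ and $c_\rho = \max\{j : V \otimes W \subset (E \otimes F)^\rho(j)\}$, so the whole statement reduces to a comparison of the three decreasing filtrations attached to the single fixed ray $\rho$, which I suppress from the notation. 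The essential input is \cref{prop:filtensorprod}, which gives $(E \otimes F)^\rho(j) = \sum_{j_1 + j_2 = j} E^\rho(j_1) \otimes F^\rho(j_2)$.

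The inclusion $c_\rho \geq a_\rho + b_\rho$ is immediate: since $V \subset E^\rho(a_\rho)$ and $W \subset F^\rho(b_\rho)$, the subspace $V \otimes W$ lies in $E^\rho(a_\rho) \otimes F^\rho(b_\rho)$, which the formula above places inside $(E \otimes F)^\rho(a_\rho + b_\rho)$. The real content, and the main obstacle, is the reverse inequality $c_\rho \leq a_\rho + b_\rho$, i.e. the assertion that $V \otimes W$ is \emph{not} contained in $(E \otimes F)^\rho(a_\rho + b_\rho + 1)$; one has to rule out that the tensor filtration pushes $V \otimes W$ strictly higher than expected.

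To handle this I would use the fact that a Klyachko filtration for a single ray satisfies no compatibility condition (those only couple filtrations of different rays inside a common cone); it is simply a decreasing filtration of $E$ indexed by $\Z$, and so admits an adapted basis $e_1,\ldots,e_r$ with $E^\rho(j) = \Span\{e_i : w_i \geq j\}$ for the weights $w_i = \max\{j : e_i \in E^\rho(j)\}$, and likewise a basis $f_1,\ldots,f_s$ of $F$ with weights $u_k$. Feeding these into \cref{prop:filtensorprod} shows that the product basis $\{e_i \otimes f_k\}$ is adapted to the tensor filtration with additive weights, namely $(E \otimes F)^\rho(j) = \Span\{e_i \otimes f_k : w_i + u_k \geq j\}$. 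For a nonzero vector $x$ set $\mathrm{val}(x) = \max\{j : x \in E^\rho(j)\}$; in an adapted basis this equals the minimal weight of a basis vector occurring in $x$ with nonzero coefficient, whence $a_\rho = \min_{0 \neq v \in V} \mathrm{val}(v)$, and analogously for $b_\rho$ and $c_\rho$.

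Finally I would exhibit an explicit witness of low weight. Choose $v \in V$ with $\mathrm{val}(v) = a_\rho$ and $w \in W$ with $\mathrm{val}(w) = b_\rho$; expanding in the adapted bases, some $e_{i_0}$ with $w_{i_0} = a_\rho$ occurs in $v$ and some $f_{k_0}$ with $u_{k_0} = b_\rho$ occurs in $w$, both with nonzero coefficient. In the expansion of $v \otimes w$ the basis vector $e_{i_0} \otimes f_{k_0}$ then appears with a nonzero coefficient and has weight exactly $a_\rho + b_\rho$, and because the $e_i \otimes f_k$ are linearly independent this coefficient cannot be cancelled by other terms. Hence $\mathrm{val}(v \otimes w) = a_\rho + b_\rho$, so $V \otimes W \not\subset (E \otimes F)^\rho(a_\rho + b_\rho + 1)$, giving $c_\rho \leq a_\rho + b_\rho$. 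Combining the two inequalities yields $c_\rho = a_\rho + b_\rho$ for every $\rho$, and therefore $D_V + D_W = D_{V \otimes W}$.
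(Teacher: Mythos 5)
Your proof is correct and follows essentially the same route as the paper's: reduce to one ray at a time, invoke the tensor-product filtration formula of \cref{prop:filtensorprod}, and compute with bases adapted to the single-ray filtrations, where weights add under tensor product. The only cosmetic difference is that the paper chooses the adapted bases to also contain bases of $V$ and $W$ (making $c_\rho = a_\rho + b_\rho$ immediate), whereas you instead argue via minimal-valuation witness vectors $v$ and $w$; this is a minor bookkeeping variation, and arguably spells out the reverse inequality more carefully than the paper does.
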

\begin{proof}
The filtrations of $\E \otimes \F$ is given by the tensor product of the filtrations of  $E$ and $F$. The claim is local, so in other words we can check the coefficient of each ray separately. Restricted to a fixed ray we can write the filtration of $\E \otimes \F$ in terms of a basis for $E$ containing a basis for $V$ and a basis for $F$ containing a basis for $W$. Then we see that
\[c_\rho = \max \{ l \;| \; V \otimes W \subset (E \otimes F)^\rho(l) \}, \]
is exactly the sum $a_\rho+b_\rho$, where
\[a_\rho = \max \{ l \;|\; V  \subset E^\rho(l) \}, \]
\[b_\rho = \max \{ l \;|\; W \subset F^\rho(l) \}, \]
 proving the claim.
\end{proof}

\begin{corollary} \label{cor:divisortensorprod}
If $\E_1,\ldots,\E_s$ are vector bundles and $V_i \subset E_i$ are subspaces, then
\[ D_{V_1}+\ldots+D_{V_s} = D_{V_1 \otimes \cdots \otimes V_s}. \]
\end{corollary}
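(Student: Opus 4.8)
The plan is to prove Corollary~\ref{cor:divisortensorprod} by induction on the number of factors $s$, using Lemma~\ref{prop:tensorDivisor} as the two-factor base case. The statement is precisely Lemma~\ref{prop:tensorDivisor} when $s=2$ (and is trivial when $s=1$), so it remains to carry out the inductive step.

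For the inductive step, I would assume the result holds for $s-1$ factors and consider vector bundles $\E_1,\ldots,\E_s$ with subspaces $V_i \subset E_i$. Group the factors as $(\E_1 \otimes \cdots \otimes \E_{s-1}) \otimes \E_s$, with the corresponding subspace grouping $(V_1 \otimes \cdots \otimes V_{s-1}) \otimes V_s$ inside $(E_1 \otimes \cdots \otimes E_{s-1}) \otimes E_s$. Writing $\E' = \E_1 \otimes \cdots \otimes \E_{s-1}$ and $V' = V_1 \otimes \cdots \otimes V_{s-1}$, apply Lemma~\ref{prop:tensorDivisor} to the two bundles $\E'$ and $\E_s$ with subspaces $V' \subset E'$ and $V_s \subset E_s$ to obtain
\[ D_{V'} + D_{V_s} = D_{V' \otimes V_s} = D_{V_1 \otimes \cdots \otimes V_s}. \]
Then apply the inductive hypothesis to $\E_1,\ldots,\E_{s-1}$ and $V_1,\ldots,V_{s-1}$ to get $D_{V'} = D_{V_1 \otimes \cdots \otimes V_{s-1}} = D_{V_1} + \cdots + D_{V_{s-1}}$. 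Substituting this into the previous equation yields $D_{V_1} + \cdots + D_{V_s} = D_{V_1 \otimes \cdots \otimes V_s}$, completing the induction.

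The one point requiring a little care, and the likeliest source of any genuine obstacle, is the implicit compatibility of the tensor-product filtration structure under reassociation: applying Lemma~\ref{prop:tensorDivisor} to the grouped bundle $\E'$ requires that the Klyachko filtration of $\E' = \E_1 \otimes \cdots \otimes \E_{s-1}$ computed as an $(s-1)$-fold tensor product via Proposition~\ref{prop:filtensorprod} agrees, under the canonical associativity isomorphism of tensor products, with treating $\E'$ as a single bundle and then tensoring with $\E_s$. Since Proposition~\ref{prop:filtensorprod} already gives the filtration of an $s$-fold tensor product as the sum over all compositions $j_1 + \cdots + j_s = j$, and this sum is manifestly insensitive to how the factors are bracketed, this associativity is automatic; I would note it rather than belabor it. Everything else is formal, since the argument reduces entirely to the additivity already established for two factors.
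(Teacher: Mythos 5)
Your proof is correct and is exactly the argument the paper intends: the paper's own proof is simply ``By induction on $s$,'' and your write-up fills in that same induction, using Lemma~\ref{prop:tensorDivisor} as the base case and regrouping the factors for the inductive step. Your remark about associativity of the filtration formula from Proposition~\ref{prop:filtensorprod} is a sensible detail to note, and it is indeed automatic for the reason you give.
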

\begin{proof}
By induction on $s$.
\end{proof}

\begin{lemma}
Given a vector bundle $\E$ and subspaces $V_i \subset E$,  then 
\[ D_{V_1} + \ldots + D_{V_s} = D_{V_1 \cdots V_s}, \]
where $V_1 \cdots V_s$ is the subspace of $S^s E$ which by definition is the image of $V_1 \otimes \cdots \otimes V_s$ under the natural map $E^{\otimes s} \to S^s E$.
\end{lemma}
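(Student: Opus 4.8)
The plan is to reduce the symmetric-power statement to the tensor-product statement already proved in \cref{cor:divisortensorprod}. The key observation is that the divisor $D_V$ depends only on the numbers $a_\rho = \max\{l \mid V \subset E^\rho(l)\}$, which are determined ray-by-ray; the claim is therefore local on rays, exactly as in the proof of \cref{prop:tensorDivisor}. So I would fix a ray $\rho$ and show that the single coefficient
\[ c_\rho = \max\{l \mid V_1 \cdots V_s \subset (S^s E)^\rho(l)\} \]
equals $a_{1\rho} + \cdots + a_{s\rho}$, where $a_{i\rho} = \max\{l \mid V_i \subset E^\rho(l)\}$.

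First I would establish the inequality $c_\rho \geq a_{1\rho} + \cdots + a_{s\rho}$. By \cref{prop:filsympower}, setting $j = a_{1\rho} + \cdots + a_{s\rho}$, the space $(S^s E)^\rho(j)$ contains the image of $E^\rho(a_{1\rho}) \otimes \cdots \otimes E^\rho(a_{s\rho})$ under the natural map $E^{\otimes s} \to S^s E$. Since $V_i \subset E^\rho(a_{i\rho})$ by definition of $a_{i\rho}$, the subspace $V_1 \cdots V_s$ — which is the image of $V_1 \otimes \cdots \otimes V_s$ — is contained in this image, hence in $(S^s E)^\rho(j)$. This gives $c_\rho \geq j$ directly.

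The reverse inequality $c_\rho \leq a_{1\rho} + \cdots + a_{s\rho}$ is the main obstacle, and here I would lift the computation to the tensor product to avoid arguing directly with the quotient map. The clean route is to choose, for the fixed ray $\rho$, a basis of $E$ adapted to the filtration $E^\rho(\bullet)$, so that each $E^\rho(l)$ is spanned by a subset of basis vectors; after reordering we may assume the basis vectors carry weights $w_1 \geq w_2 \geq \cdots$ where $E^\rho(l)$ is the span of those $e_i$ with $w_i \geq l$. Passing to the induced monomial basis of $S^s E$, the filtration from \cref{prop:filsympower} becomes a weight filtration in which a monomial $e_{i_1} \cdots e_{i_s}$ has weight $w_{i_1} + \cdots + w_{i_s}$, and $(S^s E)^\rho(l)$ is spanned by the monomials of weight $\geq l$. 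The point is then that the maximal weight $l$ for which $V_1 \cdots V_s \subset (S^s E)^\rho(l)$ cannot exceed the sum of the individual maximal weights: any product of a vector of top weight $a_{i\rho}$ from each $V_i$ produces a monomial component of weight exactly $\sum_i a_{i\rho}$ that does not vanish in $S^s E$, so $V_1\cdots V_s$ is not contained in $(S^s E)^\rho(l)$ for any $l > \sum_i a_{i\rho}$.

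Combining the two inequalities gives $c_\rho = a_{1\rho} + \cdots + a_{s\rho}$ for every ray $\rho$, and summing over rays yields $D_{V_1 \cdots V_s} = D_{V_1} + \cdots + D_{V_s}$. An alternative, perhaps slicker, packaging of the reverse inequality is to use that the natural surjection $E^{\otimes s} \to S^s E$ is a morphism of toric vector bundles and hence, by \cref{prop:filtensorprod} together with the functoriality of the filtrations, sends $(E^{\otimes s})^\rho(j)$ onto $(S^s E)^\rho(j)$; one then transports the already-established tensor identity $D_{V_1 \otimes \cdots \otimes V_s} = D_{V_1} + \cdots + D_{V_s}$ across this surjection, the only thing to check being that the top weight is not strictly raised by passing to the quotient — which is exactly the non-vanishing of the top-weight monomial above. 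I expect the genuine subtlety to lie entirely in this non-vanishing check: one must rule out the possibility that the symmetrization map kills the top-weight part of $V_1 \otimes \cdots \otimes V_s$, and the adapted-basis argument is what makes this transparent.
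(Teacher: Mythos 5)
Your proof is correct, and on the hard inequality it takes a genuinely different route from the paper. The paper also works ray by ray and gets $c_\rho \geq a_{1\rho}+\cdots+a_{s\rho}$ straight from \cref{prop:filsympower}, exactly as you do; but for the reverse inequality it argues by contradiction: assuming $c_\rho$ is strictly larger, it uses \cref{cor:divisortensorprod} to produce an element of $V_1\otimes\cdots\otimes V_s$ sitting at level exactly $\sum_i a_{i\rho}$ of the tensor filtration, passes to its image in $S^s E$, lifts that image back into the tensor filtration at level $c_\rho$ (possible because, combining \cref{prop:filtensorprod} and \cref{prop:filsympower}, the symmetric filtration is the image of the tensor filtration under $E^{\otimes s}\to S^s E$), and then invokes the invariance of the tensor filtration subspaces under the symmetric group on $s$ letters to reach a contradiction. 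Your argument instead makes the filtration concrete via a basis adapted to $E^\rho(\bullet)$, so that $(S^s E)^\rho(l)$ becomes the span of monomials of weight at least $l$, and then checks directly that the extremal-weight component of a product survives; this is direct (no contradiction, no lifting to the tensor algebra) and it isolates the entire subtlety in one non-vanishing statement, which is indeed the same job the symmetric-group-invariance step does in the paper. One improvement: make the justification of that non-vanishing explicit rather than asserted. Writing $v_i = v_i^{(a_{i\rho})} + (\text{terms of strictly higher weight})$ with $v_i^{(a_{i\rho})}\neq 0$, the weight-$(\sum_i a_{i\rho})$ component of $v_1\cdots v_s$ is exactly the product $v_1^{(a_{1\rho})}\cdots v_s^{(a_{s\rho})}$, and this is nonzero because $S^\bullet E$ is a polynomial ring, hence an integral domain, so a product of nonzero linear forms cannot vanish; since the weight decomposition of $S^s E$ is a direct sum, a nonzero component in that weight rules out membership in $(S^s E)^\rho(\sum_i a_{i\rho}+1)$. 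Finally, note that the ``alternative packaging'' you sketch at the end --- transporting the tensor identity across the surjection $E^{\otimes s}\to S^s E$ and checking the top filtration level is not raised --- is essentially the paper's actual proof, so your main argument and your alternative are, respectively, a new proof and a reconstruction of the published one.
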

\begin{proof}
The filtrations of $S^s \E$ is given by taking sums of symmetric products of the subspaces appearing in the filtrations \cref{prop:filsympower}. The claim is local, so in other words we can check the coefficient of each ray separately. Writing $D_{V_1 \cdots V_s} = \sum_\rho c_\rho D_\rho$ and $D_{V_i} = \sum_\rho a_\rho^i D_\rho$ we have that
\[c_\rho = \max \{ l \;| \; V_1 \cdots V_s  \subset (S^s E)^\rho(l) \}. \]
By \cref{prop:filsympower} we have  that $c_\rho$ has to be greater than or equal to the sum $\sum_i a_\rho^i$. If it is actually greater, then we contradict \cref{cor:divisortensorprod}: there has to be some $v \in \cap (E^{\otimes s})^\rho(c_\rho)$ which is not in the vector space $W_0=\cap_\rho(E^{\otimes s})^\rho(\sum_i a_\rho^i)$. We can map $v$ to its image $\overline{v}$ in $S^s E$ and on to $w_1 \in W_1=\cap_\rho(E^s)^\rho(\sum_i a_\rho^i)$. Since $W_0 \to W_1$ is simply induced from the quotient morphism $E^{\otimes s} \to S^s E$ we can lift $w_1$ to some $w_0$ in $W_0$. 

By \cref{prop:filtensorprod} we observe that the vector spaces appearing in the filtrations of $\E^{\otimes s}$ are invariant under the action of the symmetric group on $s$ letters. Thus if $w_0$ is in $W_0$, then $v$ also has to be there, which is a contradiction.
\end{proof}

Recall that for a character $u \in M$, $H^0(X_\Sigma,\E)_u = \cap E^\rho(\lb u,\rho \rb)$. 

\begin{proposition}
	The natural map $H^0(\E)_u \otimes H^0(\F)_v \to H^0(\E \otimes \F)_{u+v}$ is given by sending $s \in \cap E^\rho(\lb u,\rho \rb)$ and $t \in \cap F^\rho(\lb v,\rho \rb)$ to $s \otimes t \in \cap (E \otimes F)^\rho(\lb u+v,\rho \rb)$.
\end{proposition}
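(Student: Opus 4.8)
The plan is to identify the cohomology group $H^0(\E\otimes\F)_{u+v}$ explicitly via Klyachko's formula and then track where the image of a decomposable tensor $s\otimes t$ lands. By Klyachko's description of global sections we have
\[ H^0(\E\otimes\F)_{u+v} = \bigcap_{\rho} (E\otimes F)^\rho(\langle u+v,\rho\rangle), \]
and by \cref{prop:filtensorprod} the filtration on the right decomposes as
\[ (E\otimes F)^\rho(\langle u+v,\rho\rangle) = \sum_{j_1+j_2 = \langle u+v,\rho\rangle} E^\rho(j_1)\otimes F^\rho(j_2). \]
First I would fix $s\in\bigcap_\rho E^\rho(\langle u,\rho\rangle)$ and $t\in\bigcap_\rho F^\rho(\langle v,\rho\rangle)$, and verify membership of $s\otimes t$ ray by ray: since for each $\rho$ we have $s\in E^\rho(\langle u,\rho\rangle)$ and $t\in F^\rho(\langle v,\rho\rangle)$, the decomposable tensor $s\otimes t$ sits in the summand indexed by $(j_1,j_2)=(\langle u,\rho\rangle,\langle v,\rho\rangle)$, whose indices sum to $\langle u+v,\rho\rangle$. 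Hence $s\otimes t\in (E\otimes F)^\rho(\langle u+v,\rho\rangle)$ for every $\rho$, so it lies in the intersection and defines an element of $H^0(\E\otimes\F)_{u+v}$.

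The substance of the statement is that this set-theoretic recipe \emph{is} the natural multiplication map, i.e. that the identification of global sections with the intersection of filtration pieces is compatible with tensor product. For this I would appeal to the functoriality underlying Klyachko's equivalence of categories: the multiplication $H^0(\E)\otimes H^0(\F)\to H^0(\E\otimes\F)$ is induced by the canonical map of sheaves $\E\otimes\F$, which at the level of the fiber $E\otimes F$ at the identity of the torus is literally the tensor product map $E\otimes F\to E\otimes F$. Under Klyachko's correspondence the isotypical components of $H^0$ are recovered as intersections of filtration subspaces of $E$ (resp. $F$, $E\otimes F$), and a map of bundles corresponds to the induced map on these subspaces. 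Since the sheaf-theoretic multiplication corresponds fiberwise to $(s,t)\mapsto s\otimes t$, the induced map on the $u$- and $v$-isotypical components must be exactly $s\otimes t\mapsto s\otimes t$, landing in the $(u+v)$-component as computed above.

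The main obstacle I anticipate is not any computation but pinning down the precise sense in which the abstract multiplication map agrees with the concrete formula. In effect this is a compatibility (naturality) check: one must ensure that the isomorphism $H^0(\E)_u\cong\bigcap_\rho E^\rho(\langle u,\rho\rangle)$ is natural in $\E$, so that applying it to the tensor-product bundle and to the individual factors yields a commuting square, with the horizontal maps being multiplication of sections and the vertical maps the Klyachko identifications. Granting this naturality — which is built into Klyachko's equivalence, as a morphism of bundles corresponds to a filtration-respecting linear map on fibers — the statement reduces to the elementary ray-by-ray inclusion already verified, and no further work is required.
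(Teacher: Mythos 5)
Your proof is correct, but it follows a genuinely different route from the paper's. The paper argues locally: on each affine chart $U_\sigma$ both bundles split equivariantly as sums of line bundles $\oplus_i \OO(u_i)$ and $\oplus_j \OO(v_j)$, the multiplication map becomes multiplication of characters $\chi^{u_i} \otimes \chi^{v_j} \mapsto \chi^{u_i+v_j}$, and the claim follows chart by chart without ever invoking the filtration formula for tensor products. You instead argue globally: a ray-by-ray membership check via \cref{prop:filtensorprod}, which correctly places $s \otimes t$ in $\cap_\rho (E\otimes F)^\rho(\langle u+v,\rho\rangle)$ since $E^\rho(\langle u,\rho\rangle)\otimes F^\rho(\langle v,\rho\rangle)$ is one of the summands in that filtration piece, followed by an identification of the abstract multiplication map with fiberwise tensor product. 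Your route buys a self-contained argument that never leaves the Klyachko data, at the cost of needing \cref{prop:filtensorprod}; the paper's local splitting argument avoids that input but leans on the equivariant trivializations. One imprecision you should fix: the compatibility you need is not literally ``functoriality of Klyachko's equivalence,'' because the multiplication map is a bilinear pairing, not a morphism of toric vector bundles, so the equivalence of categories does not directly apply to it. What actually does the work is that the identification $H^0(\E)_u \cong \cap_\rho E^\rho(\langle u,\rho\rangle)$ is evaluation of an equivariant section at the identity of the torus (equivalently, restriction to the dense orbit and extraction of the coefficient of $\chi^u$), and evaluation at a point takes a product of sections to the tensor product of their values. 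You in fact say this (``the sheaf-theoretic multiplication corresponds fiberwise to $(s,t)\mapsto s\otimes t$''), so the issue is one of attribution rather than substance; rephrasing your final paragraph in those terms makes the argument complete.
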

\begin{proof}
We consider the local situation: On any open affine $U_\sigma$, where $\sigma \in \Sigma$, we have that the bundles split as a sum of line bundles, i.e.
\[ \E|_{U_\sigma} \simeq \oplus_{i=1}^s \OO(u_i), \]
\[ \F|_{U_\sigma} \simeq \oplus_{j=1}^t \OO(v_j). \]
Thus the multiplication map corresponds to a map
\[ \oplus_{i=1}^s H^0(\OO(u_i)) \otimes \oplus_{j=1}^t H^0(\OO(v_j))  \to \oplus_{i,j=1}^{s,t} H^0(\OO(u_i+v_j)), \]
which is given simply by sending $\chi^{u_i} \in H^0(\OO(u_i),U_\sigma), \chi^{v_j} \in H^0(\OO(v_j),U_\sigma)$ to $\chi^{u_i+v_j} \in H^0(\OO(u_i+v_j),U_\sigma)$.

Now by construction of the matroids, $u_i$ corresponds to some $e_i \in M(\E)$ and $v_j$ corresponds to some $f_j \in M(\F)$, thus the above  can be written as
\[ H^0(U_\sigma,D_{e_i}) \otimes H^0(U_\sigma,D_{f_j}) \to H^0(U_\sigma,D_{e_i}+D_{f_j}) = H^0(U_\sigma,D_{e_i \otimes f_j}). \]
Thus locally the multiplication of sections $s,t$ is given by $s \otimes t$, and the result follows.
\end{proof}

\begin{proposition}
Given a map $\E \to \F$ of vector bundles, corresponding to the linear map $\phi: E \to F$,  the induced map $H^0(\E) \to H^0(\F)$ is given by sending $s \in H^0(\E)_u= \cap E^\rho(\lb u,\rho \rb)$ to $\phi(s) \in H^0(\F)_u$.
\end{proposition}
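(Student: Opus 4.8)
The plan is to unwind Klyachko's equivalence of categories and reduce the statement to the observation that the isomorphism $H^0(\E)_u \simeq \cap_\rho E^\rho(\langle u,\rho\rangle)$ is nothing but evaluation of an equivariant section at the identity of the torus. First I would recall two facts already available in the excerpt. By Klyachko's classification theorem, the morphism $\E \to \F$ corresponds to the linear map $\phi : E \to F$ on the fibers over the identity $1 \in T$, and this $\phi$ respects the filtrations, i.e. $\phi(E^\rho(j)) \subseteq F^\rho(j)$ for every ray $\rho$ and every $j \in \Z$. Second, Klyachko's description of the isotypical components, $H^0(\E)_u = \cap_\rho E^\rho(\langle u,\rho\rangle)$, arises because a section $s$ in the weight-$u$ component is a $T$-eigensection determined by its value $s(1) \in E$ at the identity fiber; this value lies precisely in $\cap_\rho E^\rho(\langle u,\rho\rangle)$, and conversely every such vector extends uniquely to an eigensection of weight $u$.

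With these identifications in place, the well-definedness of the claimed map is immediate: if $s$ corresponds to $s(1) \in \cap_\rho E^\rho(\langle u,\rho\rangle)$, then since $\phi$ respects the filtrations we get $\phi(s(1)) \in \cap_\rho F^\rho(\langle u,\rho\rangle)$, so $\phi(s(1))$ indeed represents a section lying in $H^0(\F)_u$.

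The core of the argument is then functoriality. The induced map on global sections sends a section $s$ to the composite $\phi \circ s$. Since $\phi$ is a $T$-equivariant morphism of bundles, $\phi \circ s$ is again a $T$-eigensection of the same weight $u$, and evaluating at the identity fiber commutes with applying $\phi$, so $(\phi \circ s)(1) = \phi(s(1))$. Under the evaluation isomorphisms this says exactly that the induced map $H^0(\E)_u \to H^0(\F)_u$ is the restriction of $\phi$ to $\cap_\rho E^\rho(\langle u,\rho\rangle)$, which is the assertion; summing over all $u \in M$ gives the statement on all of $H^0$.

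The step that needs the most care is the second one: pinning down that the identification $H^0(\E)_u \simeq \cap_\rho E^\rho(\langle u,\rho\rangle)$ is literally given by evaluation at the identity fiber, and that the linear map in Klyachko's correspondence is this same evaluation-at-identity map. Once this is made precise rather than treated as an abstract isomorphism, everything else is formal. One could instead argue locally on the affines $U_\sigma$ as in the previous proposition, writing $\phi|_{U_\sigma}$ as a matrix of maps $\OO(u_i) \to \OO(v_j)$; but a general $\phi$ is not diagonal in the local splittings, so the bookkeeping is heavier, and I would prefer the global equivariant description.
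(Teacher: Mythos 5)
Your proof is correct and follows essentially the same route as the paper, whose entire proof is the one-line remark ``This follows from Klyachko's classification theorem''; you have simply spelled out the details that the paper leaves implicit, namely that the identification $H^0(\E)_u \simeq \cap_\rho E^\rho(\langle u,\rho\rangle)$ is evaluation of an eigensection at the identity of the torus, that this evaluation commutes with applying the equivariant bundle map, and that filtration-compatibility of $\phi$ gives well-definedness.
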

\begin{proof}
This follows from Klyachko's classification theorem.
\end{proof}

\begin{corollary}
The natural map $S^a \E \otimes S^b \E \to S^{a+b} \E$ induces the map $H^0(S^a \E \otimes S^b \E) \to H^0(S^{a+b} \E)$ given by $s \otimes t \mapsto st$.
\end{corollary}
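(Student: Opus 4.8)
The plan is to obtain this corollary by combining the two immediately preceding propositions. The natural multiplication map $S^a \E \otimes S^b \E \to S^{a+b} \E$ is a $T$-equivariant morphism of toric vector bundles, so under Klyachko's equivalence it corresponds to a linear map $\phi \colon S^a E \otimes S^b E \to S^{a+b} E$ of the fibers over the identity; concretely $\phi$ is the multiplication map of the symmetric algebra, sending a decomposable tensor to the corresponding product.

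First I would apply the proposition on morphisms of toric vector bundles to this map: the induced map on global sections sends any section $w \in H^0(S^a \E \otimes S^b \E)_u = \cap_\rho (S^a E \otimes S^b E)^\rho(\langle u,\rho \rangle)$ to its image $\phi(w)$, computed on the fiber at the identity. Next I would invoke the proposition describing the tensor-product multiplication, which identifies the image of $s \otimes t$ under $H^0(S^a \E)_u \otimes H^0(S^b \E)_v \to H^0(S^a \E \otimes S^b \E)_{u+v}$ with the tensor $s \otimes t$ sitting inside $\cap_\rho (S^a E \otimes S^b E)^\rho(\langle u+v,\rho \rangle)$. Composing, the section $s \otimes t$ is carried to $\phi(s \otimes t) = st$, which is precisely the assertion.

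The one point that deserves a check is that the natural multiplication is genuinely a morphism in the sense of Klyachko's theorem, that is, that $\phi$ respects the ray filtrations. This is immediate from \cref{prop:filtensorprod} and \cref{prop:filsympower}: their descriptions of $(S^k E)^\rho(j)$ as images of sums of products $E^\rho(j_1) \otimes \cdots \otimes E^\rho(j_k)$ show at once that $\phi$ maps $(S^a E \otimes S^b E)^\rho(j)$ into $(S^{a+b} E)^\rho(j)$ for every ray $\rho$ and every $j$. Once this compatibility is noted, no real obstacle remains and the corollary follows formally from the two propositions; the content is entirely bookkeeping about which concrete map the functorial construction realizes.
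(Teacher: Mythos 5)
Your proof is correct and is essentially the paper's intended argument: the paper states this corollary without proof as an immediate consequence of the two preceding propositions, exactly as you combine them (the proposition on morphisms of toric vector bundles applied to the symmetric multiplication map, together with the proposition identifying the image of $s \otimes t$ in $H^0(\E \otimes \F)$). Your additional verification that the multiplication map respects the Klyachko filtrations, via \cref{prop:filtensorprod} and \cref{prop:filsympower}, only makes explicit a compatibility the paper leaves implicit.
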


By the above we also get
\begin{proposition}
The natural map $H^0(S^a \E) \otimes H^0(S^b \E) \to H^0(S^{a+b} \E)$ is given by $s \otimes t \mapsto st$.
\end{proposition}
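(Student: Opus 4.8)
The plan is to reduce the global statement about the multiplication map on symmetric powers to the local affine statement, which has already been established in the preceding corollary. The key observation is that global sections decompose into isotypical components indexed by characters $u \in M$, and the multiplication map respects this grading: a section in $H^0(S^a\E)_u$ multiplied by one in $H^0(S^b\E)_v$ lands in the component $H^0(S^{a+b}\E)_{u+v}$. So it suffices to understand the map on each graded piece.

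First I would recall that $H^0(S^a\E)_u = \cap_\rho (S^aE)^\rho(\langle u,\rho\rangle)$, so a section $s$ in this component is genuinely an element of the fiber vector space $S^aE$ (lying in the prescribed intersection of filtration pieces), and similarly for $t \in S^bE$. The product $st$ is then just the image of $s \otimes t$ under the natural linear map $S^aE \otimes S^bE \to S^{a+b}E$. The content to verify is twofold: that this image actually lies in the correct filtration intersection $\cap_\rho (S^{a+b}E)^\rho(\langle u+v,\rho\rangle)$, so that it defines an element of $H^0(S^{a+b}\E)_{u+v}$; and that this linear-algebra operation is indeed what the geometric multiplication of sections computes.

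The second point is precisely the content of the corollary immediately preceding this statement, which identifies the map $H^0(S^a\E \otimes S^b\E) \to H^0(S^{a+b}\E)$ induced by the natural bundle map $S^a\E \otimes S^b\E \to S^{a+b}\E$ with $s \otimes t \mapsto st$. Composing this with the earlier proposition describing the tensor-product multiplication map $H^0(\E)_u \otimes H^0(\F)_v \to H^0(\E\otimes\F)_{u+v}$ (applied with $\E,\F$ replaced by $S^a\E, S^b\E$), which sends $s \otimes t$ to $s \otimes t$ inside the tensor-product filtration, gives the result by functoriality: the composite $H^0(S^a\E)_u \otimes H^0(S^b\E)_v \to H^0(S^a\E \otimes S^b\E)_{u+v} \to H^0(S^{a+b}\E)_{u+v}$ sends $s \otimes t$ first to the tensor $s \otimes t$ and then to its image $st$ in $S^{a+b}E$. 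I would therefore simply cite these two preceding results and note that their composition is the asserted multiplication map.

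I do not expect a genuine obstacle here, since the work has been front-loaded into \cref{prop:tensorDivisor} and the two propositions just above. The only point requiring a moment of care is the compatibility of filtrations under the composite map — ensuring that the image $st$ lands in the right filtration piece — but this follows formally from \cref{prop:filtensorprod} and \cref{prop:filsympower}, which together guarantee that the image of $(E^{\otimes a})^\rho \otimes (E^{\otimes b})^\rho$ in $S^{a+b}E$ sits inside $(S^{a+b}E)^\rho$ with the indices adding correctly. Thus the proof is essentially a one-line composition of the two preceding results.
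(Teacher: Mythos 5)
Your proposal is correct and matches the paper's proof exactly: the paper likewise factors the multiplication map as the composition $H^0(S^a \E) \otimes H^0(S^b \E) \to H^0(S^a \E \otimes S^b \E) \to H^0(S^{a+b} \E)$ and invokes the two immediately preceding results (the tensor-product multiplication proposition and the corollary on the map induced by $S^a \E \otimes S^b \E \to S^{a+b} \E$). Your additional remarks on the isotypical grading and filtration compatibility are sound but are, as you note, already front-loaded into those earlier statements.
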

\begin{proof}
This follows from the above, since it is the composition
\[ H^0(S^a \E) \otimes H^0(S^b \E)  \to H^0(S^a \E \otimes S^b \E) \to H^0(S^{a+b} \E).\]
\end{proof}

\begin{lemma}  \label{lemma:matroidtensorl}
If $\E$ is a vector bundle and $\LL$ a line bundle then $M(\E \otimes \LL) \simeq M(\E)$ under any isomorphism $E \otimes L \simeq E$. If $v \in M(\E)$ then the corresponding $v' \in M(\E \otimes \LL)$ satisfies $D_{v'}= D_{v} \otimes \LL$.
\end{lemma}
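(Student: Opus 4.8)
The plan is to verify this via Klyachko's classification theorem, using the fact that tensoring by a line bundle shifts all filtrations by a fixed integer at each ray. Recall that a line bundle $\LL$ corresponds to a one-dimensional vector space $L$ with filtrations $L^\rho(j)$, and by Klyachko's compatibility condition on each maximal cone $\LL$ is just $\OO(m)$ for some character $m$. Concretely, for each ray $\rho$ there is an integer $b_\rho = \max\{j : L = L^\rho(j)\}$, so that $L^\rho(j) = L$ for $j \leq b_\rho$ and $L^\rho(j) = 0$ for $j > b_\rho$. By \cref{prop:filtensorprod} applied to $\E$ and $\LL$, the filtration of $\E \otimes \LL$ at $\rho$ is
\[ (E \otimes L)^\rho(j) = \sum_{j_1 + j_2 = j} E^\rho(j_1) \otimes L^\rho(j_2) = E^\rho(j - b_\rho) \otimes L, \]
since $L^\rho(j_2)$ is nonzero only for $j_2 \leq b_\rho$ and the largest contribution comes from $j_2 = b_\rho$. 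Thus under the chosen isomorphism $E \otimes L \simeq E$, the filtration of $\E \otimes \LL$ is simply the filtration of $\E$ shifted by $b_\rho$ at each ray.

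First I would use this shift to establish the matroid isomorphism. The ground set of $M(\E)$ is built inductively from the intersections $\cap_\rho E^\rho(j_\rho)$; since each filtration of $\E \otimes \LL$ differs from that of $\E$ only by a uniform reindexing $j \mapsto j - b_\rho$ at ray $\rho$, the collection of all such intersections is identical as a collection of subspaces of $E \cong E \otimes L$. In particular the lattice $L(\E)$ of intersections and the inductive choices of complementary subspaces match up, so the representable matroids $M(\E \otimes \LL)$ and $M(\E)$ coincide under the isomorphism $E \otimes L \simeq E$. This gives the bijection $v \mapsto v'$ with $v'$ spanning the same subspace as $v$.

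It remains to compute the divisor $D_{v'}$. Writing $D_{v'} = \sum_\rho a'_\rho D_\rho$, by definition
\[ a'_\rho = \max\{j : v' \in (E \otimes L)^\rho(j)\} = \max\{j : v \in E^\rho(j - b_\rho)\} = \max\{j : v \in E^\rho(j)\} + b_\rho = a_\rho + b_\rho, \]
where $a_\rho = \max\{j : v \in E^\rho(j)\}$ is the coefficient of $D_v$. Since $\LL$ corresponds to the divisor with coefficients $b_\rho$, this says exactly $D_{v'} = D_v \otimes \LL$ (additively, $D_{v'} = D_v + \sum_\rho b_\rho D_\rho$), as claimed.

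I expect the only genuine subtlety to be bookkeeping around the sign and indexing conventions, together with checking that the inductive construction of the matroid ground set is genuinely insensitive to the per-ray shift — i.e. that shifting the filtration indices does not disturb which intersections have which dimension, nor the complementary-subspace choices. This is essentially the heuristic already stated in the introduction, that $M(\E \otimes \LL) = M(\E)$ because tensoring with a one-dimensional space does not change linear-algebra relations; the content here is just making the divisor-level shift precise via \cref{prop:filtensorprod}.
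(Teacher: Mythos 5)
Your proof is correct and follows essentially the same route as the paper: both hinge on the observation that tensoring by $\LL=\sum_\rho b_\rho D_\rho$ shifts each Klyachko filtration by $b_\rho$, so the collection of intersections $\cap_\rho E^\rho(j_\rho)$ — and hence the inductively constructed matroid — is unchanged, while the divisor coefficients shift by $b_\rho$. The only cosmetic difference is that you derive the shift formula from \cref{prop:filtensorprod}, whereas the paper simply asserts $E^\rho(j)\simeq (E\otimes L)^\rho(j+b_\rho)$ directly; your derivation is a valid (indeed slightly more self-contained) justification of the same fact.
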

\begin{proof}
We will think of $\LL$ as a divisor, thus we can write $\LL = \sum a_i D_i$. Then we have  that $E^\rho(j) \simeq (E \otimes L)^\rho(j+a_\rho)$. The matroid is constructed by choosing bases (in a particular manner) of all possible subspaces of the form $\cap_\rho E^\rho(j_\rho)$. By the above equality we see that under the isomorphism $E \otimes L \simeq E$ we get exactly the same subspaces for $\E \otimes \LL$ thus the matroids are equal.

Given $v \in M(\E)$ we can write $D_v = \sum b_\rho D_\rho$. Then by the above we see that $D_{v'} = \sum (a_\rho+b_\rho) D_\rho$, which equals $D_v \otimes \LL$.
\end{proof}
The above has the following consequences: Constructing the parliament of polytopes for a $\E$ is equivalent to constructing the surjection
\[ \bigoplus_{e \in M(\E)} \OO(D_e) \to \E \to 0. \]
We see by the lemma that constructing the parliament for $\E \otimes \LL$ corresponds to the tensored sequence
\[ \bigoplus_{e \in M(\E)} \OO(D_e) \otimes \LL \to \E \otimes \LL \to 0. \]
In other words the sequence will remain surjective on global sections after tensoring with any line bundle $\LL$. 

\begin{proposition} \label{prop:commDiagram}
Given a vector bundle $\E$ and line bundles $\LL_1,\LL_2$ on $X_\Sigma$ and vectors $e_1,e_2$ in the ground set of $M(\E)$, there exists a commutative diagram
\begin{equation*}
\begin{tikzcd}
 H^0(\OO(D_{e_1}) \otimes \LL_1)_u \otimes   H^0(\OO(D_{e_2}) \otimes \LL_2))_v  \arrow[r, "\phi"] \arrow[d,"f"]
&  H^0( \OO(D_{e_1e_2}) \otimes \LL_1 \otimes \LL_2))_{u+v}  \arrow[d,"g"] \\
H^0(S^k \E \otimes \LL_1)_u \otimes H^0(S^l \E \otimes \LL_2)_v \arrow[r , "\eta" ]
& H^0( S^{k+l} \E \otimes \LL_1 \otimes \LL_2)_{u+v}
\end{tikzcd}
\end{equation*}
 where $\eta$ and $\phi$ are  multiplication maps and f is the map coming from the parliament of polytopes. Also, $g$ is induced from the map of vector spaces sending  $ 1 \in \C$ to $e_1e_2$ in $S^{k+l} \E$.
\end{proposition}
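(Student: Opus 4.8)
The plan is to exhibit the square as the image, under the global-sections functor $H^0(-)$, of a commutative square of morphisms of toric vector bundles on $X_\Sigma$, and then to read off $\phi$, $\eta$, $f$ and $g$ from the explicit descriptions, obtained above, of multiplication maps and of the maps on sections induced by morphisms of bundles. Concretely, view $e_1 \in S^k E$ and $e_2 \in S^l E$ as spanning one-dimensional subspaces of the fibers at the identity. The parliament surjection for $S^k \E$ restricts on the $e_1$-summand to a morphism $\OO(D_{e_1}) \to S^k \E$ which, under Klyachko's equivalence, is the fiber map $\C \to S^k E$, $1 \mapsto e_1$; likewise for $e_2$. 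Tensoring these with $\LL_1$, resp. $\LL_2$, and taking the tensor product produces the left vertical morphism, whose fiber map is $1 \otimes 1 \mapsto e_1 \otimes e_2$ --- unchanged by the twist, since tensoring with a line bundle does not alter the fiber map by \cref{lemma:matroidtensorl}, so $f = f_1 \otimes f_2$. The right vertical morphism is $\OO(D_{e_1 e_2}) \to S^{k+l}\E$ with fiber map $1 \mapsto e_1 e_2$, twisted by $\LL_1 \otimes \LL_2$. The top map is the canonical isomorphism $\OO(D_{e_1}) \otimes \OO(D_{e_2}) \xrightarrow{\sim} \OO(D_{e_1 e_2})$ supplied by the identity $D_{e_1} + D_{e_2} = D_{e_1 e_2}$ (the symmetric-product analogue of \cref{prop:tensorDivisor}), twisted by $\LL_1 \otimes \LL_2$, and the bottom map is the symmetric multiplication $S^k \E \otimes S^l \E \to S^{k+l}\E$, likewise twisted.

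By Klyachko's classification theorem, a square of morphisms of toric vector bundles commutes precisely when the induced square of linear maps of the fibers at the identity commutes, so it suffices to chase $1 \otimes 1$ (the fiber generator of $\OO(D_{e_1}) \otimes \OO(D_{e_2})$) around the fiber square. Going right then down gives $1 \otimes 1 \mapsto 1 \mapsto e_1 e_2$, while going down then right gives $1 \otimes 1 \mapsto e_1 \otimes e_2 \mapsto e_1 e_2$, the last arrow being symmetric multiplication; the two agree. Before invoking this, I would record that each arrow really is a morphism of toric vector bundles, i.e. that its fiber map respects the Klyachko filtrations. For the vertical inclusions this is immediate from the definition of $D_{e_i}$ as the divisor whose $\rho$-coefficient is $\max\{ j : e_i \in (S^{k} E)^\rho(j)\}$, which is exactly what forces $\C \to S^k E$, $1 \mapsto e_i$, to be filtration-compatible; for the top isomorphism it is the divisor identity $D_{e_1} + D_{e_2} = D_{e_1 e_2}$ together with \cref{lemma:matroidtensorl}.

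Finally I would apply $H^0(-)$ and restrict to the graded pieces indexed by $u$, $v$ and $u+v$; this is legitimate since all four maps respect the $M$-grading. By the preceding propositions, $H^0$ of the two horizontal morphisms is exactly multiplication of sections (the tensor, resp. symmetric, product of sheaves followed by the relevant projection), so they become $\phi$ and $\eta$; and $H^0$ of the two vertical morphisms is the map $s \mapsto \phi(s)$ induced by a morphism of bundles, so they become $f = f_1 \otimes f_2$ and $g$. Functoriality of $H^0$ then transports the commutativity of the sheaf square to the stated diagram. The \textbf{main obstacle} is not the commutativity, which is the formal fiber chase above, but the bookkeeping: matching $H^0$ of the abstract sheaf maps with the concretely named $\phi,\eta,f,g$, and tracking both the isotypical gradings and the identification $\OO(D_{e_1}) \otimes \OO(D_{e_2}) = \OO(D_{e_1 e_2})$ throughout.
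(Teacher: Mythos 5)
Your proof is correct and is essentially the paper's own argument: the paper likewise verifies commutativity by chasing the distinguished generator through the four maps using their explicit Klyachko descriptions, computing $\phi(\chi^u \otimes \chi^v)=\chi^{u+v}$, $g(\chi^{u+v})=e_1e_2$, $f(\chi^u \otimes \chi^v)=e_1 \otimes e_2$ and $\eta(e_1 \otimes e_2)=e_1e_2$, which is precisely your fiber chase of $1 \otimes 1$ to $e_1e_2$ along both paths. The only cosmetic difference is that you package this chase as a sheaf-level square checked on fibers via Klyachko's equivalence followed by functoriality of $H^0$, whereas the paper performs the identical computation directly on the isotypical pieces of global sections.
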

\begin{proof}
We have that $\phi(\chi^u \otimes \chi^v)=\chi^{u+v}$. Moreover $g(\chi^{u+v}) = e_1e_2 \in H^0(S^{k+l} \E \otimes \LL_1 \otimes \LL_2)_{u+v} = \cap_\rho (E \otimes L_1 \otimes L_2)^\rho(\langle u,\rho \rangle)$.

Going in the other direction $f(\chi^u \otimes \chi^v)=e_1 \otimes e_2$, where we now consider $e_1 \in H^0(S^k E \otimes L_1)_u = \cap (E \otimes L_1)^\rho(\langle u,\rho \rangle) $ and  $e_2 \in H^0(S^k E \otimes L_2)_v = \cap (E \otimes L_2)^\rho(\langle u,\rho \rangle)$. We see that $\eta(e_1 \otimes e_2) = e_1e_2$.
\end{proof}

In particular all multiplication maps of global sections of line bundles on $\p(\E)$ can be lifted to multiplication of global sections of line bundles on $X_\Sigma$.

We now come to a key definition of this section, namely that of a set of vectors $\mathfrak{M}(\E)$, whose elements correspond to matroid vectors of some $S^k \E$ which are not symmetric products of matroid vectors for smaller $k$. We will subsequently see that the elements of $\mathfrak{M}(\E)$ (together with generators for $\Cox(X_\Sigma))$ correspond to generators of  $\Cox(\p(\E))$.

\begin{definition}
Given a toric vector bundle $\E$ we construct the set $\mathfrak{M}(\E)$ by induction on $k$ as follows: The step $k=1$ corresponds to adding all vectors in the ground set of $M(\E)$ to $\mathfrak{M}(\E)$. Assume now we have completed step $k-1$. Step $k$ amounts to adding all vectors of $M(S^k \E)$ which cannot be written as a symmetric product of vectors already in $\mathfrak{M}(\E)$.

For a vector $e \in \mathfrak{M}(\E)$ we denote by $\deg e$ the integer such that $e \in M(S^{\deg e} \E)$.
\end{definition}

\begin{proposition} \label{prop:surjRingmap}
Given a toric vector bundle $\E$, there exists a surjective map of $\C$-algebras
\[ \bigoplus_{t_1,\ldots,t_n \in \Z} \bigoplus_{s_i \in \Z_{\geq 0} | e_i \in \mathfrak{M}(\E)  } H^0(X_\Sigma, \OO( \sum_{i} s_i D_{e_i}+t_1 D_1+\ldots+t_n D_n)) \]
 \[ \to \bigoplus_{\substack{k \in \Z_{\geq 0}, \\ t_1,\ldots,t_n \in \Z}} H^0(X_\Sigma,S^k \E \otimes \OO(t_1D_1 + \ldots + t_n D_n)), \]
 where in each summand in the above  sum only finitely many $s_i$ are non-zero.
\end{proposition}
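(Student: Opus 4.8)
The plan is to build the homomorphism one graded piece at a time and then verify multiplicativity and surjectivity separately. Fix a finitely supported tuple $\mathbf{s} = (s_i)$ of nonnegative integers indexed by $\mathfrak{M}(\E)$ together with $\mathbf{t} = (t_1,\ldots,t_n) \in \Z^n$, and set $k = \sum_i s_i \deg e_i$ and $\LL = \OO(\sum_j t_j D_j)$. Let $w_{\mathbf{s}} = \prod_i e_i^{s_i} \in S^k E$ be the symmetric monomial in the fixed ground-set representatives. Applying the lemma that $D_{V_1} + \ldots + D_{V_s} = D_{V_1 \cdots V_s}$ to the lines $\langle e_i \rangle$ taken with multiplicity $s_i$ gives $D_{w_{\mathbf{s}}} = \sum_i s_i D_{e_i}$, so the summand indexed by $(\mathbf{s},\mathbf{t})$ is exactly $H^0(\OO(D_{w_{\mathbf{s}}}) \otimes \LL)$. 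By the very definition of $D_{w_{\mathbf{s}}}$ we have $w_{\mathbf{s}} \in (S^k E)^\rho(a_\rho)$ for every $\rho$, where $a_\rho$ is the coefficient of $D_\rho$ in $D_{w_{\mathbf{s}}}$; hence the linear map $\C \to S^k E$ sending $1 \mapsto w_{\mathbf{s}}$ respects the Klyachko filtrations, and by Klyachko's theorem it induces a bundle map $\OO(D_{w_{\mathbf{s}}}) \otimes \LL \to S^k \E \otimes \LL$. I define the map on this graded piece to be the induced map on global sections. For $\mathbf{s} = 0$ this is the identity $H^0(\LL) \to H^0(S^0\E \otimes \LL)$, that is, the pullbacks from the base.

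To see that these pieces assemble into a homomorphism of $\C$-algebras, recall that the source is the direct sum of the spaces $H^0(\OO(D_{w_{\mathbf{s}}}) \otimes \LL)$ with product induced by multiplication of sections on $X_\Sigma$, so it suffices to check compatibility with multiplication for each pair of multidegrees. Since $w_{\mathbf{s}+\mathbf{s}'} = w_{\mathbf{s}} \cdot w_{\mathbf{s}'}$ in $S^\bullet E$, and since the target multiplication $H^0(S^k\E \otimes \LL) \otimes H^0(S^{k'}\E \otimes \LL') \to H^0(S^{k+k'}\E \otimes \LL \otimes \LL')$ is symmetric multiplication of sections by the propositions preceding \cref{prop:commDiagram}, the required compatibility is precisely the commutativity recorded in \cref{prop:commDiagram}, extended from two factors to arbitrarily many by the same argument.

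For surjectivity I would treat each graded piece of the target separately: fix $k$ and $\LL = \OO(\sum_j t_j D_j)$ and show $H^0(S^k \E \otimes \LL)$ lies in the image. The parliament surjection $\bigoplus_{e \in M(S^k\E)} \OO(D_e) \to S^k\E$ is surjective on global sections, and by \cref{lemma:matroidtensorl} and the discussion following it this persists after tensoring with $\LL$; thus $H^0(S^k\E \otimes \LL)$ is spanned by the images of the maps $1 \mapsto e$ on $H^0(\OO(D_e)\otimes\LL)$ for $e \in M(S^k\E)$. By the inductive construction of $\mathfrak{M}(\E)$, every such $e$ is either a symmetric product $\prod_i e_i^{s_i}$ of elements of $\mathfrak{M}(\E)$ with $\sum_i s_i \deg e_i = k$, or is itself an element of $\mathfrak{M}(\E)$ of degree $k$; in both cases $e = w_{\mathbf{s}}$ for a suitable $\mathbf{s}$, and the map $1 \mapsto e$ on sections is exactly one of the graded pieces built above. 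Hence a spanning set of $H^0(S^k\E \otimes \LL)$ lies in the image, and surjectivity follows.

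The step I expect to be the main obstacle is the multiplicativity check. The maps $1 \mapsto w_{\mathbf{s}}$ are defined through the fixed matroid representatives and the (choice-dependent) parliament surjection, so their compatibility across all multidegrees is not formal: it rests on the explicit identification of the multiplication of global sections with symmetric multiplication of vectors, and on extending the two-factor diagram of \cref{prop:commDiagram} to products of arbitrarily many generators. Once that bookkeeping is in place, both the well-definedness of the map as a ring homomorphism and the surjectivity argument go through with the parliament surjection and the definition of $\mathfrak{M}(\E)$ doing the real work.
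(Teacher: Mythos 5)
Your proof is correct and takes essentially the same route as the paper's: the same map defined via Klyachko's equivalence by $1 \mapsto e_1^{s_1}\cdots e_m^{s_m}$, surjectivity deduced graded piece by graded piece from the twisted parliament surjection for $S^k\E$ (via \cref{lemma:matroidtensorl}) together with the inductive construction of $\mathfrak{M}(\E)$, and multiplicativity from \cref{prop:commDiagram}. Your explicit appeal to the lemma $D_{V_1}+\cdots+D_{V_s}=D_{V_1\cdots V_s}$ to identify each source summand with $H^0(\OO(D_{w_{\mathbf{s}}})\otimes\LL)$ only makes precise what the paper compresses into ``by the assumptions on the $e_i$.''
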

\begin{proof}
The first double sum is simply the ring of all sections  of all integral linear combinations  of the $T$-invariant divisors on $X_\Sigma$, as well as of all positive sums of divisors $D_{e}$, where $e \in \mathfrak{M}(\E)$.  Thus it is  a $\C$-algebra under multiplication of sections of line bundles on $X_\Sigma$. 

The map is defined as follows: We fix a summand, thus we pick $t_1,\ldots,t_n$ and some non-zero $s_i$, which we denote by $s_1, \ldots s_m$. We define $\LL_1$ as $\OO(\sum_i s_i D_{e_i})$ and $\LL_2$ as $\OO(\sum_i t_i D_i)$. We set $k=\sum_i s_i \deg e_i$. Then there is, by the assumptions on $e_i$ and Klyachko's equivalence of categories, a map of vector bundles
\[ \LL_1 \otimes \LL_2 \to S^k \E \otimes \LL_2, \]
given by sending $1 \in L_1 \otimes L_2 \simeq \C$ to $e_1^{s_1} \cdots e_m^{s_m}$. This in turn induces a map $H^0(X_\Sigma, \LL_1 \otimes \LL_2) \to  H^0(X_\Sigma,S^k \E \otimes \LL_2)$ which is the map in the statement above. We note that if $e_1^{s_1} \cdots e_m^{s_m}$ is in $M(S^k \E)$ then this map is the same as the corresponding summand in the map induced by the parliaments of polytopes construction.

Fix the numbers $k,t_1,\ldots,t_n$. They correspond to the bundle $\F = S^k \E \otimes \OO(t_1 D_1 + \ldots + t_n D_n)$. By \cref{lemma:matroidtensorl} we have that $M(\F) = M(S^k \E)$. All summands in the map
\[\bigoplus_{w \in M(\F)} H^0(X_\Sigma, \OO(D_w)) \to H^0(X_\Sigma,S^k \E \otimes \OO(t_1D_1 + \ldots + t_n D_n)) \]
induced by the parliament of polytopes for $S^k \E$ will appear in the big sum in the proposition statement, by the remark at the end of the preceding paragraph and by the construction of $\mathfrak{M}(\E)$. Thus the map is surjective for any graded piece $k,t_1,...,t_n$, hence it is surjective.

By Proposition \ref{prop:commDiagram} the map is compatible with the multiplication maps, hence the map is a map of $\C$-algebras.
\end{proof}

Motivated by the above we make the following definition.

\begin{definition}
For a natural number $k$ we define $S^k M(\E)$ as all symmetric $k$-products of vectors in the ground set of the matroid $M(\E)$. By $M(S^a \E) M(S^b \E)$ we mean all symmetric products $e_1e_2$ of vectors $e_1 \in M(S^a \E)$ and $e_2 \in M(S^b \E)$.
\end{definition}

Before coming to the characterization of being a Mori Dream Space we need the following technical lemmas.

\begin{lemma} \label{lemma:filtequal}
Assume that for  a natural number $c$ and  all integers $k_\rho$  we have the equality
\[ \Span_{a,b,j_\rho,l_\rho} \{ \cap_\rho S^a E^\rho (j_\rho) \cap_\rho S^b E^\rho(l_\rho) | j_\rho+l_\rho=k_\rho, a+b = c \} = \cap_\rho S^{c} E^\rho (k_\rho). \]
Then $M(S^{c} \E) \subset \cup_{a,b | a+b=c}  M(S^a \E) M(S^b \E)$.
\end{lemma}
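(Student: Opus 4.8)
The plan is to run the inductive construction of the matroid $M(S^c\E)$ recalled in \cref{posSection:parliament} and to show that at every stage the newly added ground-set vectors may be chosen to lie in $\cup_{a+b=c} M(S^a\E)M(S^b\E)$. The only external ingredient I need is the standing property of that construction: each intersection $\cap_\rho S^a E^\rho(j_\rho)$ is the span of the vectors of $M(S^a\E)$ that it contains.

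First I would rewrite the hypothesis in terms of matroid vectors. Fix integers $k_\rho$ and set $V=\cap_\rho S^c E^\rho(k_\rho)$. For each admissible pair with $a+b=c$ and $j_\rho+l_\rho=k_\rho$, the subspace $\cap_\rho S^a E^\rho(j_\rho)$ is spanned by those $e_1\in M(S^a\E)$ with $e_1\in\cap_\rho S^a E^\rho(j_\rho)$, and similarly $\cap_\rho S^b E^\rho(l_\rho)$ is spanned by such $e_2\in M(S^b\E)$. Since the juxtaposition in the hypothesis denotes the image of the tensor product under the multiplication $S^aE\otimes S^bE\to S^cE$, each product subspace $\cap_\rho S^a E^\rho(j_\rho)\,\cap_\rho S^b E^\rho(l_\rho)$ is spanned by the symmetric products $e_1e_2$. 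Combining this with the assumed equality, I conclude that $V$ is spanned by a set $\Pi_V$ of symmetric products $e_1e_2$, each lying in $\cup_{a+b=c} M(S^a\E)M(S^b\E)$.

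Next I would feed this into the matroid construction, by induction on $\dim V$. When processing $V$, let $G$ be the ground-set vectors already chosen that lie in $V$; the construction requires a basis of a complement $W$ to $\Span(G)$ inside $V$. Because $V=\Span(\Pi_V)$, the images of $\Pi_V$ under $V\to V/\Span(G)$ span the quotient, so I may select a subset $S'\subseteq\Pi_V$ whose images form a basis of $V/\Span(G)$; then $S'$ is linearly independent and $\Span(S')$ is a complement to $\Span(G)$ in $V$. Taking $W=\Span(S')$ with the basis $S'$ is a legitimate choice in the construction, and every element of $S'$ lies in $\cup_{a+b=c} M(S^a\E)M(S^b\E)$. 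Since any valid sequence of choices produces the uniquely determined matroid $M(S^c\E)$, this yields a ground set of $M(S^c\E)$ contained in $\cup_{a+b=c} M(S^a\E)M(S^b\E)$, which is the assertion.

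The linear-algebra selection of $S'$ and the translation in the first step are both routine; the one point requiring care is the bookkeeping that the chosen factors $e_1,e_2$ really are elements of $M(S^a\E)$ and $M(S^b\E)$ at the correct filtration levels, rather than arbitrary vectors of the ambient filtration pieces — this is precisely what the spanning property of the matroid construction supplies. I expect this verification, together with making the inductive choice of complement uniformly among products, to be the main (though mild) obstacle.
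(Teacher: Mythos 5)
Your proposal is correct and follows essentially the same route as the paper's proof: both use the spanning property of the matroids $M(S^a\E)$, $M(S^b\E)$ to conclude from the hypothesis that each intersection $\cap_\rho S^{c} E^\rho (k_\rho)$ is spanned by symmetric products of matroid vectors, and then run the inductive matroid construction of \cite[Algorithm 3.2]{DJS}, choosing the bases of the successive complements/quotients from among these products. Your write-up merely makes explicit the linear-algebra step (selecting $S'\subseteq\Pi_V$ mapping to a basis of $V/\Span(G)$) that the paper dispatches with ``by elementary linear algebra.''
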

\begin{proof}
The matroid $M(S^{c} \E)$ is constructed by \cite[Algorithm 3.2]{DJS} from the intersections $\cap_\rho S^{c} E^\rho (k_\rho)$. We are supposed to, in a certain order depending on the dimension of the intersection, choose a basis for the intersection, or of a quotient. By our assumption we know that we can pick a basis for each such intersection consisting of vectors of the form $v_1v_2$ where $v_1 \in M(S^a \E)$, $v_2 \in M(S^b \E)$. By elementary linear algebra we can also choose such a basis for each quotient. The proposition follows.
\end{proof}

\begin{lemma} \label{lemma:sectionnotmultiple}
Assume that for  a natural number $c$ and  fixed integers $k_\rho$  we have 
\[ \Span_{a,b,j_\rho,l_\rho} \{ \cap_\rho S^a E^\rho (j_\rho) \cap_\rho S^b E^\rho(l_\rho) | j_\rho+l_\rho=k_\rho, a+b=c \} \subsetneq \cap_\rho S^{c} E^\rho (k_\rho). \]
Then there exists a line bundle $\LL$ and $s \in H^0(S^{c} \E \otimes \LL)$ which is not in the image of the map $\bigoplus_{a,b,\LL_1,\LL_2 : \LL_1 \otimes \LL_2 = \LL} H^0(S^a \E \otimes \LL_1) \otimes H^0(S^b \E \otimes \LL_2) \to H^0(S^{c} \E \otimes \LL)$.
\end{lemma}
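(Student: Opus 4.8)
The plan is to transfer the hypothesis, which is a statement about Klyachko filtrations, into a statement about a single isotypic component of global sections, by choosing the line bundle $\LL$ so that its trivial-character piece realizes exactly the intersection $\bigcap_\rho (S^c E)^\rho(k_\rho)$. First I would set $\LL = \OO(-\sum_\rho k_\rho D_\rho)$, equipped with the $T$-equivariant structure coming from this $T$-invariant divisor. By Klyachko's description of isotypic components, $H^0(X_\Sigma, S^c\E\otimes\LL)_u = \bigcap_\rho (S^c E\otimes L)^\rho(\langle u,\rho\rangle)$, and the tensor-with-a-line-bundle identity $(S^c E\otimes L)^\rho(m) = (S^c E)^\rho(m+k_\rho)$ from \cref{lemma:matroidtensorl} gives, in the trivial character,
\[ H^0(X_\Sigma, S^c\E\otimes\LL)_0 = \bigcap_\rho (S^c E)^\rho(k_\rho), \]
which is precisely the right-hand side of the hypothesis.

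Next I would identify the degree-$0$ part of the image of the multiplication map with the left-hand side of the hypothesis. Since every map involved is $T$-equivariant, the image is a graded subspace, so a section lying in the isotypic component $H^0(S^c\E\otimes\LL)_0$ belongs to the image exactly when it lies in the degree-$0$ part of the image. For a summand indexed by $\LL_1,\LL_2$ with $\LL_1\otimes\LL_2\cong\LL$, I would choose $T$-invariant representatives $D^1=\sum_\rho d^1_\rho D_\rho$ and $D^2=\sum_\rho d^2_\rho D_\rho$ with $D^1+D^2 = -\sum_\rho k_\rho D_\rho$ exactly (any representative of $\LL_1$ forces one of $\LL_2$ by subtraction), so that the multiplication is the genuine $T$-equivariant product with no character shift. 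Combining the multiplication formulas proved above (the maps $s\otimes t\mapsto st$ into the symmetric product, and \cref{prop:commDiagram}) with the identification $H^0(S^a\E\otimes\LL_1)_{u_1} = \bigcap_\rho (S^a E)^\rho(\langle u_1,\rho\rangle - d^1_\rho)$, the degree-$0$ contribution of this summand is spanned by products $s\cdot t$ with $s\in\bigcap_\rho (S^a E)^\rho(j_\rho)$ and $t\in\bigcap_\rho (S^b E)^\rho(l_\rho)$, where $j_\rho=\langle u_1,\rho\rangle - d^1_\rho$, $l_\rho=\langle u_2,\rho\rangle - d^2_\rho$, and $u_1+u_2=0$. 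Then $j_\rho+l_\rho = \langle u_1+u_2,\rho\rangle - (d^1_\rho+d^2_\rho) = k_\rho$ for every $\rho$, so this contribution lies in the hypothesized span. Conversely, any pair $(j_\rho),(l_\rho)$ with $j_\rho+l_\rho=k_\rho$ is realized by taking $u_1=u_2=0$, $\LL_1=\OO(-\sum_\rho j_\rho D_\rho)$ and $\LL_2=\OO(-\sum_\rho l_\rho D_\rho)$, which satisfy $\LL_1\otimes\LL_2=\LL$. Hence the degree-$0$ part of the image equals
\[ \Span_{a,b,j_\rho,l_\rho}\left\{ \bigcap_\rho (S^a E)^\rho(j_\rho) \cdot \bigcap_\rho (S^b E)^\rho(l_\rho) \,\middle|\, j_\rho+l_\rho=k_\rho,\ a+b=c \right\}. \]

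Finally, the hypothesis says this span is strictly contained in $\bigcap_\rho (S^c E)^\rho(k_\rho) = H^0(S^c\E\otimes\LL)_0$, so any nonzero element of the complement is a section $s\in H^0(S^c\E\otimes\LL)$ not in the image, which proves the lemma (this is the companion to \cref{lemma:filtequal}, now run under the opposite, strict-containment, hypothesis). I expect the only real obstacle to be the equivariant bookkeeping in the middle paragraph: one must verify that the degree-$0$ image is neither strictly larger nor strictly smaller than the span, and for this it is essential to track how the character grading interacts with the choice of $T$-invariant representatives of $\LL_1,\LL_2$ and with the identification $\LL_1\otimes\LL_2\cong\LL$; choosing representatives that sum to $-\sum_\rho k_\rho D_\rho$ on the nose is what removes the spurious character shift and forces $j_\rho+l_\rho=k_\rho$. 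Everything else is a direct application of Klyachko's formula for $H^0(\cdot)_u$ and of the symmetric-multiplication formulas established earlier in this section.
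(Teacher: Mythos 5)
Your proposal is correct and follows essentially the same route as the paper's own proof: the same choice $\LL = \OO(-\sum_\rho k_\rho D_\rho)$ realizing $\bigcap_\rho (S^cE)^\rho(k_\rho)$ as $H^0(S^c\E\otimes\LL)_0$, the same identification of the degree-$0$ part of the image with the hypothesized span (the paper's character $v$/$-v$ shift and the substitution $j_\rho = j_\rho' + \langle v,\rho\rangle$, $l_\rho = l_\rho' - \langle v,\rho\rangle$ is exactly your equivariant bookkeeping with fixed $T$-invariant representatives), and the same conclusion from strict containment. Your version is just slightly more explicit about why the image is graded and why every pair $(j_\rho),(l_\rho)$ with $j_\rho + l_\rho = k_\rho$ is realized.
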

\begin{proof}
Let $W=\cap_\rho S^c E^\rho (k_\rho)$. We define $\LL=-D_W = \sum_\rho -k_\rho D_\rho$. Then
\[ H^0(S^c E \otimes \LL)_0 = \cap (S^c E \otimes L)^\rho(0) = \cap S^c \E^\rho(k_\rho) = W, \]
thus we obtain $W$ as the space of $T$-invariant global sections of $\E \otimes \LL$.
The image of 
\[ \bigoplus_{\substack{a,b,\LL_1,\LL_2 \\ \LL_1 \otimes \LL_2 = \LL}} H^0(S^a \E \otimes \LL_1) \otimes H^0(S^b \E \otimes \LL_2), \]
of degree $0$ can be described as follows:  First of all to map to the character $0$ we can pick any character $v$ in the first factor and $-v$ in the second factor, thus we get this sum equals 
\[ \bigoplus_{\substack{v,a,b,\LL_1,\LL_2 \\ \LL_1 \otimes \LL_2 = \LL}} H^0(S^a \E \otimes \LL_1)_v \otimes H^0(S^b \E \otimes \LL_2)_{-v}. \]
This is equal to the direct sum
\[ \bigoplus_{\substack{a,b,j_\rho',l_\rho' \\ j_\rho'+l_\rho'=k_\rho}}  \cap_\rho S^a E^\rho (j_\rho'+ \langle v,\rho \rangle ) \cap_\rho S^b E^\rho(l_\rho'- \langle v,\rho \rangle )  ,  \]
because varying  $j_\rho'$ and $l_\rho'$ corresponds exactly to varying the line bundles $\LL_1,\LL_2$. We set $j_\rho = j_\rho'+ \langle v,\rho \rangle$ and $l_\rho=l_\rho'- \langle v,\rho \rangle$. By the assumption  the image of this direct sum is contained in a space of smaller dimension than $W$, thus there must exist some $s \in W$, $s \notin \{ \cap_\rho S^a E^\rho (j_\rho) \cap_\rho S^b E^\rho(l_\rho) | j_\rho+l_\rho=k_\rho \}$. Hence we are done.
\end{proof}

\begin{remark}
The above proof shows that a linear space $V \subset E$ is in $L(\E)$ if and only if there exists a line bundle $\LL$ and a character $u$ such that $H^0(\E \otimes \LL)_u = V$.
\end{remark}

\begin{theorem} \label{theorem:criterion}
$\p(\E)$ is a Mori Dream Space if and only if the set $\mathfrak{M}(\E)$ is finite. Equivalently if and only if there exists an integer $c$ such that for any $k \geq c$ we have that
\[M(S^k \E) \subset \cup_{a+b=k} M(S^a \E) M(S^b \E). \]
\end{theorem}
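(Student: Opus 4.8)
The plan is to establish the equivalence in two stages. First I would show the equivalence of the two combinatorial conditions: finiteness of $\mathfrak{M}(\E)$ versus the existence of a stabilization index $c$ with $M(S^k \E) \subset \bigcup_{a+b=k} M(S^a \E)M(S^b\E)$ for all $k \geq c$. This is essentially a bookkeeping argument. By the very definition of $\mathfrak{M}(\E)$, a vector is added at step $k$ precisely when it lies in $M(S^k\E)$ but is not a symmetric product of vectors from lower symmetric powers already collected. So no new vectors are added at step $k$ exactly when $M(S^k\E)$ is generated by symmetric products of previously-collected vectors. I would argue that if $\mathfrak{M}(\E)$ is finite, it stabilizes after some $c$, and an inductive unwinding (reducing any symmetric product of already-collected vectors to a product of a degree-$a$ and a degree-$b$ matroid vector) gives the containment for all $k \geq c$; conversely the containment condition forces no new generators past $c$, hence finiteness.

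Second, and this is the heart of the matter, I would connect the combinatorial condition to $\p(\E)$ being a Mori dream space, via finite generation of $\Cox(\p(\E))$. The forward direction uses \cref{prop:surjRingmap}: if $\mathfrak{M}(\E)$ is finite, then the domain of the surjective $\C$-algebra map there is finitely generated over $\Cox(X_\Sigma)$ (which is a polynomial ring since $X_\Sigma$ is smooth and complete), because only finitely many divisors $D_{e_i}$ occur. A finitely generated algebra surjecting onto $\Cox(\p(\E))$ forces the latter to be finitely generated, so $\p(\E)$ is a Mori dream space. For the reverse direction I would argue contrapositively: if $\mathfrak{M}(\E)$ is infinite, then infinitely many $k$ fail the containment $M(S^k\E) \subset \bigcup_{a+b=k} M(S^a\E)M(S^b\E)$. \cref{lemma:sectionnotmultiple} converts each such failure into a genuine section $s \in H^0(S^c\E \otimes \LL)$ that is \emph{not} in the image of any multiplication map from lower symmetric powers—that is, a Cox-ring element not expressible as a product of lower-degree elements. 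I would need to promote this to an honest new algebra generator in each relevant degree, showing these cannot be eliminated, so $\Cox(\p(\E))$ requires infinitely many generators and hence is not finitely generated.

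The step I expect to be the main obstacle is precisely this reverse direction: turning ``there is a section not in the image of the multiplication maps'' into ``$\Cox(\p(\E))$ is not finitely generated.'' The subtlety is that \cref{lemma:sectionnotmultiple} produces, for each failing $k$, a section in a \emph{single} graded piece that is not a product of two lower-degree sections within that piece; but finite generation of the Cox ring is a global statement allowing products across all degrees and all the polynomial generators from the base. I would need to verify that the grading by $k$ (the symmetric-power degree, i.e.\ the $\OO_{\p(\E)}(1)$-degree) is respected by multiplication in a way that prevents a new indecomposable section at level $k$ from being built out of generators appearing at bounded level together with base sections. The cleanest way to handle this is to observe that \cref{prop:commDiagram} shows every product of sections on $\p(\E)$ lifts to a product of sections on $X_\Sigma$, so the image of the subalgebra generated by all sections of symmetric-power degree $< k$ (together with $\Cox(X_\Sigma)$) in degree $k$ is exactly the image of the multiplication maps analyzed in \cref{lemma:sectionnotmultiple}; a section outside that image is therefore a necessary new generator in degree $k$. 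Infinitely many such forced generators then preclude finite generation, closing the argument.

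I would finally remark that since $X_\Sigma$ is a smooth projective toric variety and $\p(\E)$ is then a smooth projective variety with finitely generated divisor class group, finite generation of $\Cox(\p(\E))$ is equivalent to $\p(\E)$ being a Mori dream space by the standard characterization, so no further work is needed to pass between the two formulations.
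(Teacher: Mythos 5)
Your second stage is, in substance, the paper's own proof: the forward implication is exactly \cref{prop:surjRingmap}, and the reverse is the paper's argument that each degree where the containment fails forces a genuinely new algebra generator, via \cref{lemma:sectionnotmultiple} together with the lifting of multiplication maps to $X_\Sigma$ (\cref{prop:commDiagram}); your resolution of what you call the main obstacle (regrouping any monomial in lower-degree elements into exactly two factors of positive complementary $\OO_{\p(\E)}(1)$-degree) is the same as the paper's. One step you skip: the hypothesis of \cref{lemma:sectionnotmultiple} is a strict inclusion of \emph{spans} of products of filtration intersections, not the failure of the matroid containment itself; the bridge between the two is (the contrapositive of) \cref{lemma:filtequal}, which the paper invokes at precisely this point and which your argument also needs.

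The genuine gap is in your first stage, where you call the equivalence of ``$\mathfrak{M}(\E)$ is finite'' and ``stabilization at some $c$'' a bookkeeping argument. Only one direction is: if such a $c$ exists then $\mathfrak{M}(\E)$ is finite (equivalently, $\mathfrak{M}(\E)$ infinite yields infinitely many failing $k$), because every matroid vector of a lower symmetric power is, by construction, a product of vectors already collected. Your converse ``inductive unwinding'' would fail: writing $v \in M(S^k \E)$ as a product $v_1 \cdots v_l$ of elements of $\mathfrak{M}(\E)$ and grouping it as $w_1 = v_1$, $w_2 = v_2 \cdots v_l$ does not exhibit $v$ as an element of $M(S^a \E) M(S^b \E)$, since a symmetric product of matroid vectors is in general \emph{not} a matroid vector: the ground-set construction of \cite{DJS} never adds a vector lying in the span of vectors chosen for smaller intersections (for instance, in a two-dimensional intersection containing two distinct one-dimensional intersections, only generators of those two lines can ever be ground-set elements, and $w_2$ may lie off both lines). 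This mismatch between $M(S^k \E)$ and products of matroid vectors is exactly the difficulty the whole paper is organized around (cf.\ \cref{example:big}). Accordingly, the paper proves only the easy direction directly and obtains ``$\mathfrak{M}(\E)$ finite $\Rightarrow$ $c$ exists'' as a consequence of the cycle: $\mathfrak{M}(\E)$ finite $\Rightarrow$ $\Cox(\p(\E))$ finitely generated $\Rightarrow$ $c$ exists, the last arrow being the contrapositive of the non-finite-generation argument. Your proof is repairable at no extra cost, because your stage-two contrapositive only ever uses ``infinitely many $k$ fail the containment'' rather than the flawed unwinding: delete the unwinding and arrange your three sound implications in a cycle, exactly as the paper does, and all the stated equivalences follow.
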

\begin{proof}

By the way we constructed $\mathfrak{M}(\E)$ it is clear that if such a $c$ exists then $\mathfrak{M}(\E)$ is finite.

Assume that $\mathfrak{M}(\E)$ is finite. Then by \cref{prop:surjRingmap} we have that $\Cox(\p(\E))$ is the image of the section ring of finitely many line bundles on $X_\Sigma$, hence it is finitely generated.

Lastly we will prove that if no such $c$ exists, then $\Cox(\p(\E))$ cannot be finitely generated. Since no such $c$ exist we must have an infinite sequence $c_1,c_2,c_3,\ldots$ satisfying
\[ M(S^{c_i} \E) \not \subset \cup_{a+b=c_i} M(S^a \E) M(S^b \E) \]
For any fixed such $c_i$ we see by  \cref{lemma:filtequal} that the assumptions of \cref{lemma:sectionnotmultiple} has to be satisfied for some $k_\rho^i$. Set $\LL_i = \sum_\rho -k_\rho^i D_\rho$. By \cref{lemma:sectionnotmultiple} there exists  a  section $s_i \in  H^0(S^{c_i} \E \otimes \LL_i)$ which cannot be in the algebra generated by sections of bundles of the form $S^k \E \otimes \LL$, for $k< c_i$. Thus to generate the Cox ring there has to be at least one generator which is a section of some $S^{c_i} \E \otimes \LL $. Since this is true for any $c_i$, of which there are infinitely many, there cannot be a finite generating set for $\Cox(\p(\E))$. This concludes the proof.
\end{proof}

\section{Examples of Cox rings} \label{posSection:exCoxRings}

In this sections we apply the results in the preceding section to certain classes of vector bundles. We begin with an easy observation:

\begin{proposition} \label{symmetricPowerMDS}
Assume that $M(S^k \E) \subset S^k M(\E)$ for all $k$. Then $\p(\E)$ is a Mori Dream Space.
\end{proposition}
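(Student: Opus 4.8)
The plan is to reduce the hypothesis $M(S^k \E) \subset S^k M(\E)$ for all $k$ to the finiteness criterion already established in \cref{theorem:criterion}. Recall that \cref{theorem:criterion} states that $\p(\E)$ is a Mori Dream Space if and only if there exists an integer $c$ such that for all $k \geq c$ we have $M(S^k \E) \subset \cup_{a+b=k} M(S^a \E) M(S^b \E)$. So the entire task is to verify this combinatorial inclusion from the stronger-looking symmetric-power hypothesis, and the most economical route is to produce the simplest possible value, namely $c=2$.

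First I would unwind the definition of $S^k M(\E)$: by the definition given just before this proposition, $S^k M(\E)$ consists of all symmetric $k$-fold products $e_1 e_2 \cdots e_k$ of vectors $e_i$ in the ground set of $M(\E)$. The key elementary observation is that any such product of length $k \geq 2$ factors as a product of a length-$a$ piece and a length-$b$ piece with $a+b=k$: writing $e_1 e_2 \cdots e_k = (e_1 \cdots e_a)(e_{a+1} \cdots e_k)$, the first factor is an element of $S^a M(\E)$ and the second of $S^b M(\E)$. Now $S^a M(\E)$ is, by the inductive construction of $\mathfrak{M}(\E)$ and the general theory, contained in the span of matroid vectors of $S^a \E$; more directly, each $e_1 \cdots e_a$ is a symmetric product of ground-set vectors of $M(\E)$, and such products are among the vectors used to build $M(S^a \E)$. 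I would make precise that $S^a M(\E) \subseteq M(S^a \E)$ only up to the usual caveat that the symmetric products may need to be completed to a full ground set; what is genuinely true and all that is needed is that every element of $S^a M(\E)$ lies in the span of $M(S^a \E)$, equivalently can be taken as one of the matroid vectors.

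The central step is therefore the chain of inclusions
\[ M(S^k \E) \subset S^k M(\E) = S^a M(\E)\, S^b M(\E) \subset M(S^a \E)\, M(S^b \E) \]
for any chosen splitting $a+b=k$ with $a,b \geq 1$, valid for every $k \geq 2$. The first inclusion is exactly the hypothesis; the middle equality is the factorization of symmetric products described above; and the last inclusion records that a product of a vector in $S^a M(\E) \subseteq M(S^a \E)$ with one in $S^b M(\E) \subseteq M(S^b \E)$ is by definition an element of $M(S^a \E) M(S^b \E)$. Taking the union over all splittings only enlarges the right-hand side, so $M(S^k \E) \subset \cup_{a+b=k} M(S^a \E) M(S^b \E)$ holds for all $k \geq 2$. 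Setting $c=2$ then satisfies the criterion of \cref{theorem:criterion}, and we conclude that $\p(\E)$ is a Mori Dream Space.

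The one point requiring care — and the only place where anything beyond bookkeeping happens — is the containment $S^a M(\E) \subseteq M(S^a \E)$ (or its span version). This is where the hypothesis really does all the work: in general the matroid does not commute with symmetric powers, so $M(S^a \E)$ can be strictly larger than $S^a M(\E)$, and without some control one cannot assert that a symmetric product of ground-set vectors is again a matroid vector of $S^a \E$. Here, however, the inductive construction of $M(S^a \E)$ begins precisely from the symmetric products of the ground-set vectors of $M(\E)$ before adjoining any further basis vectors, so $S^a M(\E)$ is automatically contained in the chosen ground set of $M(S^a \E)$. I would state this explicitly rather than treat it as obvious, since it is the crux of why the stronger hypothesis gives the clean value $c=2$.
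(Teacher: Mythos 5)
Your reduction to the second formulation of \cref{theorem:criterion} hinges on the inclusion $S^a M(\E) \subseteq M(S^a \E)$, and this is exactly where the argument breaks. The hypothesis gives the opposite containment, and nothing supplies this one. Your justification --- that the construction of $M(S^a \E)$ ``begins precisely from the symmetric products of the ground-set vectors of $M(\E)$'' --- misdescribes the construction: $M(S^a \E)$ is built intrinsically from the intersections $\cap_\rho (S^a E)^\rho(j_\rho)$ of the Klyachko filtrations of $S^a \E$, processed by increasing dimension, and new vectors are added to the ground set only when the vectors already chosen fail to span the intersection at hand; no reference to $M(\E)$ or to symmetric products is ever made. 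Consequently a product $e_1 \cdots e_a$ whose minimal containing intersection is already spanned by previously chosen vectors is never added, for any run of the construction, so the inclusion you need is not automatic and would at best require a separate proof. The fallback you offer does not repair this: since $S^a E$ is itself one of the intersections, the ground set $M(S^a \E)$ spans all of $S^a E$, so ``every element of $S^a M(\E)$ lies in the span of $M(S^a \E)$'' is vacuously true and is certainly not equivalent to ``can be taken as one of the matroid vectors''; and \cref{theorem:criterion} needs genuine membership, because $M(S^a \E) M(S^b \E)$ is by definition the set of symmetric products of actual matroid vectors, not of elements of their span.

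The gap is avoidable, and the paper's (one-line) proof does avoid it, by using the \emph{first} formulation of \cref{theorem:criterion}: $\p(\E)$ is a Mori Dream Space if and only if $\mathfrak{M}(\E)$ is finite. In the inductive construction of $\mathfrak{M}(\E)$, step $k \geq 2$ adds only those vectors of $M(S^k \E)$ which \emph{cannot} be written as a symmetric product of vectors already in $\mathfrak{M}(\E)$. Under your hypothesis every $v \in M(S^k \E)$ lies in $S^k M(\E)$, i.e.\ is a symmetric product of vectors of $M(\E)$, all of which belong to $\mathfrak{M}(\E)$ after step $1$; hence nothing is added at any step $k \geq 2$, so $\mathfrak{M}(\E) = M(\E)$, which is finite (finitely many rays and finitely many jumps in each filtration give finitely many intersections, each contributing finitely many vectors). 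The theorem then applies directly. Note that this argument uses the hypothesis only in the direction in which it is stated --- expressibility of matroid vectors of $S^k \E$ as products --- and never needs any containment of the form $S^a M(\E) \subseteq M(S^a \E)$.
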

\begin{proof}
This follows from Theorem \ref{theorem:criterion}.
\end{proof}

\begin{proposition} \label{prop:symPower1dim}
Assume that $\E$ is a toric vector bundle of rank $r$ such that all subspaces appearing in the filtrations are either of dimension $0,1,r$. Then $M(S^k \E) \subset S^k(M(\E))$, and in particular $\p(\E)$ is a Mori Dream Space.
\end{proposition}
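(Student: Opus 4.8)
The plan is to exploit the hypothesis to pin down the Klyachko filtrations of every symmetric power explicitly, and then reduce the matroid-theoretic claim to a statement about principal ideals in the polynomial ring $S^\bullet E$.

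First I would describe the filtration of $\E$ itself. Along any ray $\rho$ the subspaces $E^\rho(j)$ form a decreasing chain whose only members have dimension $0$, $1$ or $r$; hence there is a single line $\ell_\rho \subset E$ and integers $a_\rho \leq b_\rho$ with $E^\rho(j)=E$ for $j\leq a_\rho$, $E^\rho(j)=\ell_\rho$ for $a_\rho<j\leq b_\rho$, and $E^\rho(j)=0$ for $j>b_\rho$. Feeding this into \cref{prop:filsympower}, and choosing a basis of $E$ whose first vector spans $\ell_\rho$, a direct computation of $\sum_{j_1+\cdots+j_k=j}\mathrm{Im}(E^\rho(j_1)\otimes\cdots\otimes E^\rho(j_k)\to S^kE)$ shows that every summand equals $\ell_\rho^m\,S^{k-m}E$, the monomials divisible by $\ell_\rho^m$; since these are nested the sum is the largest one. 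Thus $(S^kE)^\rho(j)=\ell_\rho^{\,t}\,S^{k-t}E$ for a suitable $t=t(\rho,j)\in\{0,\dots,k\}$.

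Next I would analyze the intersections from which $M(S^k\E)$ is built. Let $\ell_1,\dots,\ell_p$ be the distinct lines occurring among the $\ell_\rho$, spanned by matroid vectors $v_1,\dots,v_p\in M(\E)$, and let $v_{p+1},\dots,v_q$ be the remaining matroid vectors, so that $v_1,\dots,v_q$ span $E$. Any intersection $\bigcap_\rho (S^kE)^\rho(j_\rho)$ becomes, after grouping the rays by their line and keeping the largest exponent $s_i$ for each line, an intersection $\bigcap_{i\in I} v_i^{s_i}\,S^{k-s_i}E$ of the degree-$k$ parts of the principal ideals $(v_i^{s_i})$ in the polynomial ring $S^\bullet E$. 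This is where the hypothesis does the real work: the $v_i$ span \emph{distinct} lines, so the linear forms $v_i$ are pairwise non-proportional, hence pairwise coprime in the UFD $S^\bullet E$; therefore $\bigcap_{i\in I}(v_i^{s_i})=\big(\prod_{i\in I} v_i^{s_i}\big)$. Taking degree-$k$ parts gives
\[ \bigcap_\rho (S^kE)^\rho(j_\rho)\;=\;\Big(\textstyle\prod_{i\in I} v_i^{s_i}\Big)\cdot S^{\,k-|s|}E, \]
with $|s|=\sum_{i\in I} s_i$ (and $0$ if $|s|>k$). Since $S^{\,k-|s|}E$ is spanned by monomials in $v_1,\dots,v_q$, this intersection is spanned by symmetric products $v_{j_1}\cdots v_{j_k}$ of $k$ matroid vectors of $\E$. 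I expect this principal-ideal step to be the crux: it is exactly the place where the matroid fails to commute with symmetric powers in general, and the dimension hypothesis rescues us by forcing every filtration piece of $S^k\E$ to be a power of a line, so that distinctness of the $\ell_i$ yields coprimality.

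Finally I would run the matroid construction algorithm of \cite{DJS} for $S^k\E$. Both $S^kE$ and every intersection $\bigcap_\rho (S^kE)^\rho(j_\rho)$ are spanned by symmetric $k$-products of matroid vectors of $\E$; hence at each step of the inductive construction a basis, or a complementary basis to the span of previously chosen vectors, can be selected from among such products. This produces a ground set for $M(S^k\E)$ contained in the symmetric $k$-fold products of $M(\E)$, that is $M(S^k\E)\subset S^k M(\E)$ for every $k$. By \cref{symmetricPowerMDS} this shows $\p(\E)$ is a Mori dream space, completing the proof.
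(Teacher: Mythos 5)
Your proposal is correct and takes essentially the same route as the paper's proof: both compute the Klyachko filtration of $S^k \E$ explicitly from the dimension-$0,1,r$ hypothesis, identify each intersection $\cap_\rho (S^k E)^\rho(j_\rho)$ with the degree-$k$ polynomials divisible by prescribed powers of the linear forms $v_i$, and use unique factorization (which the paper phrases as hypersurfaces vanishing to given orders along the hyperplanes $V(v_i)$) to conclude that these spaces are spanned by symmetric products of matroid vectors of $\E$. If anything, your write-up is slightly more careful than the paper's at two points: you account for the additional matroid vectors needed when the lines $\ell_\rho$ do not span $E$, and you group rays sharing the same line before invoking coprimality.
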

\begin{proof}
It is clearly sufficient to consider only the rays which have a one-dimensional space in the filtration. Let $v_1,\ldots,v_s$ be the vectors in these one-dimensional spaces, thus $M(\E)= \{v_1,\ldots,v_s \}$.

Let the filtration on ray $\rho_i$ be given by 
\[ E^i(j) = \begin{cases} 
E &\text{ if } j \leq a_i \\
v_i &\text{ if } a_i < j \leq b_i \\
0 &\text{ if } \hspace{5 pt} b_i < j
\end{cases}. \]
Then the filtration of $S^k \E$ is given by
\[ S^k E^i(j) = \begin{cases} 
S^k E &\text{ if } j \leq ka_i \\
v_iw&\text{ if } ka_i < j \leq (k-1)a_i+b_i \\
\vdots \\
v_i^{k-1}w &\text{ if } 2a_i+(k-2)b_i < j \leq a_i+(k-1)b_i \\
v_i^k &\text{ if } a_i+(k-1)b_i < j \leq kb_i \\
0 &\text{ if } kb_i < j
\end{cases}, \]
where in each step $w$ can be any vector not in the span of $v_i$. Thus to compute the matroid we need to consider the spaces
\[ \cap_i S^k E^i(j_i) = \{ f=v_i^{l_i}z_i \in S^k E \text{ where }  (k-l_i+1)a_i+(l_i-1)b_i < j_i \leq (k-l_i)a_i+l_ib_i \}, \]
where, for each $i$, $z_i$ can be any vector in $S^{k-l_i} E$. If we can show that each such space is spanned by symmetric powers of $\{v_1,\ldots,v_s\}$ then we are done. We see that an $f$ in this intersection corresponds to a hypersurface $f$ of degree $k$ in $\p(E)$ vanishing to order $l_i$ at the hyperplane $H_i=V(v_i)$. But if $f$ vanishes to order $l_i$ at $H_i$ then $f=v_i^{l_i}g$ for some hypersurface $g$ of degree $k-l_i$. Iterating we obtain that $f=v_1^{l_1} v_2^{l_2} \cdots v_s^{l_s}$, which implies exactly what we wish to prove.

\end{proof}

The above give new proofs of Theorem 5.7 and Theorem 5.9 in \cite{HS}:

\begin{corollary}
If $\E$ has rank $2$ or if $\E$ is a tangent bundle, then $\p(\E)$ is a Mori Dream Space.
\end{corollary}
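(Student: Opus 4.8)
The plan is to reduce both cases to \cref{prop:symPower1dim}, which asserts that a rank $r$ toric vector bundle all of whose Klyachko filtration subspaces have dimension $0$, $1$, or $r$ satisfies $M(S^k \E) \subset S^k(M(\E))$ for all $k$ and hence has $\p(\E)$ a Mori Dream Space. Thus in each case it suffices to verify that the only subspaces $E^\rho(j)$ occurring in the filtrations have dimension $0$, $1$, or $r$.

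The rank $2$ case is immediate. Here $r = 2$, so $E \simeq \C^2$ and every linear subspace of $E$ has dimension $0$, $1$, or $2 = r$. In particular each filtration step $E^\rho(j)$ automatically satisfies the hypothesis of \cref{prop:symPower1dim}, and the conclusion follows at once. No computation specific to the bundle is needed, since the constraint is purely dimensional.

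For the tangent bundle, I would first recall its Klyachko description. If $X_\Sigma$ is smooth and complete of dimension $n$, then $T_{X_\Sigma}$ is a toric vector bundle of rank $r = n$ with fiber $E = N \otimes_\Z \C$ at the identity, and for each ray $\rho$ with primitive generator $v_\rho$ its filtration is the two-step flag
\[ E^\rho(j) = \begin{cases} N_\C & j \leq -1, \\ \C \cdot v_\rho & j = 0, \\ 0 & j \geq 1. \end{cases} \]
The only subspaces appearing are therefore $N_\C$, the line $\C \cdot v_\rho$, and $0$, of dimensions $n = r$, $1$, and $0$ respectively. This again matches the hypothesis of \cref{prop:symPower1dim}, so $\p(T_{X_\Sigma})$ is a Mori Dream Space.

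The only genuine content is the tangent-bundle step, and the main obstacle there is simply to pin down the Klyachko filtration of $T_{X_\Sigma}$ correctly in the sign/indexing convention adopted in this paper. Once that flag is identified, the argument is formal: the filtration is a single line inside the full space at each ray, so only the admissible dimensions $0$, $1$, $r$ occur, and \cref{prop:symPower1dim} applies verbatim. I expect no difficulty in the rank $2$ case, which is entirely dimensional.
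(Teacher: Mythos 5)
Your proposal is correct and is exactly the argument the paper intends: the corollary is stated as an immediate consequence of \cref{prop:symPower1dim}, with the rank-$2$ case being purely dimensional and the tangent-bundle case following from the standard fact that the Klyachko filtration of $T_{X_\Sigma}$ at each ray $\rho$ has the line spanned by the primitive generator $v_\rho$ as its only intermediate subspace (the precise jump indices being a matter of convention and irrelevant to the hypothesis of \cref{prop:symPower1dim}).
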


\section{A presentation of the Cox ring} \label{posSection:coxPresentation}

We here give a presentation of the Cox ring of $\p(\E)$, in terms of generators and relations, defined from the matroids $M(S^k \E)$. The essential idea is that the relations between matroid vectors in $M(S^k \E)$ correspond to all relations between sections of line bundles of the same degree $\OO_{\p(\E)}(k)$. Moreover all relations between sections which are products of sections of different $\OOO$-degree can be described in terms of the difference between $S^k M(\E)$ and $M(S^k \E)$. Using this we obtain a presentation of the Cox ring of any projectivized toric vector bundle.

To each ray $\rho \in \Sigma(1)$ we associate a variable $S_\rho$. These correspond to pullbacks of all generators for $\Cox(X_\Sigma)$ and will be some of the generators of $\Cox(\p(\E))$.   To each vector $v \in \mathfrak{M}(\E)$ we associate a variable $T_v$. Each $T_v$ will also be a generator of $\Cox(\p(\E))$. The variable $T_v$ correspond to the following element in $\Cox(\p(\E))$: Write $D_v = \sum_\rho a_\rho D_\rho$. Consider the bundle $S^{\deg v} \E \otimes \OO(-D_v)$. Then $v$ will be in $M(S^{\deg v} \E \otimes \OO(-D_v))$ under the isomorphism $E \otimes \C \simeq E$, by \cref{lemma:matroidtensorl}. The associated divisor $D'_v$ is the trivial divisor, hence it has a unique global section. Its image in $H^0(X_\Sigma,S^{\deg v} \E \otimes \OO(-D_v))$ under the map given by the parliament of polytopes is the section which we label $T_v$.

We will show that $S_\rho$ and $T_v$ are all generators of $\Cox(\p(\E))$. Moreover we will also describe the ideal of relations between these generators. It is a sum of two ideals $I$ and $J$, which we now introduce.

Let $\Rel(S^k \E)$ denote the space of linear relations between vectors in $M(S^k \E)$. For any relation $r = \sum_i \lambda_i v_i$ in $\Rel(S^k \E)$ we associate the divisor 
\[ D_r = \sum_\rho \min_i \{ \alpha_{\rho i } | \lambda_i \neq 0 \} D_\rho \defeq \sum_\rho \beta_\rho^r D_\rho. \]
Associated to relations between matroid vectors for a fixed symmetric power $k$ we get relations in the Cox ring given by the ideal
\[ I= \big\langle \sum_i \lambda_i S^i_r T_{v_i} | r=\sum \lambda_i v_i \in \Rel(S^k \E) \big \rangle, \]
where $S_r^i = \prod_\rho S_\rho^{\alpha_{\rho i} - \beta_\rho^r}$.

The reason the polynomials in the ideal $I$ give relations in the Cox ring is the following: For simplicity we consider what happens  when $k=1$, in other words for $M(\E)$. Any $v \in M(\E)$ defines a divisor $D_v$ and thus a polytope $P_v$ giving global sections. This polytope could be empty or full-dimensional or anything in between. However if we consider $\F = \E \otimes \OO(-D_v)$ then by construction $v \in M(\F)$ and the associated divisor $D_v'$ is the trivial divisor. In particular it has a unique global section. 

If we now fix a non-trivial relation $r = \sum_i \lambda_i v_i$ in $\Rel(\E)$, we have for any $v_i$ a unique associated global section. These are the sections corresponding to the variables $T_{v_i}$. Multiplying $T_{v_i}$ with a monomial in the $S_\rho$ corresponds to tensoring with effective divisors which are pull-backs from the base. In particular, the polytope will become bigger. The relations in the ideal $I$ are relations obtained from multiplying each variable with  monomials in $S_\rho$, such that the polytopes corresponding to $v_i$ overlap. As soon as they overlap, there must be a relation between the corresponding sections by \cref{proposition:dimglobalchru}.

\begin{example} \label{example:coxtp2} 
Consider the projectivization of the tangent bundle $T_{\p^2}$ of $\p^2$. We denote by $D_0,D_1,D_2$ the $T$-invariant divisors.  The parliament of polytopes is shown in \cref{figure:parliamentTP22}. The matroid has three divisors, equal to  $D_0,D_1$ and $D_2$. Thus there are three $T$-variables in $\Cox(\p(T_{\p^2}))$: $T_0,T_1,T_2$. $T_i$ is the unique section of $\OO_{T_{\p^2}}(1) -D_i$ of weight $0$.

Since the three matroid vectors are linearly dependent, we know that there will be a relation in the Cox ring involving $T_0,T_1,T_2$. The relation will be given by multiplying up with $S_i$ variables (corresponding to the $D_i$ as pullbacks of divisors from the base) until the three polytopes overlap and are of the same degree in $\Pic(\p(T_{\p^2}))$. This is in this case exactly of degree $\OO_{T_{\p^2}}(1)$ and is shown in the image. Thus the associated relation is
\[ T_0S_0 + T_1S_1+T_2S_2.\]
\end{example}

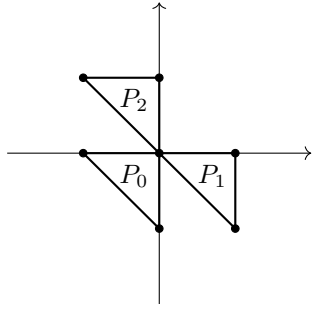
\begin{figure}
\centering
\begin{tikzpicture} [scale=1]
\node[draw,circle,inner sep=1pt,fill] at (0,0) {};
\node[draw,circle,inner sep=1pt,fill] at (1,0) {};
\node[draw,circle,inner sep=1pt,fill] at (1,-1) {};
\node[draw,circle,inner sep=1pt,fill] at (0,1) {};
\node[draw,circle,inner sep=1pt,fill] at (-1,1) {};
\node[draw,circle,inner sep=1pt,fill] at (-1,0) {};
\node[draw,circle,inner sep=1pt,fill] at (0,-1) {};
\draw [->] (0,0) -- (2,0);
\draw [->] (0,0) -- (0,2);
\draw [-] (0,0) -- (-2,0);
\draw [-] (0,0) -- (0,-2);
\draw [thick,-] (0,0) -- (1,0);
\draw [thick,-] (0,0) -- (1,-1);
\draw [thick,-] (1,0) -- (1,-1);

\node at (0.70,-0.3) [] {$P_1$};
\node at (-0.33,-0.3) [] {$P_0$};
\node at (-0.33,0.7) [] {$P_2$};

\draw [thick,-] (0,0) -- (0,1);
\draw [thick,-] (0,0) -- (-1,1);
\draw [thick,-] (0,1) -- (-1,1);

\draw [thick,-] (0,0) -- (-1,0);
\draw [thick,-] (0,0) -- (0,-1);
\draw [thick,-] (0,-1) -- (-1,0);
\end{tikzpicture}
\caption{The 3 polytopes of the parliament of $T_{\p^2}$.}
 \label{figure:parliamentTP22}
\end{figure}

\begin{remark}
Any $v \in M(S^k \E)$ is either in $\mathfrak{M}(\E)$ or is a symmetric product of elements in $\mathfrak{M}(\E)$. If it is the latter then by the letter $T_v$ we denote the monomial $\prod_{i=1}^l T_{v_i}$ where $v$ equals the symmetric product $v_1 \cdots v_l$, with $v_i \in \mathfrak{M}(\E)$. Thus, for any $v \in M(S^k \E)$ the symbol $T_v$ correspond to a distinguished element in $\Cox(\p(\E))$.
\end{remark}

In addition to the relations in $I$, we also get relations between generators of different degrees. Fix $v_1 \in M(S^a \E)$ and $v_2 \in M(S^b \E)$ and consider the associated sections: They are $T_{v_1} \in H^0(S^a \E \otimes \OO(-D_{v_1}))_0$ and $T_{v_2} \in H^0(S^b \E \otimes \OO(-D_{v_2}))_0$, respectively. Then the product $T_{v_1}T_{v_2}$ will lie in $H^0(S^{a+b} \E \otimes \OO(-D_{v_1} -D_{v_2}))_0$. Choosing a basis  $\{ w_1,\ldots,w_s\} \subset M(S^{a+b} \E \otimes \OO(-D_{v_1} -D_{v_2}))$  of the latter space, we can write $v_1v_2 = \sum a_i w_i$. Then we get the relation of sections 
\[ T_{v_1} T_{v_2} = \sum_i a_i T_{w_i} S_{w_i}, \]
where $T_{w_i} \in H^0(S^{a+b} \otimes \OO(-D_{w_i}))_0$ and $S_{w_i}$ is defined as follows: We can write $D_{w_i}-D_{v_1}-D_{v_2} = \sum m_\rho D_\rho$. By assumption this is an effective divisor (since $v_1v_2$ lies in the intersection spanned by $w_i$) thus we can define $S_{w_i} = \prod_\rho S_\rho^{m_\rho}$. We let $J$ be the ideal generated by all relations such as this.  The ideal $J$ in some sense measures the difference between $M(S^k \E)$ and $S^k M(\E)$; if $(S^k \E)=S^k M(\E)$ for all $k$, then the ideal $J$ is empty.

Even though we defined the generators of $J$ in terms of symmetric products of two matroid vectors, the following lemma shows that they imply that the analogous relations for symmetric products of arbitrarily many vectors lies in $J$.

\begin{lemma}
Given $v_1,\ldots,v_p$ with $v_i \in M(S^{a_i} \E)$ and let $\{w_1,\ldots,w_s\}$ be a basis of $H^0(S^{\sum a_i} \E \otimes \OO(\sum_i -D_{v_i}))_0$ and write $v_1 \cdots v_p = \sum b_i w_i$. Then the relation
\[ \prod_i T_{v_i}-\sum_i b_i T_{w_i}S_{w_i} \]
lies in $J$, where $S_{w_i}$ is some monomial in $S_\rho$, defined as above.
\end{lemma}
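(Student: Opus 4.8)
The plan is to prove the statement by induction on the number of factors $p$. For $p=1$ the vector $v_1$ is itself a matroid vector of $M(S^{a_1}\E)$, the twisted space $H^0(S^{a_1}\E\otimes\OO(-D_{v_1}))_0$ is one-dimensional with distinguished generator $v_1$, and the assertion is the tautology $T_{v_1}-T_{v_1}\in J$. For $p=2$ the asserted relation is, by construction, one of the generators of $J$, so there is nothing to prove. Hence all the content lies in the inductive step.

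For the inductive step I would factor $v_1\cdots v_p=(v_1\cdots v_{p-1})\cdot v_p$ and apply the induction hypothesis to the first $p-1$ factors: choosing a basis $\{u_j\}$ of $H^0(S^{a_1+\cdots+a_{p-1}}\E\otimes\OO(-D_{v_1}-\cdots-D_{v_{p-1}}))_0$ and writing $v_1\cdots v_{p-1}=\sum_j c_j u_j$, I obtain $\prod_{i<p}T_{v_i}\equiv\sum_j c_j S_{u_j}T_{u_j}\pmod J$. Since $J$ is an ideal, multiplying by $T_{v_p}$ preserves the congruence, so $\prod_i T_{v_i}\equiv\sum_j c_j S_{u_j}\,T_{u_j}T_{v_p}\pmod J$. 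I would then rewrite each product $T_{u_j}T_{v_p}$ with the two-factor generators of $J$: choosing a basis $\{w_{jk}\}_k$ of $H^0(S^{\sum a_i}\E\otimes\OO(-D_{u_j}-D_{v_p}))_0$ and writing $u_j v_p=\sum_k d_{jk}w_{jk}$ gives $T_{u_j}T_{v_p}\equiv\sum_k d_{jk}T_{w_{jk}}S_{w_{jk}}\pmod J$. Substituting yields $\prod_i T_{v_i}\equiv\sum_{j,k}c_j d_{jk}\,S_{u_j}S_{w_{jk}}T_{w_{jk}}\pmod J$.

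The routine bookkeeping to carry out next is to check that this expression has the claimed shape. Since $u_j$ produces a $T$-invariant section after the twist by $-(D_{v_1}+\cdots+D_{v_{p-1}})$ we have $D_{u_j}\ge\sum_{i<p}D_{v_i}$, and likewise $D_{w_{jk}}\ge D_{u_j}+D_{v_p}\ge\sum_i D_{v_i}$; hence each $w_{jk}$ lies in $W:=H^0(S^{\sum a_i}\E\otimes\OO(-\sum_i D_{v_i}))_0$, and the monomials combine correctly, $S_{u_j}S_{w_{jk}}=\prod_\rho S_\rho^{(D_{w_{jk}}-\sum_i D_{v_i})_\rho}$, i.e. the monomial attached to $w_{jk}$ relative to the total twist. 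Moreover $\sum_{j,k}c_j d_{jk}w_{jk}=\sum_j c_j(u_j v_p)=\big(\sum_j c_j u_j\big)v_p=v_1\cdots v_p$, so the right-hand side represents precisely the vector $v_1\cdots v_p\in W$; here one uses the divisor-of-a-product identity ($D_{v_1}+\cdots+D_{v_p}=D_{v_1\cdots v_p}$) to see that $v_1\cdots v_p$ sits at the bottom level of $W$, with divisor exactly $\sum_i D_{v_i}$.

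The main obstacle is the final reconciliation: the matroid vectors $\{w_{jk}\}$ produced by the construction span (a subspace of) $W$ and represent $v_1\cdots v_p$, but they need not be the prescribed basis $\{w_1,\dots,w_s\}$, so I must show that two matroid-vector representations of the single vector $v_1\cdots v_p$ are congruent modulo $J$. I would first invoke the defining property of $M(S^{\sum a_i}\E)$ from \cite[Algorithm 3.2]{DJS} that every intersection $\cap_\rho (S^{\sum a_i}E)^\rho(j_\rho)$ — in particular $W$ and each $H^0(S^{\sum a_i}\E\otimes\OO(-D_{u_j}-D_{v_p}))_0$ — is spanned by ground-set vectors, so all the bases above, as well as $\{w_l\}$, may be taken inside the ground set. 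The passage between two such expansions is then a same-degree rewriting, which I would realize inside $J$ in two ways: comparing the two-factor generator for a fixed product $T_{y_1}T_{y_2}$ written in two different bases shows that any two expansions of $y_1y_2$ are $J$-congruent, and applying the two-factor generators with the trivial factor $\mathbf 1\in M(S^0\E)=M(\OO_{X_\Sigma})$ (so that $T_{\mathbf 1}=1$) to a ground-set vector whose associated space is all of $W$ yields exactly a change-of-basis relation $T_y-\sum_l d_l T_{w_l}S_{w_l}\in J$ inside $W$. Arranging these rewrites so that one never needs a genuine relation of the ideal $I$ — which is where the minimal-divisor position of $v_1\cdots v_p$ in $W$ and a careful tracking of the monomials $S_\rho$ become essential — is the delicate heart of the argument, and is the step I expect to demand the most care.
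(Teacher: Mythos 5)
Your core argument is exactly the paper's proof: the paper treats the case $p=3$ by precisely your iteration --- expand $v_1v_2$ by a two-factor generator of $J$, multiply the congruence by $T_{v_3}$, expand each product $T_{z_k}T_{v_3}$ by another two-factor generator, then check that the monomials combine correctly (via $(D_{z_k}-D_{v_1}-D_{v_2})+(D_{w_i}-D_{z_k}-D_{v_3})=D_{w_i}-D_{v_1}-D_{v_2}-D_{v_3}$) and that the coefficients satisfy $b_i=\sum_k c_k d_{k,i}$ --- and then says the general case ``follows by iterating the process,'' which is your induction. Your bookkeeping paragraph reproduces this verbatim.

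Where you diverge is your last paragraph: the basis-reconciliation issue you call the ``delicate heart'' is simply absent from the paper's proof, because the paper writes the second-stage expansion $z_kv_3=\sum_j d_{k,j}w_j$ directly in the prescribed basis $\{w_j\}$ and treats $T_{z_k}T_{v_3}-\sum_j d_{k,j}T_{w_j}S_{k,j}$ as a generator of $J$. In other words, the paper implicitly reads the definition of $J$ as allowing, for each pair of matroid vectors, the relation with respect to an arbitrary matroid-vector basis of the target space; under that reading your obstacle dissolves and no change-of-basis step is needed. Be aware, though, that your proposed fix via the trivial factor $\mathbf{1}\in M(S^0\E)$ is not licensed by the paper's stated definition of $J$ (the factors there are matroid vectors of $S^a\E$ with $a\geq 1$), and that even the paper's implicit reading quietly assumes the monomials $S_{k,j}$ are effective, i.e.\ $D_{w_j}\geq D_{z_k}+D_{v_3}$ whenever $d_{k,j}\neq 0$ --- which is exactly the compatibility you worry about. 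Any residual discrepancy between two expansions of the same vector is a same-degree linear relation among matroid vectors, hence lies in $I$; this is harmless for \cref{theorem:coxpresenetation}, where everything is taken modulo $I+J$, but it does mean that, read strictly, neither your argument nor the paper's establishes membership in $J$ alone without the permissive reading of $J$'s generators. Finally, one small slip: your $p=1$ base case is false in general, since $H^0(S^{a_1}\E\otimes\OO(-D_{v_1}))_0$ can have dimension greater than one; this is inessential, as your induction can simply start at $p=2$.
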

\begin{proof}
We will prove this for $p=3$; the general case follows by iterating the process. Thus we assume that $v_1v_2v_3 = \sum b_iw_i$, where $w_i \in M(S^3 \E)$.

Let $v_1v_2 = \sum c_k z_k$, where $z_k \in M(S^2 \E)$, thus we have an associated relation in $J$
\[ T_1T_2 = \sum_k c_k T_{z_k}S_{z_k}. \]
Multiplying the above by $T_3$ we get
\[ T_1T_2T_3 = \sum_k c_k T_{z_k}T_3S_{z_k}. \]
For a fixed $k$ write $z_kv_3= \sum d_{k,j} w_j$, thus we have an associated relation
\[ T_{z_k}T_3 = \sum_j d_{k,j} T_{w_j} S_{k,j}. \]
Substituting this into the above we get
\[ T_1T_2T_3 = \sum_k c_k S_{z_k} \sum_j d_{k,j} T_{w_j} S_{k,j} = \sum_j T_{w_j}(\sum_k c_k d_{k,j} S_{z_k} S_{k,j}). \]
We have that by construction $S{z_k}$ corresponds to the effective divisor $D_{z_k}-D_{v_1}-D_{v_2}$ and $S_{k,i}$ corresponds to $D_{w_i}-D_{z_k}-D_{v_3}$, hence their product $S_{z_k}S_{k,i}$ corresponds to the sum $D_{w_i}-D{v_1}-D_{v_2}-D_{v_3} = S_{w_i}$. Observe also that by combining the expressions for $v_1v_2v_3$ we see that $b_i = \sum_k c_k d_{k,i}$, thus the relation above simplifies to
\[ T_1T_2T_3 = \sum_i b_i T_{w_i}S_{w_i}, \]
which is what we wanted to show.
\end{proof}

\begin{theorem} \label{theorem:coxpresenetation}
The Cox ring of $\p(\E)$ is
\[ \C[S_\rho,T_v |  / (I+J). \]
\end{theorem}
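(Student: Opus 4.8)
The plan is to establish the isomorphism
\[ \Cox(\p(\E)) \simeq \C[S_\rho, T_v] / (I+J) \]
by exhibiting a surjective ring homomorphism from the polynomial ring and then proving that its kernel is exactly $I+J$. The surjection is essentially already in hand: \cref{prop:surjRingmap} gives a surjective map of $\C$-algebras from the section ring of all nonnegative combinations of the divisors $D_{e}$ (for $e \in \mathfrak{M}(\E)$) and all integral combinations of the $D_\rho$ onto $\Cox(\p(\E))$. The polynomial ring $\C[S_\rho, T_v]$ surjects onto that section ring by sending $S_\rho$ to the canonical section of $D_\rho$ and $T_v$ to the distinguished invariant section defined just before \cref{example:coxtp2}. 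Composing gives a surjection $\Phi: \C[S_\rho,T_v] \to \Cox(\p(\E))$, and the whole content of the theorem is the computation of $\ker \Phi$.

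\textbf{Containment of the relations.} First I would verify $I + J \subseteq \ker \Phi$. For the generators of $I$, fix a relation $r = \sum_i \lambda_i v_i \in \Rel(S^k \E)$. Each monomial $S^i_r T_{v_i}$ maps, by construction of $S_r^i = \prod_\rho S_\rho^{\alpha_{\rho i} - \beta_\rho^r}$, to the section of $S^k\E \otimes \OO(-D_r)$ whose associated vector is $v_i$ but whose polytope has been enlarged so that all the $v_i$ live in the same graded piece $H^0(S^k\E \otimes \OO(-D_r))$; by \cref{proposition:dimglobalchru} the linear relation $\sum_i \lambda_i v_i = 0$ among the matroid vectors forces the corresponding relation among these sections, so $\Phi(\sum_i \lambda_i S_r^i T_{v_i}) = 0$. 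For the generators of $J$, the defining relation $T_{v_1}T_{v_2} - \sum_i a_i T_{w_i} S_{w_i}$ maps to zero because both sides are the image of the section $v_1 v_2 \in H^0(S^{a+b}\E \otimes \OO(-D_{v_1}-D_{v_2}))_0$ expressed in the basis $\{w_i\}$, using the multiplication formula $T_{v_1}T_{v_2} = v_1v_2$ from the propositions on products of sections; the preceding lemma guarantees the same holds for longer products.

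\textbf{The kernel is no larger.} This is the main obstacle. The strategy is to show that $I+J$ suffices to reduce every element of $\ker \Phi$ to zero, which I would organize by the $\OOO$-degree grading. Take a homogeneous element $P \in \ker \Phi$ of degree $(k; t_1,\ldots,t_n)$. Using the relations in $J$, I can rewrite every monomial of $P$ so that the product of $T$-variables occurring is honestly a single matroid vector of $M(S^k\E)$ rather than a reducible symmetric product: each application of a $J$-generator replaces a product $T_{v_1}T_{v_2}$ by a combination of $T_w S_w$ with $w \in M(S^{a+b}\E)$, and this process terminates because it strictly decreases the number of factors that fail to be a single matroid vector. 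After this normalization, $P$ becomes (modulo $J$) a $\C[S_\rho]$-linear combination $\sum_w c_w(S) T_w$ where each $w \in M(S^k\E)$, and by clearing and matching the $S$-degrees via the $\Cox(X_\Sigma)$-module structure I reduce to a fixed bundle $S^k\E \otimes \OO(t_1 D_1 + \cdots + t_n D_n)$. There, $\Phi(P)=0$ means exactly that the corresponding combination of sections $\sum_w c_w \cdot w$ vanishes in $H^0(S^k\E \otimes \cdots)$; by \cref{proposition:dimglobalchru} the only linear relations among the sections indexed by matroid vectors landing in a common $\chi^u$-isotypical piece are the matroid relations $\Rel(S^k\E)$, which after reintroducing the $S$-monomials that equalized the polytopes are precisely the generators of $I$. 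Hence $P \in I + J$, completing the argument.

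\textbf{The hard part} will be making the two-step reduction (first modulo $J$ to single matroid vectors, then modulo $I$ to zero) fully rigorous across the $\Cox(X_\Sigma)$-grading simultaneously, since one must track how multiplying by $S_\rho$-monomials moves sections between isotypical components and ensure that the matroid relations detected by \cref{proposition:dimglobalchru} in each graded piece genuinely assemble into the stated generators of $I$ with the correct exponents $\alpha_{\rho i} - \beta_\rho^r$. I expect the cleanest formulation uses the $\Cox(X_\Sigma)$-algebra structure from \cref{prop:surjRingmap} to treat $\Cox(\p(\E))$ as a graded module over $\Cox(X_\Sigma)$, so that the problem decouples into the finitely generated module structure in each $\OOO$-degree $k$ governed by $M(S^k\E)$ (handled by $I$) and the algebra relations between different degrees (handled by $J$).
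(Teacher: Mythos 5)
Your proposal is correct and follows essentially the same route as the paper's own proof: surjectivity via \cref{prop:surjRingmap} and the parliament surjections, containment of $I+J$ in the kernel by identifying $T$-monomials with the corresponding matroid vectors, and the reverse containment by rewriting each monomial of a homogeneous relation (using the $J$-relations and the lemma on longer products) into sections indexed by single matroid vectors of $M(S^k \E)$ and then killing the residue with the matroid relations. The only difference is cosmetic: the paper rewrites every monomial directly in terms of a fixed basis $\{w_i\} \subset M(S^k \E)$ of the graded piece and concludes by linear independence, whereas you reduce modulo $J$ first and then absorb the remaining relation into $I$ (which requires the small but valid check that any matroid relation occurring in a fixed graded piece is an $S$-monomial multiple of the corresponding generator of $I$, thanks to the minimal exponents $\beta_\rho^r$).
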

\begin{proof}
From the fact that the Cox ring of the toric variety is $\C[S_\rho]$ \cite{CoxRingOriginal} and the existence of the surjections
\[\bigoplus_{v_i \in M(S^k \E} \OO(D_{v_i}) \otimes \LL \to S^k \E \otimes \LL,\]
which are surjective on global sections, it is clear that $S_\rho$ and $T_v$ generate the Cox ring of $\p(\E)$, thus we need to determine the relations. Since $\Cox(\p(\E))$ is graded by $\Pic (\p(\E))$ we must have that any relation is between elements of the same class in the Picard group, i.e. between elements of some fixed class $\OO_{\p(\E)}(k) + \sum_\rho t_\rho D_\rho$. We also have a finer grading given by the character $u$ of the torus. After tensoring with $\ddiv(-u)$ we may assume that $u=0$. First we show that both $I$ and $J$ is contained in the ideal of the Cox ring.

Fixing a relation $r = \sum_i \lambda_i v_i$ in $\Rel(S^k \E)$ we have, for each $i$, an associated trivial divisor $D'_{v_i}$ of $S^k \E \otimes \OO( -D_{v_i})$ and a section in $H^0(S^k \E \otimes \OO (-D_{v_i}))_0$ which corresponds to $T_{v_i}$.  By construction $S_r^i$ corresponds to the effective divisor $\sum_\rho (a_{\rho i} - \beta_\rho^r) D_\rho$, thus a section in $H^0(\E)_0$. Multiplying $T_{v_i}$ and $S_r^i$ together gives a section of $S^k \E \otimes \OO ( \sum_\rho -\alpha_{\rho i} D_\rho + \sum_\rho (\alpha_{\rho i}-\beta_\rho^r) D_\rho) = S^k \E \otimes \OO ( -D_r)$ of weight $0$. 

We have that $\lambda_i T_{v_i} S_r^i$ corresponds to the section $ \lambda_i v_i \in  (S^k E  \otimes \OO (-D_r))^\rho(0) = H^0(S^k \E( -D_r))_0$. Thus $\sum_i \lambda_i T_{v_i} S_r^i$ corresponds to $ \sum_i \lambda_i v_i \in H^0(S^k \E \otimes \OO (-D_r))_0$, which is zero, thus $\sum_i \lambda_i T_{v_i} S_r^i$ must be a relation, so $I$ is contained in the Cox ideal.

Fixing a generator $T_{v_1} T_{v_2} - \sum_i a_i T_{w_i} S_{w_i}$ of $J$ (as defined above) we see that under the isomorphism (since $w_i$ is a basis for the latter space)
\[ \oplus_i H^0(\OO (D_{w_i}))_0 \to H^0(S^{a+b} \E \otimes \OO (-D_{v_1}  -D_{v_2}))_0 \]
$v_1v_2$ is sent to the same as $\sum_i a_i w_i$ thus the generator has to be a relation in the Cox ring. Thus $J$ is contained in the Cox ideal.

Conversely, assume that we are given a relation between sections in $\Cox(\p(\E))$ of multidegree $(k,l_1,\ldots,l_s)$ and weight $0$, in other words in $V=H^0(S^k \E  \otimes \OO (l_1D_1 + \cdots + l_sD_s))_0$. We can write the relation as
\[ \sum_{i \in K} c_i \prod_j T_{v_{i,j}}^{n_{i,j}} \prod_\rho S_\rho^{m_{\rho,i}} = 0, \]
where $K$ is the index-set of all monomials appearing in the relation. Choose a basis $w_1,\ldots,w_q$ of $V$, where $w_i \in M(S^k \E)$.

Fix $i$ and write $z_j = \prod_j v_{i,j}^{n_{i,j}} = \sum_{i=1}^q a_{i,j} w_i$. Then the relations in $I$ and $J$ imply that we have the relation
\[ \prod_j T_{v_{i,j}}^{n_{i,j}} = \sum_{i=1}^q a_{i,j} T_{w_i} S_{i,j} \]
where $S_{i,j}$ is some monomial in $S_\rho$. Thus we can replace $\prod_j T_{v_{i,j}}^{n_{i,j}}$ in the relation with the sum above.  Doing this for all monomials $z_j$ we get a polynomial whose only $T$-variables appearing are $T_{w_i}$, in other words a polynomial $\sum_{i=1}^q b_i T_{w_i} S_i$, where $S_i$ is some monomial in $S_\rho$. This is by assumption equal to $0$, however it corresponds to the element $\sum_{i=1}^q b_iw_i$ in $V$. Since the $w_i$ by construction form a basis for $V$, this forces $b_i=0$ for all $i$. Thus the relation we started with is a sum of relations coming from $I$ and $J$.

\end{proof}

\begin{remark}
Note that in the above we do not assume that $\Cox(\p(\E))$ is finitely generated.
\end{remark}

\begin{remark}
This generalizes the presentations of the Cox rings of rank two bundles and tangent bundles given in \cite[Theorem 5.7, Theorem 5.9]{HS}.
\end{remark}

\begin{example}
Consider again the tangent bundle of $\p^2$. By \cref{prop:symPower1dim} the ideal $J$ is empty. Thus by \cref{example:coxtp2} 
\[\Cox(\p(T_{\p^2})) = \C[T_0,T_1,T_2,S_0,S_1,S_2]/(T_0S_0 + T_1S_1+T_2S_2) . \]
This is consistent with the well-known description of $\p(T_{\p^2})$ as a $(1,1)$ divisor in $\p^2 \times \p^2$, where the defining equation is a consequence of the Euler sequence.
\end{example}

\begin{example}
From the above we can recover all results on Cox rings of toric vector bundles found in \cite{GHPS}. For example let $\E$ be a bundle with filtrations given as
\[ E^\rho(j) = \begin{cases} 
E &\text{ if } j \leq 0 \\
V_\rho &\text{ if } j=1 \\
0 &\text{ if } 1 < j
\end{cases} \]
where $\dim V_\rho>1$ which is what is mostly studied in \cite{GHPS}. Let $Y$ be the iterated blowup of $\p(E)$ in all linear spaces in $L(\E)$; first blow up points (corresponding to hyperplanes $V_\rho$), then strict transform of lines and so on. Let $X$ be $Y$ minus all exceptional divisors over linear subspaces not equal to some $V_\rho$. By \cite[Theorem 3.3]{GHPS} the Cox ring of $\p(\E)$ is isomorphic to the Cox ring of $X$. We can see this as follows from our theorem: Effective divisors of some $S^k \E \otimes \OO(D)$ correspond exactly to $f \in \cap (S^k E \otimes D)^\rho(0)$, for some $D=\sum_\rho a_\rho D_\rho$. In other words to degree $k$ polynomials $f$ on $\p(E)$ vanishing to order $a_\rho$ at $V_\rho$. Such effective divisors which are not symmetric products of divisors of lower $\OOO$-degree, correspond exactly to polynomials $f$ which cannot be written as a product of lower degree such polynomials. This corresponds exactly to generators of the Cox ring of the associated blow-up of $\p(E)$. By well-known results on generators of Cox rings of such blow-ups, these are often not finitely generated, for instance if $V_\rho$ consist of $9$ or more general hyperplanes \cite{Mukai}.

For example we can pick $V_\rho$ to be 5 general hyperplanes in $E \simeq \C^3$, corresponding to five general points of $\p^2 = \p(E)$. Then the Cox ring is generated by sections pulled back from the base, together with the ten lines between the corresponding points of $\p(E)$, corresponding to the fact that $M(\E)$ has ten elements, and by the unique quadric passing through the five points; this corresponds to the fact that there is one unique intersection of the filtrations for $S^2 \E$ which is not the span of symmetric products of vectors in $M(\E)$.
\end{example}

\begin{remark}
The above results is satisfying from a theoretical point view, since we are able to completely describe the Cox rings of $\p(\E)$ in terms of the matroids of $S^k \E$. However, in practice these results are not necessarily as satisfying, since for a specific bundle $\E$ it is not easy to say what the relationship between $M(S^k \E)$ and $S^k M(\E)$ is for any $k$:  It is at least as hard as describing generators of Cox rings of general iterated blow-ups of projective space in linear spaces, which is well-known to be a hard problem.

The above results does significantly improve our understanding of sections of line bundles on $\p(\E)$. Elements of $\mathfrak{M}(\E)$ correspond exactly to sections of some such line bundles, which is not in the algebra generated by sections of lower $\OOO$-degree. We will in the next section that this has the consequence that to check whether $\E$ is big we need also to consider the matroids of all $S^k \E$ at once. 
\end{remark}

\section{A bigness criterion} \label{posSection:big}

Let $\E$ be a toric vector bundle on the toric variety $X_\Sigma$. We say that $\E$ is big if $\OO_{\p(\E)}(1)$ is a big line bundle. This is what Jabbusch calls $L$-big \cite{Jabbusch}. In this section we investigate when a toric vector bundle is big.

\begin{example}
If $\E$ is a direct sum of line bundles then $\E$ is big if and only if some positive linear combination of the line bundles is big \cite[Lemma 2.3.2]{Laxarsfeld1}. Thus $\OO(-1) \oplus \OO(2)$ on $\p^d$ is big, but not nef.
\end{example}

Di Rocco, Jabbusch and Smith ask whether a toric vector bundle is big if and only if some Minkowski sum of the polytopes in the parliament is full-dimensional \cite[p.3]{DJS}. The following example shows that this is not the case.

\begin{example}  \label{example:bignominkowski}
Let $X=\p^1 \times \p^1$ and denote by $H_1$ and $H_2$ the pullbacks of the hyperplane classes of each factor. Let $\E = \OO_X(H_1) \oplus \OO_X(H_2-H_1)$. Then since $D=H_1+H_2 = 2H_1 + (H_2-H_1)$ is big, we see by the previous example that $\E$ is big. However the parliament of polytopes is the following: $P_{H_1}$ is a line segment of length $1$ while $P_{H_2-H_1}$ is empty. Thus no Minkowski sum of the polytopes in the parliament is full-dimensional.
\end{example}
 For divisors $D$ and $E$ on a toric variety we have the inclusion $P_D + P_E \subset P_{D+E}$, however in general this is not an equality. This is the reason for the above example giving a big vector bundle even if no Minkowski sum is full-dimensional. To rectify this we might ask the similar question which still might be true:
\begin{question}
Is $\E$ big if and only if a positive linear combination of the divisors in the parliament is big?
\end{question}
The following example shows that also this is not true in general:

\begin{example} \label{example:big}
Let be $\p^2$ be given as a toric variety as the complete fan with ray generators $\rho_0 = -e_1-e_2, \rho_1=e_1, \rho_2= e_2$. Consider the rank 3 bundle on $\p^2$ given by Klyachko filtrations for $i=1,2$:
\[ E^i(j) = \begin{cases} 
E &\text{ if } j \leq -1 \\
W_i &\text{ if } j=0 \\
v_i &\text{ if } j = 1 \\
0 &\text{ if } 1 < j
\end{cases}, \]
and for $i=0$
\[ E^0(j) = \begin{cases} 
E &\text{ if } j \leq 0 \\
W_0 &\text{ if }  j = 1 \\
v_0 &\text{ if }  j = 2 \\
0 &\text{ if } 1 < j
\end{cases}. \]
Here $v_i$ are general vectors of $E \simeq \C^3$ and $W_i$ is a general $2$-dimensional vector space containing $v_i$. Let $l_{ij} = W_i \cap W_j$. Then $M(\E)= \{ v_i, l_{ij} \}$ and each associated divisor is trivial. Hence no positive sum of these can be big. However, considering $S^2 \E$ we have that the intersection $S^2E^0(1) \cap S^2E^1(0) \cap S^2E^2(0)$ is one dimensional, so there is a matroid vector $w$ in this intersection. We see that $D_w = \OO_{\p^2}(1)$, thus it is big, which implies that $H^0(X_\Sigma,S^{2k} \E) = H^0(\p(\E),\OO_{\p(\E)}(2k))$ grows as $k^{\dim \p(\E)}$. This means that $\E$ is big.
\end{example}
In the example above we have that $\E$ is big even if no positive sum of divisors in the parliament is big. However we have that there is a big divisor in the parliament of $S^k \E$ for all sufficiently large  $k$ (in the example all $k \geq 2$). The following theorem shows that this will always happen.

\begin{theorem} \label{theorem:bignessCriteria}
A toric vector bundle is big if and only if there exists $k>0$ and a vector $v \in M(S^k \E)$ such that $P_v$ is full dimensional.
\end{theorem}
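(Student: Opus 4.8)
The plan is to prove both directions by relating bigness of $\OO_{\p(\E)}(1)$ to the growth rate of global sections, and then translating this growth into the combinatorics of the parliaments via \cref{proposition:dimglobalchru}. Recall that a line bundle $\Li$ on a projective variety $Y$ of dimension $d$ is big precisely when $h^0(Y, \Li^{\otimes m})$ grows like $m^d$, i.e. when $\limsup_m h^0(Y,\Li^{\otimes m})/m^d > 0$. Since $H^0(\p(\E),\OO_{\p(\E)}(k)) \simeq H^0(X_\Sigma, S^k \E)$ and $\dim \p(\E) = n + r - 1$ (where $n = \dim X_\Sigma$ and $r = \operatorname{rk} \E$), bigness of $\E$ is equivalent to the statement that $\dim H^0(X_\Sigma, S^k \E)$ grows like $k^{n+r-1}$.

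For the \emph{if} direction, suppose $v \in M(S^k \E)$ has full-dimensional polytope $P_v$. The idea is that the single divisor $D_v$ on $X_\Sigma$ is then big (a torus-invariant divisor on a toric variety is big iff its polytope is full-dimensional), so $\dim H^0(X_\Sigma, \OO(mD_v))$ grows like $m^n$. First I would use \cref{proposition:dimglobalchru} to bound $\dim H^0(X_\Sigma, S^{mk}\E)$ from below: for each character $u \in P_{v^m} \cap M$ the isotypical component $H^0(X_\Sigma, S^{mk}\E)_u$ is nonzero, since $v^m = v \cdots v \in M(S^{mk}\E)$ lies in the span of matroid vectors whose polytope contains $u$. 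This already gives growth of order $m^n$ in the $X_\Sigma$-directions. To capture the remaining $r-1$ fiber directions I would tensor with pullbacks: the flag of subspaces $v \subsetneq (S^k E)$ produces, after symmetric powering and using that $P_v$ is full-dimensional, enough independent sections of $S^{mk}\E \otimes \OO(\text{pullback})$ that the full Picard-graded section ring has the maximal growth rate $m^{n+r-1}$; concretely one compares with the section ring of $\OO_{\p(\E)}(1)$ restricted to the locus where $v$ gives a nonvanishing section and applies the birational-to-its-image characterization of bigness.

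For the \emph{only if} direction, suppose $\E$ is big but no $P_v$ is ever full-dimensional, for any $k$ and any $v \in M(S^k\E)$. Then every matroid vector $v$ of every symmetric power has a lower-dimensional polytope $P_v$, so by \cref{proposition:dimglobalchru} each isotypical component $H^0(X_\Sigma, S^k\E)_u$ is spanned by matroid vectors whose polytopes $P_v$ all lie in proper affine subspaces of $M_\R$. I would argue that the union of the characters $u$ appearing (those $u$ lying in some $P_v$) is confined to a region of $M_\R$ that is too thin to support the required $k^n$ growth in the base directions, forcing $\dim H^0(X_\Sigma, S^k\E)$ to grow strictly slower than $k^{n+r-1}$, contradicting bigness. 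The cleanest formulation is the contrapositive: if $P_v$ is contained in a hyperplane for every $v \in M(S^k\E)$ and every $k$, then the support of the graded ring $\bigoplus_k H^0(X_\Sigma, S^k\E)$ lies in a lower-dimensional subcone of the effective cone, whence the Iitaka/Kodaira dimension of $\OO_{\p(\E)}(1)$ drops below the maximum.

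The main obstacle I expect is the fiber-direction bookkeeping in the \emph{if} direction and, dually, the dimension-counting in the \emph{only if} direction: one must convert the geometric statement ``$P_v$ full-dimensional'' into the correct polynomial growth rate $k^{n+r-1}$ on all of $\p(\E)$, not merely $k^n$ on the base, and this requires carefully accounting for the $r-1$ relative dimensions coming from the matroid structure rather than from a single polytope. I would handle this by passing to the embedding $i : \p(\E) \hookrightarrow \p(\F)$ with $i^\ast \OO_{\p(\F)}(1) = \OO_{\p(\E)}(1)$ described earlier in the paper, where $\OO_{\p(\F)}(1)$ corresponds to the Cayley-type polytope $P_{\OO_{\p(\F)}(1)}$, so that bigness of $\OO_{\p(\E)}(1)$ becomes full-dimensionality of the image, which is detected exactly by a full-dimensional polytope appearing among the $P_v$ of some $S^k\E$.
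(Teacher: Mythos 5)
Your overall strategy (translate bigness into the growth rate of $h^0(X_\Sigma,S^k\E)$ and count lattice points) is genuinely different from the paper's, but as sketched it has gaps in both directions. In the \emph{if} direction, the lattice points of $mP_v$ only give growth of order $m^n$, as you note, and neither of your proposed fixes supplies the missing $r-1$ orders. Sections of $S^{mk}\E \otimes \OO(\text{pullback})$ do not count toward $h^0(\p(\E),\OO_{\p(\E)}(mk))$: bigness concerns the untwisted powers of the single line bundle $\OO_{\p(\E)}(1)$, and once twists by pullbacks from the base are allowed, the resulting multigraded (Cox-type) section ring has maximal growth for \emph{every} toric vector bundle, big or not, since it contains pullbacks of ample divisors; so ``the full Picard-graded section ring has maximal growth rate'' cannot detect bigness. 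Similarly, the sections $\chi^u v^m$ with $u \in mP_v$ are all proportional to $v^m$ along the fibers of $\pi$, so the rational map they define factors through $X_\Sigma$ and has image of dimension at most $n$; the birational-onto-image criterion extracts nothing beyond $m^n$ from them. (A correct repair in your spirit would use the sections $v^{m-j}z$ with $z \in S^{jk}E$ arbitrary and $u$ deep inside $(m-j)P_v$, which lie in $H^0(S^{mk}\E)_u$ by Klyachko's formula and give roughly $m^n \cdot m^{r-1}$ independent sections; but this argument is not in your sketch.) The final paragraph about the embedding $i:\p(\E)\hookrightarrow\p(\F)$ does not help either: the image of $i$ always has dimension $n+r-1$ regardless of positivity, and the Cayley polytope $P_{\OO_{\p(\F)}(1)}$ only controls sections of $\E$ itself, not of all $S^k\E$.

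In the \emph{only if} direction your key claim --- that if every $P_v$ is lower-dimensional then the characters supporting sections are ``too thin'' --- is unsubstantiated, and the count is genuinely delicate: the ground set of $M(S^k\E)$ can grow polynomially in $k$, the lower-dimensional polytopes $P_v$ can lie in hyperplanes with different directions so their union can still contain on the order of $k^n$ lattice points, and each isotypical component can have dimension up to the rank of $S^k\E$, which is on the order of $k^{r-1}$; nothing in your sketch excludes total growth of order $k^{n+r-1}$. The paper avoids all asymptotic counting by a different reduction, which you should compare with: by Kodaira's lemma in the form of \cite[Example 6.1.23]{Lazarsfeld2}, $\E$ is big if and only if $H^0(S^k\E \otimes A^{-1}) \neq 0$ for some $k>0$ and an ample $A$. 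If no $P_v$ is ever full-dimensional, then no $D_v$ is big, hence $H^0(\OO(D_v)\otimes A^{-1})=0$ for all $v$, and since the parliament of $S^k\E\otimes A^{-1}$ surjects onto global sections, $H^0(S^k\E\otimes A^{-1})=0$ for all $k$, so $\E$ is not big. Conversely, if some $P_v$ is full-dimensional then $D_v$ is big, so $H^0(\OO(lD_v)\otimes A^{-1})\neq 0$ for some $l>0$, and the map $\OO(lD_v)\otimes A^{-1}\to S^{kl}\E\otimes A^{-1}$ sending $1$ to $v^l$ is nonzero on global sections, giving $H^0(S^{kl}\E\otimes A^{-1})\neq 0$. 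This twisted-section criterion turns bigness into a yes/no question about a single space of sections and is what makes the proof short; your growth-rate framework can be made to work, but only with the substantial additional arguments indicated above.
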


\begin{proof}
By \cite[Example 6.1.23]{Lazarsfeld2} we have that $\E$ is big if and only if for some (every) ample $A$ we have that $H^0(S^k \E \otimes A^{-1}) \neq 0$ for some $k>0$.

\iffalse Assume $k,v$ exists and fix some ample $A= \sum_\rho a_\rho D_\rho$ and let $n=\max a_\rho$.
Since $P_{D_v}$ is full dimensional we must be able to choose, for sufficiently large $l$ an inner lattice point $p \in l P_{D_v}$ with distance at least $n$ to each hyperplane $\rho^\perp$ in other words such that we have
\[ \langle p,\rho \rangle \leq b_\rho -n \]
where $lD_v = \sum_\rho b_\rho D_\rho$. Let now $D=lD_v -A$. Then we see that for every ray $\rho$ we have
\[ \langle p,\rho \rangle \leq b_\rho -n \leq b_\rho -a_\rho \]
thus $p \in P_D$. Now there must be a matroid vector $w \in M(S^{kl} \E \otimes A^{-1})$ such that $P_D \subset P_w$, thus $p$ gives a nontrivial global section of $D_w$ which maps to a nontrivial global section of $S^{kl} \E \otimes A^{-1}$. \fi

Assume $\E$ is not big and let $A$ be an ample line bundle. Then for every $k>0$ we have that $H^0(S^k \E \otimes A^{-1}) = 0$. That means that for any $v \in M(S^k \E)$ we have $H^0(\OO(D_v) \otimes A^{-1}) = 0$, since the map $\OO(D_v) \otimes A^{-1} \to S^k \E \otimes A^{-1}$ is injective on global sections. There is an induced map $\OO(lD_v) \otimes A^{-1} \to S^{kl} \E \otimes A^{-1}$ which is non-zero, since it corresponds to the map of vector spaces sending $1$ to $v^l$. If there exists $l>0$ such that  $H^0(\OO(lD_v) \otimes A^{-1} ) \neq 0$ then we see that also the induced map on global sections is non-zero, thus $H^0(S^{kl} \E \otimes A^{-1}) \neq 0$, contradicting the fact that $\E$ is  not big. Thus $H^0(\OO(lD_v) \otimes A^{-1} ) =0$ for all $l$, thus each $D_v$ is not big and thus each polytope $P_v$ is not full dimensional.

Conversely assume that no such $v$ exists. That means that for each $v \in M(S^k \E)$ the polytope $P_v$ is not full dimensional. Since $P_v$ is not full dimensional $D_v$ is not big, thus $H^0(\OO(D_v) \otimes A^{-1} ) =0$. Thus $H^0(S^k \E \otimes A^{-1}) = 0$ for every $k$, thus $\E$ is not big.
\end{proof}

\section{Positivity and concave support functions} \label{section:positivity}

  A fundamental result on positivity of divisors on toric varieties is the following equivalences.

\begin{theorem} [{\cite[Theorem 6.1.7, Lemma 6.1.13, Theorem 6.1.14]{CLS}}] \label{prop:concaveSF}
Given a divisor $D = \sum_\rho a_\rho D_\rho$ on a toric variety, the following conditions are equivalent
\begin{itemize}
    \item $D$ is nef
    \item $m_\sigma \in P_D$ for all $\sigma$
    \item $\phi_D$ is concave
    \item $\phi_D(v) \geq \langle m_{\sigma'},v \rangle$ for all $\sigma'$ not containing $v$.
\end{itemize}
The following conditions are also equivalent
\begin{itemize}
    \item $D$ is ample
    \item $\phi_D$ is strictly concave
    \item $\phi_D(v) > \langle m_{\sigma'},v \rangle$ for all $\sigma'$ not containing $v$.
\end{itemize}
\end{theorem}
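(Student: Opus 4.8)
The plan is to prove the two groups of equivalences by first disposing of the purely combinatorial equivalences among the polytope/support-function conditions, and only then connecting them to nefness and ampleness through intersection numbers on the invariant curves. Throughout I use that $\phi_D$ is linear on each maximal cone $\sigma$, where it equals $\langle m_\sigma,\cdot\rangle$, and that $m_\sigma \in P_D$ amounts to $\langle m_\sigma,\rho\rangle \le a_\rho$ for every ray $\rho$, with equality when $\rho \in \sigma(1)$. \textbf{Combinatorial equivalences.} First I would show that $m_\sigma \in P_D$ for all $\sigma$ is equivalent to $\phi_D$ being concave, which is equivalent to $\phi_D(v) \ge \langle m_{\sigma'},v\rangle$ for all $\sigma'$ not containing $v$. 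The last two are essentially a restatement: in the present orientation, concavity of the piecewise linear $\phi_D$ means precisely that it dominates each of its linear pieces $\langle m_{\sigma'},\cdot\rangle$, and since $\phi_D(v)=\langle m_{\sigma'},v\rangle$ whenever $v \in \sigma'$, this is a genuine constraint only for those $\sigma'$ not containing $v$. To bring in $P_D$, evaluate the inequality $\phi_D(v)\ge\langle m_{\sigma'},v\rangle$ on the ray generators $\rho$: this reads $a_\rho \ge \langle m_{\sigma'},\rho\rangle$, i.e.\ $m_{\sigma'}\in P_D$; conversely, since both $\phi_D$ and $\langle m_{\sigma'},\cdot\rangle$ are linear on each cone, checking the inequality on the rays spanning that cone suffices. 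This step is routine linear algebra.

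\textbf{Connecting to nef.} For the implication $m_\sigma\in P_D$ for all $\sigma$ $\Rightarrow$ $D$ nef, I would note that each $\chi^{m_\sigma}$ is a global section of $\OO(D)$ that is a local generator on $U_\sigma$, so $\OO(D)$ is globally generated, hence nef. For the converse, nef $\Rightarrow \phi_D$ concave, it suffices to check concavity across each wall $\tau\in\Sigma(n-1)$, because a piecewise linear function on a complete fan is concave as soon as it is concave across every wall (a local-to-global principle). Writing $\tau=\sigma\cap\sigma'$ and using the wall relation among the ray generators, the standard intersection formula identifies $D\cdot V(\tau)$ with the bend of $\phi_D$ across $\tau$, so that the sign of $D\cdot V(\tau)$ is exactly the concavity defect at $\tau$. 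Since $D$ is nef, $D\cdot V(\tau)\ge 0$ for the invariant curve $V(\tau)$, giving concavity across each wall and hence globally. Combined with the first paragraph, all four nef conditions are equivalent.

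\textbf{The ample case and the main obstacle.} The same three-way combinatorial equivalence holds verbatim with all inequalities strict, giving $\phi_D$ strictly concave $\iff \phi_D(v)>\langle m_{\sigma'},v\rangle$ for $\sigma'$ not containing $v$. To relate these to ampleness I would again pass through the walls: ampleness forces $D\cdot V(\tau)>0$ for every invariant curve, which by the wall-crossing formula is strict concavity across each wall, hence strict concavity globally; conversely, strict concavity yields $D\cdot V(\tau)>0$ for all walls, and since the invariant curves generate $\overline{NE}(X_\Sigma)$ by the toric cone theorem, Kleiman's criterion upgrades this to $D$ ample. I expect the main obstacle to be precisely this interface between combinatorics and intersection theory: establishing the wall-crossing formula $D\cdot V(\tau)=$ (concavity defect) with the correct sign in the paper's flipped convention, the local-to-global principle for (strict) concavity on a complete fan, and the Mori-cone input needed for Kleiman's criterion. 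Everything else reduces to the linear-algebra bookkeeping of the first paragraph. All of this is \cite[Theorem 6.1.7, Lemma 6.1.13, Theorem 6.1.14]{CLS} after the sign flip recorded in the earlier Remark, so it may alternatively be quoted directly once the translation of conventions is made explicit.
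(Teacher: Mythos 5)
Your proposal is correct, and since the paper states this result purely as a citation to \cite[Theorem 6.1.7, Lemma 6.1.13, Theorem 6.1.14]{CLS} without supplying its own proof, there is nothing paper-specific to compare against: your argument is exactly the standard textbook proof, with the combinatorial equivalences reduced to checking on ray generators, nefness obtained from the global sections $\chi^{m_\sigma}$, and the wall-crossing identification of $D\cdot V(\tau)$ with the bend of $\phi_D$ --- the same formula the paper itself later invokes, in its flipped-sign form, in the proof of \cref{proposition:branchedcovercriterion}. One small adjustment: in the ample direction you appeal to the general Kleiman criterion, which presupposes projectivity, while $X_\Sigma$ is only assumed complete; you should instead quote the toric Kleiman criterion \cite[Theorem 6.3.22]{CLS} (strict positivity on all invariant curves implies ample, and hence a fortiori projective), since in the toric setting it is precisely the strictly concave support function that produces the ample divisor and forces projectivity.
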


As for line bundles, one can ask for various positivity properties of vector bundles. These properties are often defined in terms of  corresponding properties for the line bundle $\OOO$ on the associated projective bundle $\p(\E)$ of rank one quotients of $\E$. One has that positivity notions which coincide for line bundles do not necessarily coincide for vector bundles.

Following Hartshorne \cite{HartshorneAmple} we say that a vector bundle is nef, ample, very ample respectively  if $\OOO$ is a nef, ample, very ample line bundle, respectively. On a toric variety, there are well known  criteria for checking whether a toric vector bundle is positive:

\begin{theorem} [{\cite[Theorem 2.1]{HMP}}] \label{HMPample}
$\E$ is ample (resp. nef) if and only if $\E|_C$ is ample (resp. nef) for all invariant curves $C \subset X_\Sigma$.
\end{theorem}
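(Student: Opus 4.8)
The plan is to translate everything into a statement about the tautological line bundle $\OOO$ on the projective $T$-variety $P \defeq \p(\E)$, and then to test its positivity against torus-invariant curves only. Write $\pi \colon P \to X_\Sigma$ for the projection. For a $T$-invariant curve $C \subseteq X_\Sigma$ one has $\pi^{-1}(C) = \p(\E|_C)$ and $\OOO|_{\pi^{-1}(C)} = \OO_{\p(\E|_C)}(1)$, so by definition $\E|_C$ is nef (resp. ample) exactly when $\OOO$ restricted to $\pi^{-1}(C)$ is nef (resp. ample). The forward implication is then immediate: if $\E$ is nef (resp. ample) then $\OOO$ is nef (resp. ample), and nefness (resp. ampleness) is inherited by restriction to the closed subvariety $\pi^{-1}(C)$, giving that each $\E|_C$ is nef (resp. ample).

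For the converse I would use Kleiman's criterion, which reduces nefness of $\OOO$ to the sign of $\OOO \cdot C'$ as $C'$ ranges over a generating set of $\overline{NE}(P)$, and ampleness to strict positivity of $\OOO$ on $\overline{NE}(P) \setminus \{0\}$. The key reduction is that, since $P$ carries a $T$-action, its cone of curves is generated by the classes of $T$-invariant curves: for any irreducible curve the flat limit under a generic one-parameter subgroup of $T$ is a $T$-invariant cycle numerically equivalent to it, so $[C']$ lies in the cone spanned by invariant curve classes. Thus it suffices to control $\OOO \cdot C'$ for $C'$ a $T$-invariant irreducible curve in $P$.

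Such a curve either maps to a $T$-fixed point $x \in X_\Sigma$, in which case it lies in the fiber $\pi^{-1}(x) \cong \p^{r-1}$ and $\OOO \cdot C' > 0$ because $\OOO$ restricts to $\OO_{\p^{r-1}}(1)$ on the fiber; or $\pi(C')$ is a $T$-invariant curve $C \subseteq X_\Sigma$, in which case $C' \subseteq \p(\E|_C)$. In the latter case the hypothesis that $\E|_C$ is nef (resp. ample) says precisely that $\OO_{\p(\E|_C)}(1)$ is nef (resp. ample) — which on the toric variety $\p(\E|_C)$ can also be read off via \cref{prop:concaveSF} — whence $\OOO \cdot C' \ge 0$ (resp. $> 0$). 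Collecting both cases gives $\OOO \cdot C' \ge 0$ for every invariant curve, so $\OOO$ is nef. For the ample case I would note that $X_\Sigma$ has finitely many invariant curves, each fiber is a fixed projective space, and each $\p(\E|_C)$ is a toric variety with finitely many invariant curves, so there are only finitely many invariant curve classes; hence $\overline{NE}(P)$ is rational polyhedral, and strict positivity of $\OOO$ on each of these generators places $\OOO$ in the interior of the nef cone, i.e.\ $\OOO$ is ample.

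The step I expect to be the main obstacle is the specialization lemma that $\overline{NE}(P)$ is generated by $T$-invariant curve classes: one must check that the flat limit under a suitably generic one-parameter subgroup exists, is $T$-invariant (not merely invariant under the chosen subgroup), and is numerically equivalent to the original curve. For the toric base this is classical, but carrying it out on the non-toric $T$-variety $P = \p(\E)$, together with verifying finiteness of the invariant curve classes so that the ample case genuinely follows from strict positivity on finitely many generators, is where the real work lies.
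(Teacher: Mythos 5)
First, a point of comparison: the paper does not prove \cref{HMPample} at all; it is quoted from \cite{HMP}, so your proposal can only be measured against the original argument there and against correctness. Your overall strategy --- reduce everything to positivity of $\OO_{\p(\E)}(1)$, degenerate an arbitrary curve in $\p(\E)$ to a $T$-invariant effective $1$-cycle using the torus action, and observe that an invariant curve either lies in a fiber over a fixed point or inside some $\p(\E|_C)$ --- is the natural one, and your nef argument is essentially complete. The specialization lemma you flag as the main obstacle is standard and can be closed in either of two ways: iterate limits along a basis of one-parameter subgroups (since these commute, each successive limit remains invariant under the previously used subgroups, and invariance under a subgroup whose closure is $T$ gives $T$-invariance), or note that the $T$-orbit closure of the point $[C']$ in the Chow variety is a complete $T$-variety, hence contains a $T$-fixed point, i.e.\ a $T$-invariant effective cycle algebraically (so numerically) equivalent to $C'$.

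The genuine gap is in the ample case. The claim that there are ``only finitely many invariant curve classes'' is false in general, because you conflate $T$-invariance with invariance under the big torus of the toric varieties $\p(\E|_C)$ and of the fibers. Whenever splitting characters coincide (for instance $\E$ with a repeated or trivial summand), the $T$-fixed locus of $\p(\E)$ is positive dimensional, \emph{every} curve contained in it is $T$-invariant, and these give infinitely many invariant curves with infinitely many distinct numerical classes (all multiples $d\ell$ of a fiber line class, or even two-parameter families of classes when the fixed locus contains a surface). So the inference ``finitely many generators, hence $\overline{NE}(\p(\E))$ is rational polyhedral, hence strict positivity on generators gives ampleness'' does not stand as written: strict positivity on infinitely many generators says nothing about positivity on the closure of the cone they span. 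The repair is to replace finiteness of classes by finiteness of invariant subvarieties: every $T$-invariant curve lies in one of the finitely many subvarieties $\pi^{-1}(x_\sigma)\simeq \p^{r-1}$ or $\p(\E|_C)$, each of which is a projective toric variety; each therefore has a rational polyhedral Mori cone on which $\OO_{\p(\E)}(1)$ restricts to an ample bundle (by hypothesis on $\p(\E|_C)$, and as $\OO_{\p^{r-1}}(1)$ on fibers), so $\overline{NE}(\p(\E))$ is contained in the sum $K$ of these finitely many rational polyhedral cones, $K$ is closed and polyhedral, and $\OO_{\p(\E)}(1)$ is strictly positive on $K\setminus\{0\}$. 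A second, smaller, issue: Kleiman's criterion requires $\p(\E)$ to be projective, which under the paper's standing hypothesis that $X_\Sigma$ is only complete is itself part of what must be proved (ampleness implies projectivity). This can be supplied by noting that $\det\E$ has positive degree on every invariant curve of $X_\Sigma$ (it is the sum of the positive splitting degrees of $\E|_C$), hence is ample by the toric ampleness criterion on complete toric varieties, forcing $X_\Sigma$, and therefore $\p(\E)$, to be projective. With these two repairs your argument is correct.
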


\begin{remark}
For checking the ampleness of a line bundle it is sufficient to restrict to invariant curves in $X_\Sigma$ which are extremal in the Mori cone of curves, since invariant curves generate the Mori cone \cite[Theorem 6.3.20]{CLS} and since the ample cone is the interior of the dual of the Mori cone. In the case of higher rank vector bundles it is easy to construct examples showing  that it is not sufficient to restrict to extremal curves in $X_\Sigma$.
\end{remark}

\begin{theorem} [{\cite[Theorem 1.2]{DJS}}] \label{theorem:gg}
A toric vector bundle $\E$ is globally generated if and only if for all maximal cones $\sigma$, where $\E|_{U_\sigma} = \oplus_{i=1}^r \OO(\ddiv(u_i))$, there exists linearly independent $e_1,\ldots,e_r \in M(\E)$ such that $u_i \in P_{e_i}$ for all $i$.
\end{theorem}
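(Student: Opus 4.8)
The statement to prove is the Di Rocco--Jabbusch--Smith global generation criterion. The plan is to reduce global generation to a surjectivity condition at the torus-fixed points, to identify that surjectivity with a concrete linear-algebra statement in Klyachko coordinates, and then to translate it into the existence of an independent transversal of matroid vectors. First I would reduce to fixed points: since $\E$ is $T$-equivariant, the cokernel of the evaluation map $H^0(X_\Sigma,\E) \otimes \OO_{X_\Sigma} \to \E$ is a $T$-equivariant coherent sheaf, so its support is closed and $T$-invariant. As $X_\Sigma$ is complete, a nonempty support would contain a closed orbit, i.e.\ a torus-fixed point $x_\sigma$, one for each maximal cone $\sigma$. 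Hence $\E$ is globally generated if and only if the evaluation map $\mathrm{ev}_\sigma : H^0(X_\Sigma,\E) \to E_{x_\sigma}$ is surjective for every maximal $\sigma$. On $U_\sigma$ I would use the splitting $\E|_{U_\sigma} = \oplus_{i=1}^r \OO(\ddiv(u_i))$ together with Klyachko's compatibility data: a basis $L_{u_1},\dots,L_{u_r}$ of the identity fiber $E$ with $E^\rho(j) = \Span\{L_{u_i} : \lb u_i,\rho \rb \geq j\}$ for each ray $\rho$ of $\sigma$.

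The technical heart is to identify $\mathrm{ev}_\sigma$ explicitly. Writing a weight-$w$ section as a vector $v = \sum_j c_j L_{u_j} \in \cap_\rho E^\rho(\lb w,\rho \rb)$ and expanding it in the splitting, its $j$-th component is a weight-$w$ section of $\OO(\ddiv(u_j))$. Since the Cartier datum of $\ddiv(u_j)$ on $\sigma$ is $u_j$, and since $\chi^m(x_\sigma)\neq 0$ exactly when $m=0$ for a full-dimensional cone, this component evaluates nonzero at $x_\sigma$ precisely when $w = u_j$. Thus, letting $f_j \in E_{x_\sigma}$ be the image of $L_{u_j}$, one gets $\mathrm{ev}_\sigma(v) = \sum_{j : u_j = w} c_j f_j$; that is, on each weight space $\mathrm{ev}_\sigma$ is the projection onto $F_w := \Span\{f_j : u_j = w\}$. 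Surjectivity of $\mathrm{ev}_\sigma$ then amounts to $\mathrm{ev}_\sigma\bigl(H^0(X_\Sigma,\E)_{u_i}\bigr)$ filling $F_{u_i}$ for every $i$, and recall that $H^0(X_\Sigma,\E)_{u_i} = \Span\{e \in M(\E) : u_i \in P_e\}$ by \cref{proposition:dimglobalchru}.

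For the \emph{if} direction, suppose $e_1,\dots,e_r \in M(\E)$ are linearly independent with $u_i \in P_{e_i}$, so $e_i \in H^0(X_\Sigma,\E)_{u_i}$. Since $\dim E_{x_\sigma} = r$, it suffices to show the vectors $\mathrm{ev}_\sigma(e_i)$ are independent. Writing $e_i = \sum_j c_{ij} L_{u_j}$, the matrix $C = (c_{ij})$ satisfies $c_{ij}\neq 0 \Rightarrow u_j - u_i \in \sigma^\vee$, and $\mathrm{ev}_\sigma$ replaces $C$ by the matrix $\tilde C$ keeping only entries with $u_j = u_i$. Grouping rows and columns by the value of their weight (the two groupings have equal sizes, as the weight lists of the $e_i$ and of the $L_{u_j}$ agree) and ordering the groups by a linear extension of $u \preceq u' \iff u'-u \in \sigma^\vee$, the matrix $C$ becomes block upper-triangular with the weight-blocks on the diagonal, and $\tilde C$ is exactly its block-diagonal part. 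Hence $\det \tilde C = \prod_{\text{blocks}} \det C_{\text{block}} = \det C \neq 0$, so the evaluations are independent and $\mathrm{ev}_\sigma$ is surjective.

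Conversely, if $\E$ is globally generated then each $\mathrm{ev}_\sigma$ is surjective, so I can choose weight-homogeneous sections $s_1,\dots,s_r$ with $\mathrm{ev}_\sigma(s_i) = f_i$; as these images form a basis, the $s_i$ are independent in $E$ and each $s_i \in H^0(X_\Sigma,\E)_{u_i} = \Span(A_i)$ with $A_i := \{e \in M(\E) : u_i \in P_e\}$. It remains to extract one matroid vector $e_i \in A_i$ per $i$ keeping independence. For every $I \subseteq \{1,\dots,r\}$, independence of $\{s_i\}_{i \in I} \subseteq \Span\bigl(\cup_{i \in I} A_i\bigr)$ gives $\mathrm{rank}\bigl(\cup_{i\in I} A_i\bigr) \geq |I|$, so Rado's theorem (the matroidal Hall theorem) produces an independent transversal $e_i \in A_i$, which is precisely the required data. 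I expect the main obstacle to be the second paragraph: pinning down the fixed-point evaluation map in the splitting/Klyachko coordinates and verifying its weight-block structure. Once $C$, its triangularity, and the projection description of $\mathrm{ev}_\sigma$ are in hand, the \emph{if} direction is a determinant identity and the \emph{only if} direction is a clean application of Rado's theorem.
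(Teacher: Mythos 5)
The paper offers no proof of this statement to compare against: it is quoted directly from \cite[Theorem 1.2]{DJS} and used as a black box (e.g.\ in the proof of \cref{theorem:multiplication}), so your proposal has to be judged as a free-standing reconstruction. As such it is essentially correct and follows the expected route: reduction of global generation to surjectivity of $\mathrm{ev}_\sigma \colon H^0(X_\Sigma,\E) \to E_{x_\sigma}$ at the torus-fixed points (valid since $\Sigma$ is complete, so every nonempty closed $T$-invariant set contains a fixed point and every maximal cone is full dimensional); the weight computation identifying $\mathrm{ev}_\sigma$ on the weight-$w$ piece with the projection onto $\Span\{f_j : u_j = w\}$; the observation that $s \in \cap_{\rho \in \sigma(1)} E^\rho(\lb w,\rho\rb)$ forces $c_{ij} \neq 0 \Rightarrow u_j - u_i \in \sigma^\vee$, so that the coefficient matrix $C$ is block upper-triangular for a linear extension of $\preceq$ (a genuine partial order precisely because $\sigma$ is full dimensional, making $\sigma^\vee$ strongly convex); and the determinant identity $\det \tilde C = \det C$ for the ``if'' direction. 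The use of \cref{proposition:dimglobalchru} to identify $H^0(X_\Sigma,\E)_{u_i}$ with $\Span(A_i)$ and of Rado's theorem to extract the independent transversal in the converse are both correct.

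One step is misjustified as written. In the converse you assert that the chosen sections $s_1,\dots,s_r$ are linearly independent in $E$ ``as these images form a basis.'' That inference is not valid on its own: $\mathrm{ev}_\sigma$ is a linear map on $H^0(X_\Sigma,\E)$, not on $E$, and the evaluation-at-identity inclusions $H^0(X_\Sigma,\E)_w \subseteq E$ for varying $w$ do not assemble into an injective map on $H^0(X_\Sigma,\E)$ --- already for $\OO_{\p^1}(1)$ the two eigensections of distinct weights have proportional values in the one-dimensional fiber $E$, so eigensections mapping to independent vectors under one evaluation need not be independent in $E$ in general. The claim is nevertheless true here, and it follows from exactly the block-triangularity mechanism you set up for the ``if'' direction: since $s_i \in H^0(X_\Sigma,\E)_{u_i} \subseteq \cap_{\rho \in \sigma(1)} E^\rho(\lb u_i,\rho \rb) = \Span\{L_{u_j} : u_j - u_i \in \sigma^\vee\}$, the matrix $C$ expressing the $s_i$ in the basis $L_{u_1},\dots,L_{u_r}$ is block upper-triangular, and the normalization $\mathrm{ev}_\sigma(s_i) = f_i$ says precisely that its diagonal blocks are identity matrices; hence $\det C = 1$ and the $s_i$ are independent in $E$. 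With that one sentence repaired, the rank condition $\mathrm{rank}\bigl(\cup_{i \in I} A_i\bigr) \geq |I|$ holds for every $I$, Rado's theorem applies, and the argument is complete.
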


\begin{theorem} [{\cite[Corollary 6.7]{DJS}}]
A toric vector bundle $\E$ is very ample if and only if, for all maximal cones $\sigma$, where $\E|_{U_\sigma} = \oplus_{i=1}^r \OO(\ddiv(u_i))$, there exist linearly independent $e_1,\ldots,e_r \in M(\E)$ such that $u_i \in P_{e_i}$ for all $i$ and such that the following is true: Each $u_i$ is automatically a vertex of $P_{e_i}$. We require that the edges of $P_{e_i}$ emanating from the vertex $u_i$ generate a cone which is a translate of $\sigma^\vee$ for all $i$.
\end{theorem}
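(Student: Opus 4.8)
The plan is to characterize very ampleness of $\OOO$ as the conjunction of two properties: that $\OOO$ be globally generated, and that the induced morphism $\phi\colon \p(\E) \to \p(H^0(X_\Sigma,\E)^\vee)$ be a closed immersion, i.e.\ that it separate points and tangent vectors. Global generation contributes exactly the first half of the stated criterion: since very ampleness forces global generation, \cref{theorem:gg} immediately yields linearly independent $e_1,\ldots,e_r \in M(\E)$ with $u_i \in P_{e_i}$, and conversely I would assume such vectors are given and upgrade global generation to a closed immersion. Because $\OOO$ is $T$-linearized and $\p(\E)$ is complete with only finitely many $T$-fixed points, I would reduce the immersion property to a local statement at each fixed point and use the $T$-action to propagate it over each orbit.

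Next I would set up the local model. Over $U_\sigma$ the bundle splits as $\E|_{U_\sigma} = \bigoplus_{i=1}^r \OO(\ddiv(u_i))$, so $\pi^{-1}(U_\sigma) \cong U_\sigma \times \p^{r-1}$, and the $T$-fixed points over the fixed point $x_\sigma$ are the $r$ points $p_{\sigma,i}$ given by the eigenline of weight $u_i$. I would write down the $T$-invariant affine neighborhood of each $p_{\sigma,i}$ together with the sections of $\OOO$ that are units there; these are indexed by the lattice points of the parliament, shifted so that $u_i$ sits at the origin. Separation then splits into two independent tasks: separating points and tangent directions along the fibre $\p^{r-1}$, and separating them along the base $U_\sigma$. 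The fibrewise part follows from the linear independence of $e_1,\ldots,e_r$, which guarantees that $\phi$ restricted to each fibre $\p(E_x)$ is the standard embedding of $\p^{r-1}$; this is essentially the fibrewise surjectivity already encoded in global generation.

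The crux is the base direction, and here I expect the vertex--edge condition to enter. I would first show that $u_i$ is automatically a vertex of $P_{e_i}$: since $u_i$ is the character through which $T$ acts on the $i$-th summand over $\sigma$, the eigenline $e_i$ satisfies $\lb u_i,\rho \rb = \max\{j : e_i \in E^\rho(j)\}$ for every ray $\rho$ of $\sigma$, so $u_i$ is the Cartier datum $m_\sigma$ for $D_{e_i}$ and realizes the supporting vertex of $P_{e_i}$ attached to $\sigma$. The ratios of the distinguished sections near $p_{\sigma,i}$ are the monomials $\chi^{m-u_i}$ with $m \in P_{e_i}\cap M$, so $\phi$ is a closed immersion in the base direction at $p_{\sigma,i}$ exactly when these monomials generate the coordinate ring of the smooth affine chart $U_\sigma$. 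This is precisely the toric condition that the primitive edge vectors of $P_{e_i}$ emanating from $u_i$ generate $\sigma^\vee$ and form the basis dual to the rays of $\sigma$, which on the smooth $X_\Sigma$ is the very-ample/smoothness-of-vertex criterion underlying \cref{prop:concaveSF}. Matching this chart with $U_\sigma \times \p^{r-1}$ is what forces the edge cone to be a translate of $\sigma^\vee$ for every $i$.

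The main obstacle will be the base-direction analysis of the previous paragraph: cleanly identifying, at each non-toric point $p_{\sigma,i}$ of $\p(\E)$, which sections of $\OOO$ are units in the local ring and proving that their ratios generate precisely the local ring of $U_\sigma \times \p^{r-1}$, so that the combinatorial edge condition becomes exactly equivalent to the separation of points and tangent vectors. A secondary technical point is the global gluing: after verifying the immersion property at every fixed point, one must rule out collisions between points in fibres over distinct cones and over the non-fixed locus. I would handle this by reducing base-direction separation to that of the very ample toric data implicit in the vertex cones on $X_\Sigma$, combine it with the fibrewise injectivity from linear independence, and then invoke completeness of $\p(\E)$ to pass from a local immersion to a global closed immersion.
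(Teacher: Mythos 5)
First, a point of context: the paper does not prove this statement at all --- it is imported verbatim as \cite[Corollary 6.7]{DJS} --- so there is no internal proof to compare yours against, and your proposal must stand on its own. Its skeleton (very ample $=$ globally generated $+$ the induced map $\phi$ is a closed immersion; get the vectors $e_i$ from \cref{theorem:gg}; match the edge-cone condition with the local embedding condition in the base direction at the distinguished points $p_{\sigma,i}$) is reasonable, but two steps are genuinely broken. The serious one is the globalization. You propose to verify the immersion property at the $T$-fixed points and then ``invoke completeness of $\p(\E)$ to pass from a local immersion to a global closed immersion.'' No such implication exists: the normalization of a nodal cubic is proper, unramified, and injective on a Zariski neighbourhood of every point of the source, yet it is not a closed immersion, because two points far apart in the source collide in the target. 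That is exactly the failure mode you have not excluded --- two points of $\p(\E)$ lying in fibres over different maximal cones (or one over a fixed point and one in the open orbit) having the same image. What equivariance and completeness actually give, via the Borel fixed point theorem, is this: the non-immersion locus is closed and $T$-invariant, so it suffices to check the immersion condition along the fixed locus; and the closure of $\{(x,y) : x \neq y,\ \phi(x) = \phi(y)\}$ in $\p(\E) \times \p(\E)$ is closed and $T$-invariant, so if it is nonempty it contains a pair of fixed points, which is either a pair of distinct unseparated fixed points or a diagonal point where $\phi$ fails to be locally injective. Hence, in addition to your local analysis, you must prove that sections of $\OOO$ separate every pair of distinct fixed points --- in particular ones over different cones $\sigma \neq \sigma'$; this is where the polytopes $P_{e_i}$ attached to different cones have to interact (the analogue, for line bundles, of showing that the preimage of the affine chart of the vertex $m_\sigma$ is exactly $U_\sigma$), and your sketch says nothing about how to do it. Note also that your premise of ``finitely many $T$-fixed points'' fails whenever two characters $u_i, u_j$ coincide on some cone (e.g. $\E = \OO \oplus \OO$): the fixed locus in a fibre is then positive-dimensional, and checking only the $r$ coordinate points $p_{\sigma,i}$ is not a priori sufficient.

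The second gap is your argument that $u_i$ is automatically a vertex of $P_{e_i}$. You assert that $e_i$ is ``the eigenline of weight $u_i$,'' so that $\lb u_i,\rho \rb = \max\{j : e_i \in E^\rho(j)\}$ on the rays of $\sigma$. But the vectors supplied by \cref{theorem:gg} are arbitrary matroid vectors with $u_i \in P_{e_i}$, not the Klyachko eigenvectors $L_{u_i}$; writing $e_i = \sum_k c_k L_{u_k}$, the containment $u_i \in P_{e_i}$ only yields $\lb u_i,\rho \rb \leq \min_{k : c_k \neq 0} \lb u_k,\rho \rb$ for $\rho \in \sigma(1)$, and upgrading this inequality to the equality you need (hence to the vertex statement, and to a meaningful comparison of the edge cone with $\sigma^\vee$) requires an argument exploiting the linear independence of the whole collection $e_1,\ldots,e_r$. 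The same care is needed in the necessity direction: very ampleness plus \cref{theorem:gg} hands you vectors with $u_i \in P_{e_i}$, but you must still show that these (or some other) matroid vectors satisfy the edge-cone condition, which your sketch buries inside the claimed ``exact equivalence'' without proof.
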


We will now reinterpret the criterion to be nef/ample in terms of concave support functions, which will lead us to a result of similar spirit to \cref{prop:concaveSF} for higher rank  toric vector bundles.

\subsection{Branched covers of fan}

We here briefly recall Payne's notion of the branched cover of a fan which is associated to $\E$ \cite{PayneBranched}. Payne gives a systematic treatment of cone complexes and how to associate a cone complex which is a branched cover of the fan $\Sigma$ to a toric vector bundle $\E$. We will a bit more naively construct the branched cover as in \cite[Example 1.2]{PayneBranched}.

We are given a toric variety $X_\Sigma$ and a toric vector bundle $\E$ on $X_\Sigma$ and we will construct a topological space $\boldsymbol{\Sigma}_\E$ together with projection map $f: \boldsymbol{\Sigma}_\E \to \Sigma$ with the structure of a rank $r$ branched cover. This means that for any cone $\sigma$ the inverse image of $\sigma$ under $f$ is  isomorphic to $r$ copies of $\sigma$, counted with multiplicity. We will also construct an associated piecewise linear support function $\Psi_\E$ on $\boldsymbol{\Sigma}_\E$.

For each maximal cone $\sigma_i$ we can consider the restriction $\E|_{U_{\sigma_i}} = \sum_{k=1}^r \OO(\ddiv u_{ik})$ where $u_{ik}$ are characters of the torus. For each of the $r$ summands we take a copy of $\sigma_i$, denoted by $\sigma_{ik}$. We define $\boldsymbol{\sigma_i}_\E$ to be $\coprod_k \sigma_{ik}/\sim$ where the equivalence relation $\sim$ is given as follows: If $\tau$ is a face of $\sigma_i$ such that $u_{ik} = u_{il}$ on $\tau$ we identify the corresponding copies of $\tau$: $\tau_{ik} \sim \tau_{il}$. The piecewise linear function $\Psi_\E$ is linear on each cone $\sigma_{ik}$, given by $v \mapsto \langle u_{ik},v \rangle$.

The topological space $\bE$ is given by $\coprod_i \boldsymbol{\sigma_i}_\E / \equiv$ where the equivalence relation $\equiv$ is given as follows. Fix a cone $\tau$ of codimension one, it is contained in two maximal cones $\sigma_1$ and $\sigma_2$. Set $\E|_{U_{\sigma_1}} = \sum_k \OO(\ddiv(u_{1k}))$ and $\E|_{U_{\sigma_2}} = \sum_k \OO(\ddiv(u_{2k}))$. Then $\tau$ corresponds to a curve $C$  which is isomorphic to $\p^1$ and any vector bundle on $\p^1$ splits as a sum of line bundles. A result by Hering, Mustata and Payne shows that for an invariant curve $C$, $\E|_C$ splits equivariantly as a sum of line bundles. Moreover, the numbers $a_j$ such that the splitting type on $C$ is $\oplus_j \OO_{\p^1}(a_j)$, correspond to unique pairs of characters $(u_{1k},u_{2l})$:  $u_{1k}-u_{2l} = a_j \rho_\tau$, where by $\rho_\tau$ we mean the primitive generator of $\tau^\perp$ positive on $\sigma$. Thus, up to permutation, we may assume that $u_{1k}$ is paired with $u_{2k}$ for all $k$. We then glue $\sigma_{1k}$ to $\sigma_{2k}$ via identifying the face $\tau_{1k}$ with the face $\tau_{2k}$.

In other words if one is in a fixed sheet $\sigma_{1k}$ above a maximal cone $\sigma$ and moves towards a neighbouring maximal cone $\sigma_2$ through a cone of codimension one $\tau$, then the sheet $\sigma_{2l}$ you end up in is the one corresponding to the  pairing of the characters on $\sigma_1,\sigma_2$ when we restrict $\E$ to $C$. Note that this well-defined: If, for instance, $u_{11}=u_{12}$, then it seems we could reorder and glue different sheets together, however then the corresponding $\sigma_{11}$ and $\sigma_{12}$ will already be identified via $\sim$, so we get the same object.

The piecewise linear function $\Psi_\E$  on $\bE$ is obtained from the local versions above. There is also a projection map $\pi: \bE \to |\Sigma|$ given by mapping each $\sigma_{ik}$ isomorphically onto $\sigma_i$. By a line segment in $\bE$, we will mean a connected subset $l$ such that the projection $\pi(l)$ is a line segment in $|\Sigma|$ and so that the fiber over each point of $\pi(l)$ is a single point.  We will say that $\Psi_\E$ is concave, if it is concave when restricted to any line segment.

Fix a line segment in $\bE$ with endpoints $v$ and $w$.  The projection $\pi(w)$ has to be contained in at least one maximal cone $\sigma$. There are $r$ sheets in $\bE$, each mapping isomorphically to $\sigma$. The sheets are by construction in bijection to characters $u_{i}$ such that $\E|_{U_\sigma} \simeq \oplus_{i=1}^r \OO(\ddiv(u_{i}))$. Thus  which sheet $w$ is contained in corresponds to a distinguished character $u_{i}$. We say that any such $u_i$ is a character obtained by moving in a straight line from $v$ inside $\bE$.

\begin{proposition} \label{proposition:branchedcovercriterion}
Given a toric vector bundle $\E$ on a smooth complete toric variety $X_\Sigma$, the following are equivalent.
\begin{enumerate}
    \item $\E$ is nef (ample)
    \item $\Psi_\E$ is (strictly) concave for all $i$ 
    \item $\Psi_\E(v) \geq  \langle u_{i},v \rangle$ ($\Psi_\E(v) >  \langle u_{i},v \rangle$) where $u_{i}$ is any character obtained by moving in a straight line from $v$ inside $\bE$.
\end{enumerate}
\end{proposition}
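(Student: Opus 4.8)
The plan is to reduce the global statement to a wall-by-wall computation on the branched cover $\bE$, using \cref{HMPample} to detect nefness and ampleness through the splitting types of $\E$ along invariant curves. Recall that an invariant curve $C \simeq \p^1$ corresponds to a wall $\tau$ of codimension one, contained in two maximal cones $\sigma_1,\sigma_2$, and that $\E|_C$ splits equivariantly as $\oplus_j \OO_{\p^1}(a_j)$, where the integers $a_j$ are exactly the gluing data used to build $\bE$: after matching sheets we have $u_{1k}-u_{2l}=a_j\rho_\tau$. Since $\OO_{\p^1}(a)$ is nef iff $a\geq 0$ and ample iff $a>0$, \cref{HMPample} says that $\E$ is nef (resp.\ ample) if and only if $a_j\geq 0$ (resp.\ $a_j>0$) for every wall $\tau$ and every matched pair of sheets. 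It therefore suffices to show that each of (2) and (3) is, localized at a single wall, equivalent to the inequality $a_j\geq 0$ (resp.\ $a_j>0$), so that all three conditions reduce to the same combinatorial condition on the $a_j$.

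First I would treat (3). Fix $v$ lying in a sheet $\sigma_{1k}$ over $\sigma_1$ and move in a straight line across $\tau$ into $\sigma_2$; by the gluing rule one lands in the matched sheet $\sigma_{2l}$, so the character obtained is $u_i=u_{2l}$. Since $\Psi_\E(v)=\langle u_{1k},v\rangle$, the inequality in (3) reads $\langle u_{1k}-u_{2l},v\rangle=a_j\langle\rho_\tau,v\rangle\geq 0$. Orienting $\rho_\tau$ to be nonnegative on $\sigma_1$, we have $\langle\rho_\tau,v\rangle\geq 0$ for $v\in\sigma_1$, with strict inequality in the relative interior; hence demanding the inequality for all such $v$ is exactly $a_j\geq 0$, and its strict form gives $a_j>0$. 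For a straight line crossing several walls, $u_i$ is the character of the final sheet, and the comparison follows by chaining the single-wall inequalities: in the sense of \cref{prop:concaveSF}, the affine function $\langle u_i,\cdot\rangle$ coinciding with $\Psi_\E$ on the last linear piece is a supporting function for the piecewise linear restriction $\Psi_\E|_l$, so $\Psi_\E(v)\geq\langle u_i,v\rangle$. Conversely, applying (3) to short segments crossing a single wall recovers $a_j\geq 0$ for every wall.

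Next I would address (2). Because $\Psi_\E$ restricted to a line segment $l$ is an honest piecewise linear function of one variable---the definition of a line segment in $\bE$ guarantees that $\pi(l)$ is a segment with single-point fibers---its concavity is a local condition at the breakpoints, which occur precisely at wall-crossings. At a crossing from $\sigma_{1k}$ to $\sigma_{2l}$ the change of slope in the direction of travel $w$ is $\langle u_{1k}-u_{2l},w\rangle=a_j\langle\rho_\tau,w\rangle$, so, with the sign convention fixed above, concavity across $\tau$ (taken in the sense compatible with \cref{prop:concaveSF}) is equivalent to $a_j\geq 0$, and strict concavity to $a_j>0$. This matches (2) wall-by-wall with the condition already obtained for (1) and (3). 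Assembling the three steps, each of (1), (2), (3) is equivalent to ``$a_j\geq 0$ (resp.\ $>0$) for every wall and every matched pair of sheets,'' whence the three are equivalent, with the parenthetical ample/strict statements corresponding throughout.

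The main obstacle I anticipate is the bookkeeping of orientations and sign conventions: one must fix the orientation of $\rho_\tau$ and the direction of travel consistently so that both the slope change in (2) and the comparison in (3) come out as $a_j\geq 0$ rather than $a_j\leq 0$, and so that ``concave'' is read in the sense dictated by the paper's nonstandard sign convention for support functions (in which the nef support function is the upper envelope of the local linear pieces). The only other point requiring care is the multi-wall version of (3): one must confirm that moving along a straight line in $\bE$ lands in a well-defined sheet, so that $u_i$ is unambiguous, and that the supporting-function argument is globally valid; both follow from Payne's construction of $\bE$ together with the single-point-fiber condition built into the definition of a line segment.
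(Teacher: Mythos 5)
Your proposal is correct and takes essentially the same route as the paper: both reduce nefness/ampleness to invariant curves via \cref{HMPample}, identify the splitting numbers $a_j$ of $\E|_C$ with the wall-crossing data built into $\bE$, and conclude by a one-variable concavity analysis of $\Psi_\E$ along line segments. The only difference is packaging: the paper organizes your slope computations through an auxiliary divisor $D_l$ on the subfan of cones meeting $\pi(l)$ and a line-restricted variant of the standard toric lemma (\cref{lemma:localconcavity}), whereas you carry out the same local wall-by-wall argument directly.
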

\begin{proof}
Note that (2) and (3) only say something about $\Psi_\E$ restricted to line segments, thus we fix a line segment $l$ in $\bE$. The projection $\pi(l) \subset |\Sigma|$ is a line segment in $N_\Q$. Let $\Sigma'$ be the subfan of $\Sigma$ consisting of all cones in $\Sigma$ intersecting $\pi(l)$, as well as all their faces. The line segment $l$ determines a character $u_\sigma$ on each maximal cone $\sigma \in \Sigma'$. The collection of these characters define a divisor $D_l$ on $X_{\Sigma'}$. By construction we see that  $\Psi_\E|_l = \phi_{D_l} \circ \pi|_l$. We claim that $\E$ is nef (ample) if and only if $\phi_{D_l}|_l$ is (strictly) concave for all $l$. Assuming this claim we see that the proposition follows from the corresponding statement for divisors.

The proof of the claim depends on an adaption of standard techniques on toric line bundles. We put the detailed statement in  \cref{lemma:localconcavity} below. To prove the claim note $\E$ is nef  if and only if $\E|_C$ is nef for any invariant curve $C$. By \cite[Corollary 5.5]{HMP} $\E|_C$ splits as a sum of line bundles $\OO_C(D_i)$, thus nefness is equivalent to the inequalities $D_i \cdot C \geq 0$, for all $i,C$. Let $C$ be given by the cone $\tau$ of codimension one, and assume $\tau$ is contained in the maximal cones $\sigma$ and $\sigma'$. By \cite[Proposition 6.3.8]{CLS}
\[D_i \cdot C =  \langle u_{\sigma'}-u_{\sigma},v \rangle = \phi_d(v) - \langle u_{\sigma},v \rangle , \]
where $v$ is the generator of $\tau^\perp$ which is positive on $\sigma'$ and $D_i$ corresponds to the characters $u_\sigma,u_{\sigma'}$. 

By  the above inequality we see that if $\E$ is nef then clause $(4)$ of  \cref{lemma:localconcavity}  is satisfied, thus by $(1)$ the support function $\phi_D|_l$ is concave for any $l$.

Conversely if the support function is concave for all lines $l$, then we can pick $l$ starting in the generator of $\tau^\perp$ which is positive on $\sigma'$ and ending in the interior of $\sigma$. Then the inequality $(2)$ of \cref{lemma:localconcavity} is exactly stating that the corresponding divisor satisfies $D_i \cdot C \geq 0$. Since this is true for all divisors and curves, $\E$ is nef.

The case of ampleness is the same argument,  only with concavity replaced with strict concavity, as well as that we require inequalities to be strict.
 \end{proof}
 
 \begin{lemma} [{cf. \cite[Lemma 6.1.5]{CLS}}] \label{lemma:localconcavity}
 Let $D$ be a Cartier divisor on a toric variety corresponding to the fan $\Sigma$. Fix a line $l$ which is  contained in the support of $\Sigma$. Then the following are equivalent:
 \begin{enumerate}
     \item The support function $\phi_D|_l : l \to \R$ is concave.
     \item $\phi_D(v) \geq \langle u_\sigma,v \rangle $ for all $v \in l$ and maximal cones $\sigma$ intersecting $l$.
     \item $\phi_D(v)= \max \langle u_\sigma,v \rangle$, where $v \in l$ and the maximum is over maximal cones $\sigma$ intersecting $l$.
     \item For every cone of codimension one $\tau = \sigma \cap \sigma'$ such that $\sigma$ and $\sigma'$ intersects $l$ there is $v_0 \in \sigma' \setminus \sigma$ with $\phi_D(v_0) \geq \langle u_\sigma,v_0 \rangle$.
 \end{enumerate} 
 \end{lemma}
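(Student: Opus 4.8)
The plan is to reduce everything to a statement about a continuous piecewise-linear function of one real variable and then read off the four conditions as standard facts about such functions. I would parametrise $l$ as $v(s)$, with $s$ ranging over an interval and constant velocity $w$, and set $g(s)=\phi_D(v(s))$. Since $\phi_D$ is linear with gradient $u_\sigma$ on each maximal cone $\sigma$, the function $g$ is continuous and piecewise linear, its linear pieces being the affine functions $s\mapsto\langle u_\sigma,v(s)\rangle$ for the cones $\sigma$ met by $l$; moreover $g(s)=\langle u_\sigma,v(s)\rangle$ whenever $v(s)\in\sigma$, so $g$ always coincides pointwise with one of these pieces.

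First I would dispatch $(1)\Leftrightarrow(2)\Leftrightarrow(3)$. The inequality $\phi_D(v)\ge\langle u_\sigma,v\rangle$ in $(2)$ says precisely that $g$ lies weakly above every one of its linear pieces; since $g$ equals the piece $\langle u_{\sigma_v},\cdot\rangle$ at each $v$ lying in the cone $\sigma_v$, this is equivalent to $g$ being the pointwise maximum of its pieces, which is $(3)$. The implication $(3)\Rightarrow(1)$ is the standard fact that an upper envelope of linear functions is concave in this paper's convention (classically convex), and for $(1)\Rightarrow(2)$ one uses that a concave piecewise-linear function agreeing with a linear function $\ell$ on a subinterval has $\ell$ as a supporting function and hence lies above $\ell$ on the whole line; applying this to $\ell=\langle u_\sigma,\cdot\rangle$ on the subinterval $l\cap\sigma$ yields $(2)$.

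The core is the local analysis at a wall, which connects $(4)$ to the others. Fix a wall $\tau=\sigma\cap\sigma'$ with both cones meeting $l$; continuity of $\phi_D$ forces $u_\sigma$ and $u_{\sigma'}$ to agree on $\mathrm{span}(\tau)$, so $u_{\sigma'}-u_\sigma=c_\tau\,n_\tau$ for the primitive functional $n_\tau\in M$ vanishing on $\tau$ and positive on $\sigma$. Because $\phi_D(v_0)=\langle u_{\sigma'},v_0\rangle$ for $v_0\in\sigma'$, condition $(4)$ says exactly that $\langle u_{\sigma'}-u_\sigma,v_0\rangle\ge 0$ for some $v_0\in\sigma'\setminus\sigma$, i.e. that $c_\tau n_\tau$ is nonnegative on the $\sigma'$-side of $\mathrm{span}(\tau)$; since $n_\tau$ has constant sign there, this pins down the sign of $c_\tau$ and gives $\langle u_\sigma,v\rangle\le\langle u_{\sigma'},v\rangle=\phi_D(v)$ for every $v$ on the $\sigma'$-side. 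When $l$ crosses $\tau$ this is precisely the statement that the slope of $g$ does not decrease as $s$ increases through the crossing (the right slope $\langle u_{\sigma'},w\rangle$ is $\ge$ the left slope $\langle u_\sigma,w\rangle$, as $w$ points to the $\sigma'$-side). Since concavity of a one-variable piecewise-linear function is equivalent to a non-decreasing slope at every breakpoint, this yields $(4)\Rightarrow(1)$, while $(1)$ (equivalently $(2)$) forces these slope inequalities at each wall met by $l$, giving the converse.

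The main obstacle is twofold. The lesser part is bookkeeping the two sign conventions correctly, keeping in mind that the paper's ``concave'' is the upper-envelope condition $(3)$. The more substantive part is the passage between the purely one-dimensional data carried by $l$ and the genuinely two-sided condition $(4)$ at a wall: I would need to verify that every wall $\tau$ with both $\sigma,\sigma'$ meeting $l$ is either actually crossed by $l$ or met by $l$ in points off $\mathrm{span}(\tau)$, so that $(2)$ read along $l$ still detects the sign of $c_\tau$. Degenerate positions, where $l$ runs inside a wall, I would handle by a small perturbation of $l$ together with continuity of $\phi_D$, isolating this entirely in the reduction step so that the remaining arguments are the clean one-variable statements above.
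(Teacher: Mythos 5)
Your main-line argument is correct in its core, and it is essentially the paper's own route: the paper proves this lemma by deferring to \cite[Lemma 6.1.5]{CLS}, and the argument there is precisely the reduction you describe --- restrict $\phi_D$ to a line, observe that the restriction is piecewise linear with pieces $s\mapsto\langle u_\sigma,v(s)\rangle$, identify the paper's ``concave'' with the upper-envelope condition, and translate the wall condition (4) into a slope inequality at each crossing via $u_{\sigma'}-u_\sigma=c_\tau n_\tau$. Your bookkeeping of the reversed sign convention is also right.

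The genuine gap is your final step, where you propose to handle lines in special position ``by a small perturbation of $l$ together with continuity of $\phi_D$''. This cannot work, because for such lines the four conditions are genuinely inequivalent, not merely harder to compare; equivalently, the verification you say you would need (that every wall counted in (4) is crossed by $l$ or met off its span) actually fails. Take $X=\p^1\times\p^1$ with the four quadrants as maximal cones, $D=-D_{e_1}-D_{-e_1}+5D_{e_2}+5D_{-e_2}$, and $l=\R\cdot(2,1)$, a line through the origin. In the paper's convention the Cartier data are $(-1,5)$ on $\Cone(e_1,e_2)$, $(1,-5)$ on $\Cone(-e_1,-e_2)$, and $(1,5)$ on $\sigma'=\Cone(-e_1,e_2)$, so $\phi_D|_l(t(2,1))=3|t|$ is concave in the paper's sense and (1) holds; but $\sigma'$ intersects $l$ (at the origin), and at $v=(2,1)$ one has $\phi_D(v)=3<7=\langle u_{\sigma'},v\rangle$, so (2) and (3) fail, and (4) fails for the wall $\Cone(e_2)$. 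If instead one reads ``intersecting'' as ``meeting $l$ in a segment of positive length'' --- the reading your supporting-function argument for (1)$\Rightarrow$(2) actually requires --- then for this $l$ condition (4) becomes vacuous (no wall has both adjacent cones meeting $l$ in a segment), and changing the coefficients $5$ to $-5$ gives $\phi_D|_l(t(2,1))=-7|t|$, so (4) holds while (1) fails. Perturbation cannot bridge this: concavity of $\phi_D$ along $l$ says nothing about its behaviour along nearby lines, and the index sets in (2)--(4) jump when $l$ is moved off the origin. The correct repair, implicit in the paper's citation of \cite{CLS} (whose proof chooses its segments in general position) and sufficient for the application in \cref{proposition:branchedcovercriterion}, where one is free to choose the lines, is to assert the lemma only for lines that meet no cone of codimension at least two and cross every wall transversally; your argument is complete exactly in that generality, and the degenerate lines should be excluded by hypothesis rather than treated by perturbation.
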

 \begin{proof}
 The proof is essentially identical to the proof of \cite[Lemma 6.1.5]{CLS}, except that one has to replace all instances of the support of $\Sigma$ with $l$ and only consider cones which intersects $l$.
 \end{proof}
 
 \begin{remark}
 In \cite{KavehManon} Kaveh and Manon prove that   a toric vector bundle is nef (ample) if and only if $\E$ is what they call buildingwise (strictly) convex. This statement is similar to the above, although formulated in a different language.
 \end{remark}

\section{Vector bundles with non-globally generated symmetric powers} \label{counterexample}

We here describe a sequence of ample bundles $\E_k$ such that $S^k \E_k$ is not globally generated. This example was worked out jointly with Greg Smith (private correspondence).

Let $X_\Sigma$ be the smooth complete toric surface with rays 
\[ v_1= \begin{bmatrix} 1 \\ 0 \end{bmatrix},v_2=\begin{bmatrix} 1 \\ 1 \end{bmatrix}, v_3=\begin{bmatrix} 1 \\ 2 \end{bmatrix},
 v_4=\begin{bmatrix} 0 \\ 1 \end{bmatrix},v_5=\begin{bmatrix} -1 \\ 0 \end{bmatrix},v_6=\begin{bmatrix} 0 \\ -1 \end{bmatrix}.   \]
Let $\E$ be a rank $2$ toric vector bundle on $X_\Sigma$. Assume that for each ray, the filtration has both a one-dimensional and a two-dimensional vector space appearing. Let the integers for which the filtration jumps on ray $v_i$ be given by $a_i,b_i$, where $b_i>a_i$. Letting $p_i$ be the one-dimensional space appearing in ray $v_i$, we assume that $q:=p_1=p_2=p_3=p_4$ and that $q,p_5,p_6$ are pairwise distinct.

We have that $\E$ is ample if and only if the restriction to any invariant curve is ample. 
This is equivalent to the list of inequalities below. These can be derived as follows. An invariant curve $C$ corresponds to a ray of the fan. Restricted to the curve $\E|_C$ splits as a sum of line bundles $D_1$ and $D_2$. The numbers below are $C \cdot D_i$, for all $i,C$. Fix for instance the curve $C$ corresponding to $v_2$. The splitting type is determined by the restriction of $\E$ to the maximal cones $\sigma_{12} = \Cone(v_1,v_2)$ and  $\sigma_{23}=\Cone(v_2,v_3)$ containing $v_2$. We have that $\sigma_{12}$ corresponds to the characters $(b_1,b_2-b_1)$ and $(a_1,a_2-a_1)$ and $\sigma_{23}$ corresponds to $(2b_2-b_3,b_3-b_2)$  and $(2a_2-a_3,a_3-a_2)$. By \cite[Corollary 5.10]{HMP} the restriction to $C$ is determined by a unique pairing of the characters from $\sigma_{12}$ and $\sigma_{23}$, such that the differences of the characters are parallel to $v_2^\perp$. Doing this we obtain the first two inequalities in the list, the ten others correspond to the other five curves.
\begin{align*}
b_1+b_3-2b_2 &> 0 &
a_1+a_3-2a_2 &> 0 \\
b_2+b_4-b_3 &> 0 &
a_2+a_4-a_3 &> 0 \\
b_3+a_5-2b_4 &> 0 &
a_3+b_5-2a_4 &> 0 \\
b_4+b_6 &> 0 &
a_4+a_6 &> 0 \\
b_5+b_1 &> 0 &
a_5+a_1 &> 0 \\
b_6+a_2-a_1 &> 0 &
a_6+b_2-b_1 &> 0.
\end{align*}

Let the bundle $\E_k, k \geq 1$ be defined by
\begin{align*}
 a_1&=1 & b_1&=4 \\ 
 a_2&=-2 & b_2&=6k-1 \\ 
 a_3&=3k-6 & b_3&=12k-5 \\ 
 a_4&=6k-5 & b_4&=6k-3 \\
 a_5&=0 & b_5&=9k-3 \\
 a_6&=6-6k & b_6&=4. \\
\end{align*}
We can check that the ampleness inequalities are satisfied for $\E_k$. However we can show that $S^k \E_k$ is not globally generated:

The characters on the cone $\sigma_{2,3}$ are given by $u_1=(2a_2-a_3,a_3-a_2),u_2=(2b_2-b_3,b_3-b_2)$. 
On $\sigma_{23}$ we need to choose a matroid vector corresponding to the character $u=u_1+(k-1)u_2$. This will be of the form $q^{k-1}p_5$ or $q^{k-1}p_6$. In the first case we need to have
\[ k a_6 \geq a_2-a_3 + (k-1)(b_2-b_3) \]
while in the second case we need to have
\[ k a_5 \geq a_3-2a_2 + (k-1)(b_3-2b_2) \]
Inserting the chosen values and cleaning up we see that these inequalities are
\[ 0 \geq k \]
\[ 0 \geq 1 .\]
Both of these are clearly false, hence $S^k \E_k$ cannot be globally generated.

In conclusion, this example shows that on a toric variety $X$ there cannot exist a number $k$ such that for any ample toric vector bundle $\E$, we have that $S^k \E$ is always globally generated, not even for rank two bundles on a toric surface.

\section{Pullbacks under multiplication maps} \label{posSection:multiplication}
For a toric variety $X_\Sigma$ there is, for any positive integer $k$, a toric morphism $f_k: X_\Sigma \to X_\Sigma$ called  ``multiplication by $k$'' or  the ``toric Frobenius map''.  It is given by the map of lattices $N \to N$ which multiplies any element by $k$. For a toric vector bundle $\E$ on $X_\Sigma$ we thus have bundles $f_k^\ast \E$, for any positive integer $k$. 

The interest in the multiplication maps come from the fact that for a line bundle $\Li$, the pullback $f_k^\ast \Li$ is ``more positive'' than $\Li$, since  $f_k^\ast \Li= \Li^{\otimes k}$. This is similar to the Frobenius map for varieties in characteristic $p$, which was used by Deligne-Illusie-Reynard to prove the Kodaira vanishing theorem \cite{DeligneIllusie}. The toric Frobenius has been used to great effect in proving vanishing theorems on toric varieties \cite{Fujino}, \cite[Chapter 9]{CLS}. Thus, one would expect that applying $f_k^\ast$ to a toric vector bundle $\E$ will only ``increase the positivity'' of $\E$. However, here we show that such analogous statements are not true for several positivity properties of toric vector bundles.

We now fix $k$ and set $\F = f_k^\ast \E$. By \cite[Proposition 6.2.7]{CLS} the pullback $f_k^\ast D$ is just $kD$, multiplication by $k$. Since $\E$ locally is a sum of divisors this shows that the Klyachko filtrations of $\F$ is given by $E^i(j) =F^i(jk)$. Thus the vector spaces in the filtrations are the same, in particular $M(\E) = M(\F)$. Also for $v \in M(\F)$ we have that the associated divisor $D_v = kE_v$, where $E_v$ is the divisor associated to $v \in M(\E)$.

\begin{lemma}
For any $k,l$ we have that $S^l f_k^\ast \E \simeq f_k^\ast S^l \E$.
\end{lemma}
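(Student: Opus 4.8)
The plan is to prove the isomorphism $S^l f_k^\ast \E \simeq f_k^\ast S^l \E$ by working through Klyachko's equivalence of categories, since both sides are toric vector bundles and it suffices to identify their combinatorial data. Since $f_k$ is a toric morphism, the pullback $f_k^\ast$ is a functor on toric vector bundles, and the key computation already recorded just before the lemma is that pullback acts on the filtrations by reindexing: if $\E$ has filtrations $E^\rho(j)$, then $f_k^\ast \E$ has filtrations $(f_k^\ast E)^\rho(j) = E^\rho(\lceil j/k \rceil)$, or equivalently the jump at height $m$ for $\E$ becomes a jump at height $km$ for $f_k^\ast \E$ (the excerpt phrases this as $E^i(j) = F^i(jk)$). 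The underlying vector space at the identity is unchanged under $f_k^\ast$, since $f_k$ fixes the identity of the torus.

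First I would note that the two functors $S^l(-)$ and $f_k^\ast(-)$ are both determined, via Klyachko's theorem, by what they do to the filtered vector space $E$ with its filtrations $\{E^\rho(j)\}$. So the strategy is to compute the filtered vector space attached to each side of the claimed isomorphism and check they agree. On the one hand, by \cref{prop:filsympower} the symmetric power $S^l \E$ has filtrations
\[ (S^l E)^\rho(j) = \sum_{j_1+\cdots+j_l = j} \mathrm{Im}\big(E^\rho(j_1) \otimes \cdots \otimes E^\rho(j_l) \to S^l E\big), \]
and then pulling back under $f_k$ rescales the index, giving
\[ \big(f_k^\ast S^l E\big)^\rho(j) = \sum_{j_1+\cdots+j_l = \lceil j/k \rceil} \mathrm{Im}\big(E^\rho(j_1) \otimes \cdots \otimes E^\rho(j_l) \to S^l E\big). \]
On the other hand, $f_k^\ast \E$ has filtrations $(f_k^\ast E)^\rho(m) = E^\rho(\lceil m/k\rceil)$, and applying \cref{prop:filsympower} again to the bundle $f_k^\ast \E$ yields
\[ \big(S^l f_k^\ast E\big)^\rho(j) = \sum_{m_1+\cdots+m_l = j} \mathrm{Im}\big((f_k^\ast E)^\rho(m_1) \otimes \cdots \otimes (f_k^\ast E)^\rho(m_l) \to S^l E\big). \]

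The heart of the proof is then to check that these two expressions agree. After substituting $(f_k^\ast E)^\rho(m_i) = E^\rho(\lceil m_i/k\rceil)$ into the last display, one must verify that summing over $m_1+\cdots+m_l = j$ and then passing to the ceilings produces exactly the same family of subspaces as summing over $j_1+\cdots+j_l = \lceil j/k\rceil$. I expect this indexing bookkeeping to be the main obstacle: one inclusion is easy, but for the reverse one needs to see that any tuple $(j_1,\dots,j_l)$ with $\sum j_i = \lceil j/k\rceil$ can be realized by some tuple $(m_1,\dots,m_l)$ with $\sum m_i = j$ and $\lceil m_i/k\rceil = j_i$, which follows from the elementary fact that $\lceil \cdot /k\rceil$ is a surjective, order-preserving map whose fibers are intervals of length $k$, together with the fact that intermediate values of the index only enlarge the filtration members monotonically so no new images appear. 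The cleanest way to organize this is probably to first handle the tensor product case using \cref{prop:filtensorprod} (where $f_k^\ast(\E_1 \otimes \cdots \otimes \E_l) \simeq f_k^\ast\E_1 \otimes \cdots \otimes f_k^\ast\E_l$ is immediate from the reindexing and the Minkowski-sum structure of the tensor filtration), and then deduce the symmetric power statement by observing that $S^l$ is a quotient of the $l$-fold tensor power and that $f_k^\ast$, being exact and compatible with the symmetric-group action, commutes with passing to that quotient. Once the filtered vector spaces are shown to coincide, Klyachko's equivalence of categories immediately gives the desired isomorphism of toric vector bundles.
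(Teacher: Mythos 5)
Your proposal is correct and takes essentially the same route as the paper's proof: compute the Klyachko filtrations of both sides ray by ray, using the pullback reindexing $(f_k^\ast E)^\rho(j)=E^\rho(\lceil j/k\rceil)$ together with the symmetric-power filtration formula, and observe that they coincide. The only difference is that you spell out (correctly) the index bookkeeping with ceilings and the monotonicity of the filtration, which the paper's one-line proof leaves implicit in the phrase ``writing out the filtrations for any ray, we see that they are identical.''
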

\begin{proof}
By the above we know the Klyachko filtrations of $f_k^\ast \E$. We also know the Klyachko filtrations of a symmetric power. Writing out the filtrations for any ray, we see that they are identical.
\end{proof}

\begin{proposition} \label{theorem:multiplication} 
For any positive integer $k$ we have that $f_k^\ast \E$ is globally generated, nef, big or ample   if and only if $\E$ is globally generated, nef, big or ample, respectively.
\end{proposition}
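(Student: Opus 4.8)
The plan is to treat each of the four properties using the corresponding characterization recalled earlier, and in every case to exploit the two structural facts already established: that $f_k^\ast$ leaves the Klyachko subspaces (hence the matroids) unchanged while multiplying every associated divisor by $k$, and that $S^l f_k^\ast \E \simeq f_k^\ast S^l \E$. The unifying principle is that passing from $\E$ to $\F = f_k^\ast \E$ dilates every polytope $P_v$ by the factor $k$ and scales every support function by $k$, without altering any combinatorial (matroid or fan) data; each positivity criterion is then manifestly invariant under such a uniform positive dilation.

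For global generation I would invoke \cref{theorem:gg}. Fix a maximal cone $\sigma$; if $\E|_{U_\sigma} = \bigoplus_i \OO(\ddiv(u_i))$ then $\F|_{U_\sigma} = \bigoplus_i \OO(\ddiv(ku_i))$, since $f_k$ acts on characters by multiplication by $k$. As $M(\F) = M(\E)$ and $P_e^{\F} = k\, P_e^{\E}$, the condition ``$ku_i \in P_{e_i}^{\F}$'' is equivalent to ``$u_i \in P_{e_i}^{\E}$'', and the linear independence requirement on $e_1,\ldots,e_r$ is literally the same for both bundles. Hence the criterion holds for $\F$ exactly when it holds for $\E$.

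For nef and ample I would use \cref{HMPample}, reducing to invariant curves. The toric Frobenius restricts on each invariant curve $C \simeq \p^1$ to the degree-$k$ map $f_k|_C$, so if $\E|_C = \bigoplus_j \OO_{\p^1}(a_j)$ then $\F|_C = (f_k|_C)^\ast(\E|_C) = \bigoplus_j \OO_{\p^1}(ka_j)$; equivalently, in the splitting-data language the character differences $u_{1k}-u_{2l} = a_j\rho_\tau$ are replaced by $k a_j \rho_\tau$. Thus $\F|_C$ is ample (nef) iff $\E|_C$ is, for every $C$. Alternatively, and perhaps more transparently, one notes that $\F$ has the same branched cover as $\E$ with support function $\Psi_\F = k\,\Psi_\E$, so (strict) concavity is preserved and \cref{proposition:branchedcovercriterion} gives the claim directly.

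For bigness I would combine \cref{theorem:bignessCriteria} with the lemma $S^l f_k^\ast \E \simeq f_k^\ast S^l \E$. This gives $M(S^l \F) = M(S^l \E)$ for every $l$, with corresponding polytopes related by $P_v^{S^l \F} = k\, P_v^{S^l \E}$. A polytope is full dimensional if and only if its dilation by $k$ is, so there exist $l>0$ and $v \in M(S^l \F)$ with $P_v$ full dimensional precisely when the same holds for $\E$; hence $\F$ is big iff $\E$ is. The only point requiring genuine care is the nef/ample case, namely the identification of $f_k|_C$ as a degree-$k$ cover (or, in the support-function picture, the invariance of the branched cover under the character rescaling), but this is a standard toric computation; the remaining three cases are immediate once the dilation-by-$k$ principle is in hand.
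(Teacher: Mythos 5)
Your proposal is correct and follows essentially the same route as the paper's own proof: global generation via \cref{theorem:gg} with $M(f_k^\ast\E)=M(\E)$ and $P_{w}=kP_{v}$, nef/ample via restriction to invariant curves where $f_k^\ast\E|_C=\bigoplus_j\OO_{\p^1}(ka_j)$, and bigness via \cref{theorem:bignessCriteria} together with $S^l f_k^\ast\E\simeq f_k^\ast S^l\E$. If anything, you are slightly more explicit than the paper in the bigness case about why the symmetric-power lemma is needed (the criterion quantifies over all $S^l$), which is a welcome clarification rather than a deviation.
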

\begin{proof}
For nef (resp. ample) the argument is easy: $\E$ is nef (resp. ample) if and only if for each T-invariant curve $C$, $\E|_C$ is nef (resp. ample). Now $\E|_C$ is a sum of line bundles $D_i$ and $f_k^\ast \E|_C$ is the sum of $kD_i$. Since $D_i$ is nef (ample) if and only if $kD_i$ is nef (resp. ample), the result follows.

The statement on bigness follows directly from  \cref{theorem:bignessCriteria}. Since the polytopes for $f_k^\ast \E$ are simply $k$ times the polytopes for $\E$ we see that  a polytope in  the parliament of one of the bundles being full-dimensional is equivalent to the corresponding polytope for the other bundle being full-dimensional.

For global generation we use the criterion in Theorem \ref{theorem:gg}. $\E$ is globally generated if and only if for any maximal cone $\sigma$, with $\E|_{U_\sigma} \simeq \oplus_{i=1}^r \OO(\ddiv(u_i))$ there exists a basis $v_1,\ldots,v_r \in M(\E)$ such that $u_i \in P_{v_i}$. Similarly $f_k^\ast \E$ is globally generated if and only if there exists a basis $w_1,\ldots,w_r \in M(f_k^\ast \E)$ such that $ku_i \in P_{w_i}$. Now since $D_{w_i} = kD_{v_i}$, $P_{w_i} = kP_{v_i}$ and $M(\E) = M(f_k^\ast \E)$ we see that the two criteria above imply each other.

\end{proof}
\begin{remark}
The end of the above proof shows that the corresponding equivalence can fail for some other notions of positivity: in particular \cite[Theorem 6.2]{DJS} implies that if $\E$ separates $1$-jets then $f_k^\ast \E$ separates $k$-jets, but clearly the converse is not true. This is because separating $k$-jets correspond to certain edges having lattice length at least $k$. Thus any toric vector bundle $\E$ where one of the relevant edges has length $1$ does not separate $k$-jets for any $k \geq 2$, however $f_k^\ast \E$ will separate $k$-jets.
\end{remark}

\begin{example} \label{example:noggbound}
Fix an ample toric vector bundle $\E$ on $X_\Sigma$ which  is not globally generated or very ample, for instance the examples in \cref{counterexample}. Then $\E|_C$ is a vector bundle $\sum \OO(a_i)$ on $C \simeq \p^1$ for any invariant curve $C \subset X_\Sigma$ with $a_i >0$. Now $\F=f_k^\ast \E$ will satisfy $\F|_C = \sum \OO(ka_i)$. In particular, the restriction to any invariant curve is greater than or equal to $k$, which can be picked arbitrarily large. However by Theorem \ref{theorem:multiplication}, $\F$ is still not globally generated or very ample. Thus there cannot in general exist any bound depending on dimension, or even the fan,  guaranteeing that if each summand in $\E|_C$ is more positive than this bound, then $\E$ is globally generated or very ample. For line bundles this exists, depending only on dimension, by the statements implying Fujita's conjecture \cite{PayneFujita}.
\end{example}

The above examples suggests that the multiplication maps might be less useful for studying toric vector bundles, compared to their usefulness for line bundles: Many of the vanishing theorems in toric geometry follow from the fact that for a line bundle one obtains injections $H^i(X_\Sigma,\OO(D)) \subset H^i(X_\Sigma,\OO(f_k^\ast D))=H^i(X_\Sigma,\OO(k D))$ \cite{Fujino}. If $D$ is ample then $f_k^\ast D$ is very ample for large $k$. For toric vector bundles, the analogous statement does not hold, thus it is not clear if one can use  this technique to get vanishing theorems for positive toric vector bundles.

\begin{example} \label{example:highercoh}
Let $\E = T_{\p^n}(-1)$ and let $\F = f_{n+1}^\ast \E$. From the Euler sequence one obtains the two exact sequences
\[ 0 \to \OO(-1) \to \OO^{n+1} \to \E \to 0, \]
\[ 0 \to \OO(-n-1) \to \OO^{n+1} \to \F \to 0. \]
The second sequence is obtained by pulling back the Euler exact sequence along $f_{n+1}^\ast$; pullback is exact for vector bundles. 
We see that all higher cohomology of $\E$ vanishes, however the same is not the case for $\F = f_{n+1}^\ast \E$.

Letting $\G = (f_{n+1}^\ast T_{\p^n})(-n)$, we have the exact sequence
\[ 0 \to \OO(-n) \to \OO(1)^{n+1} \to \G \to 0. \]
In particular $\G$ is very ample and $H^i(\p^n,\G) = 0$ for $i>0$. However, the higher cohomology of $f_k^\ast \G$ is nonzero for $k \geq 2$. We note that we also have that $f_k^\ast \G$, $k \geq 2$ is a very ample toric vector bundle such that all polytopes in the parliament are very ample, but with non-vanishing higher cohomology.
\end{example}

\begin{remark}
Let $\E$ be a toric vector bundle and let $\F = f_k^\ast \E$. Then, since restricted to $U_\sigma$ the bundle splits as a direct sum, we see that $c_i(\F) = k^i c_i(\E)$. This implies that for any weighted degree $n$ polynomial $P$ in the Chern classes, where $c_i$ has weight $i$, we have that $P(c_1(\F),c_2(\F),\ldots,c_r(\F)) = k^n P(c_1(\E),c_2(\E),\ldots,c_r(\E))$. Then \cref{example:highercoh} implies that there cannot exist any homogeneous polynomial in the Chern classes of $\E$ such that if this polynomial is larger than some constant, all higher cohomology of very ample bundles vanishes. 
\end{remark}

\bibliography{ref}
\bibliographystyle{alpha}
\end{document}